\newcommand*\circled[1]{\tikz[baseline=(char.base)]{\node[shape=circle,draw,inner sep=2pt] (char) {#1};}}
\newtheorem{theo}{Theorem}[section]
\newtheorem{prop}[theo]{Proposition}
\newtheorem{lemm}[theo]{Lemma}
\newtheorem{rema}[theo]{Remark}
\newtheorem{conj}[theo]{Conjecture}
\newtheorem{example}[theo]{Example}
\title{\bf On the Shifted Littlewood-Richardson Coefficients and the Littlewood-Richardson Coefficients}
\author{Duc Khanh Nguyen}
\date{}
\newfont{\gothic}{eufb10}
\begin{document}
\maketitle
\begin{abstract} 
We give a new interpretation of the shifted Littlewood-Richardson coefficients $f_{\lambda\mu}^\nu$ ($\lambda,\mu,\nu$ are strict partitions). The coefficients $g_{\lambda\mu}$ which appear in the decomposition of Schur $Q$-function $Q_\lambda$ into the sum of Schur functions $Q_\lambda = 2^{l(\lambda)}\sum_{\mu}g_{\lambda\mu}s_\mu$ can be considered as a special case of $f_{\lambda\mu}^\nu$ (here $\lambda$ is a strict partition of length $l(\lambda)$). We also give another description for $g_{\lambda\mu}$ as the cardinal of a subset of a set that counts Littlewood-Richardson coefficients $c_{\mu^t\mu}^{\tilde{\lambda}}$. This new point of view allows us to establish connections between $g_{\lambda\mu}$ and $c_{\mu^t \mu}^{\tilde{\lambda}}$. More precisely, we prove that $g_{\lambda\mu}=g_{\lambda\mu^t}$, and $g_{\lambda\mu} \leq c_{\mu^t\mu}^{\tilde{\lambda}}$. We conjecture that $g_{\lambda\mu}^2 \leq c^{\tilde{\lambda}}_{\mu^t\mu}$ and formulate some conjectures on our combinatorial models which would imply this inequality if it is valid.    
\end{abstract}
\textit{\\2020 Mathematics Subject Classification.} Primary 05E10, Secondary 05E05; 14M15.\\ 
\textit{Key words and phrases.} Young tableaux; Schur functions; Littlewood-Richardson coefficients; tableau switching; Grassmannians; Schubert varieties; shifted tableaux; Schur $Q$-functions; shifted Littlewood-Richardson coefficients; Lagrangian Grassmannians.

\tableofcontents

\section{Introduction}
Let $\lambda,\mu,\nu$ be partitions. Let $l(\lambda)$ be the length of $\lambda$, and $s_\lambda$ be the Schur function associated to the partition $\lambda$. The Littlewood-Richardson coefficients $c_{\lambda \mu}^{\nu}$ appear in the expansion (see \cite{Fulton}) 
\begin{equation*}
s_\lambda s_\mu =\sum\limits_{\nu} c_{\lambda\mu}^\nu s_\nu
.\end{equation*}
If now $\lambda,\mu,\nu$ are strict partitions, let $Q_\lambda$ be the shifted Schur $Q$-function associated to the strict partition $\lambda$. The shifted Littlewood-Richardson coefficients appear in the expansion (see \cite{Stembridge})
\begin{equation*}
Q_\lambda Q_\mu =\sum\limits_{\nu} 2^{l(\lambda)+l(\mu)-l(\nu)}f_{\lambda\mu}^\nu Q_\nu.
\end{equation*} 
For any strict partition $\lambda$, and a partition $\mu$ of the same integer, the coefficients $g_{\lambda\mu}$ appear in the decomposition (see \cite{Stembridge})
\begin{equation*}
Q_\lambda = 2^{l(\lambda)}\sum\limits_{\mu}g_{\lambda\mu}s_\mu.
\end{equation*}
The coefficients $g_{\lambda\mu}$ can be considered as shifted Littlewood-Richardson coefficients by the identity (see \cite{Stembridge})
\begin{equation*}\label{gasf} 
g_{\lambda\mu} = f_{\lambda\delta}^{\mu+\delta},
\end{equation*}
where $\delta=(l,l-1,\dots,1)$ with $l=l(\mu)$.\\

There were several developments beyond the Littlewood-Richardson rule. For example,
\begin{itemize}
\item[-]Zelevinsky \cite{Zelevinsky} expressed the coefficients $c_{\lambda\mu}^\nu$ as the number of pictures between $\mu$ and $\nu/\lambda$.  
\item[-]Remmel and Whitney \cite{RemmelWhitney} described $c_{\lambda\mu}^\nu$ as the number of standard tableaux of shape $\lambda$ constructed by some rules from the reverse filling of the skew shape $\nu/\mu$. There is also a similar version by Chen, Garsia, Remmel \cite{ChenGarsiaRemmel}  but they replaced $\lambda$ with $\nu$ and $\nu/\mu$ with $\lambda*\mu$.
\item[-]White \cite{White} showed that the set of tableaux in the construction of Remmel and Whitney \cite{RemmelWhitney} can be understood from a different point of view. It arises from Robinson-Schensted insertion of reading words of column-strict tableaux of a fixed skew shape.
\end{itemize} 
There are new approaches that come from geometry: the algorithm by Liu \cite{Liu} and the rule of Vakil \cite{RaviVakil}, etc.\\

\noindent The theory and methods for shifted Littlewood-Richardson coefficients are also developed parallelly with the theory of Littlewood-Richardson coefficients. Based on the work of Worley \cite{Worley}, Sagan \cite{Sagan}, Stembridge \cite{Stembridge}, there are several versions of the shifted Littlewood-Richardson rule for $f_{\lambda\mu}^\nu$, for example, the works of Serrano \cite{Serrano}, Cho \cite{cho2013new}, Shimozono \cite{Shimozono} and so on.   The shifted Littlewood-Richardson rule given by Stembridge \cite{Stembridge} is also re-obtained using the theory of crystal bases for the quantum queer superalgebra (see \cite{Choi18}, \cite{Grantcharov}). In \cite{Choi14}, the authors established the bijections between three models for shifted Littlewood-Richardson coefficients in \cite{Grantcharov}, \cite{Stembridge}, \cite{Serrano}.\\

In this paper, instead of re-interpreting the shifted Littlewood-Richardson rule given by Stembridge \cite{Stembridge}, we use this rule to obtain a new combinatorial model for the coefficients $f_{\lambda\mu}^\nu$ and $g_{\lambda\mu}$. The advantage of the new results allows us to compute the coefficients easier and to realize the connections with Littlewood-Richardson coefficients. The motivation of our work comes from the work of Belkale, Kumar and Ressayre \cite{BelkaleKumarRessayre}. The main results in the paper \cite{BelkaleKumarRessayre} raised some first clues about relations between shifted Littlewood-Richardson coefficients with Littlewood-Richardson coefficients. Ressayre conjectures an inequality between them in \cite{Ressayre19}. We do not use the approach from geometry as in \cite{BelkaleKumarRessayre}, but we try to develop the combinatorial model of Stembridge \cite{Stembridge} to discover the bridge between coefficients. To be more precise, we describe the results as follows. \\

\noindent Our first result, Theorem \ref{newf} is a new combinatorial model for the shifted Littlewood-Richardson coefficients. This is analogous to Remmel and Whitney's work \cite{RemmelWhitney}. The combinatorial model proposed by Shimozono in \cite{Shimozono} is analogous to White's model \cite{White}, arising from Sagan's shifted insertion \cite{Sagan}. Despite the case of Littlewood-Richardson coefficients where Remmel and Whitney's construction is identified with White's construction, our construction and Shimozono's construction do not produce the same model. Since $g_{\lambda\mu}$ can be considered as a shifted Littlewood-Richardson coefficient, we obtain a new model for $g_{\lambda\mu}$ in Theorem \ref{newg}.\\

\noindent Our second result, Theorem \ref{gTtT} is also a new combinatorial interpretation of the coefficients $g_{\lambda\mu}$. More precisely, let $\tilde{\lambda}$ be the partition such that its Young diagram is the union of shifted diagram corresponding to $\lambda$ and its reflection through the main diagonal. Let $\mu^t$ be the conjugate partition of $\mu$. We prove that $g_{\lambda\mu}$ is the cardinal of a subset of a set that counts the coefficients $c_{\mu^t\mu}^{\tilde{\lambda}}$. This implies Theorem \ref{glessthanc} that \begin{equation*}
g_{\lambda\mu} \leq c^{\tilde{\lambda}}_{\mu^t\mu}.
\end{equation*}  
We conjecture a stronger inequality (see Conjecture \ref{conjectureinequal2}) 
\begin{equation*}\label{inequal2}
g_{\lambda\mu}^2 \leq c^{\tilde{\lambda}}_{\mu^t\mu}.
\end{equation*}
Using a computer program, we checked this conjecture on many examples. Based on our combinatorial model for the coefficients $g_{\lambda\mu}$, we formulate Conjecture \ref{bij} whose validity implies Conjecture \ref{conjectureinequal2}. Evidence for Conjecture \ref{bij} is that it implies easily the equality
\begin{equation*}\label{equalgg}
    g_{\lambda\mu}=g_{\lambda\mu^t}.
\end{equation*}
The equality might be well known among experts, nevertheless, we include a geometric proof in Proposition \ref{gmumut}.\\

The paper contains four sections. In the first section,  we collect some basic background about the theory of Young tableaux and related models for Littlewood-Richardson coefficients. In the second section, we present the theory of shifted tableaux, and related models, some interpretations for shifted Littlewood-Richardson coefficients. The last two sections present our two main results on the coefficients $f_{\lambda\mu}^\nu$ and $g_{\lambda\mu}$. 
 
\section{Littlewood-Richardson Coefficients}
In this section, we present Young tableaux and related models for Littlewood-Richardson coefficients.
\subsection{Young Tableaux}

A non-negative integer sequence $\lambda = (\lambda_1, \lambda_2, \dots)$ is called a {\em partition} if $\lambda_1 \geq \lambda_2 \geq \dots$. If $\lambda=(\lambda_1,\lambda_2,\dots,\lambda_l)$ with $\lambda_l >0$ and $\sum\limits_{i=1}^l \lambda_i = n$, we write $l(\lambda)=l$, $|\lambda| = n$.\\

Each partition $\lambda$ is presented by a {\em Young diagram $Y(\lambda)$} that is a collection of boxes such that:
\begin{itemize}
\item[](D1) The leftmost boxes of each row are in a column.
\item[](D2) The numbers of boxes from top row to bottom row are $\lambda_1,\lambda_2,\dots$, respectively. 
\end{itemize}

\begin{example}
$$Y((3,2))=\begin{ytableau}
\,&\,&\,\\
\,&\,
\end{ytableau}$$
\end{example}

The reflection $\sigma(Y)$ through the main diagonal of a Young diagram $Y$ is also a Young diagram. The {\em conjugate partition} $\lambda^t$ of $\lambda$ is defined by $Y(\lambda^t)=\sigma(Y(\lambda))$. \\
 
A {\em semistandard Young tableau} of shape $\lambda$ is a filling of the Young diagram $Y(\lambda)$ by the ordered alphabet $\{1<2<\dots\}$ such that:  
\begin{itemize}
\item[](Y1) The entries in each column are strictly increasing from top to bottom.
\item[](Y2) The entries in each row are weakly increasing from left to right. 
\end{itemize}

Let $\nu =(\nu_1,\nu_2,\dots)$ and $\mu=(\mu_1,\mu_2,\dots)$ be two partitions. We say that $\nu$ is bigger than $\mu$ if and only if $\nu_i \geq \mu_i$ for all $i$, and we write $\nu \geq \mu$. In this case, we define the {\em skew Young diagram} $Y(\nu/ \mu)$ as the result of removing boxes in the Young diagram $Y(\mu)$ from the Young diagram $Y(\nu)$. We write $|\nu/\mu|=|\nu|-|\mu|$. A {\em skew Young tableau} $T$ of skew shape $\nu/\mu$ is a result of filling the skew Young diagram $Y(\nu/\mu)$ by the ordered alphabet $\{1<2<\dots\}$ satisfying the rules (Y1) and (Y2).\\

The {\em word} $w(T)$ of a Young tableau $T$ is defined to be the sequence obtained by reading the rows of $T$ from left to right, starting from bottom to top. A Young tableau of skew shape $\nu/\mu$ is said to be a {\em standard skew Young tableau} if its word is a permutation of the word $12\dots |\nu/\mu|$. The {\em transpose} of a standard skew Young tableau $T$ is also a standard skew Young tableau and it is denoted by $T^t$. 

\subsection{Row-Insertion and Product Tableau}
For a Young tableau $T$ and a positive integer $x$, we recall {\em row-insertion} $x$ to $T$ in \cite{Fulton}:
\begin{itemize}
\item[1.]We start with the first row of $T$. Set INSERT $:=x$. 
\item[2.]In the row we are considering, if no entry is bigger than INSERT, then at the end of this row, we add a new box with entry INSERT.
\item[3.]If in step 2., there exists an entry that is bigger than INSERT, then find the leftmost entry in the row we are considering satisfying the condition. Put the value INSERT holding in the box of this entry and set the value of the entry we have found as the new value of INSERT.  
\item[4.] Repeat processes 2. and 3. with the new INSERT and the next row. The process will finish when a new box with some entries is added at the end of a row of $T$ or in the row below the bottom of $T$.    
\end{itemize}
The result of row-insertion $x$ to $T$ is a Young tableau, which is denoted by $T \leftarrow x$. \\

\begin{example}
$$
\begin{ytableau}
1&2&2\\
3&4
\end{ytableau} \leftarrow 1 = \begin{ytableau}
1&1&2\\
2&4\\
3
\end{ytableau}
$$
\end{example}

For two Young tableaux $T$ and $U$, the {\em product tableau} $T.U$ is defined by 
\begin{equation*}
T.U:= (\dots((T\leftarrow x_1)\leftarrow x_2)\leftarrow \dots  \leftarrow x_{n-1})\leftarrow x_n,
\end{equation*} 
where $w(U) =x_1 x_2\dots x_n$.

\begin{example} Let
$$T= \begin{ytableau}
       1&2&2\\
       3&4
\end{ytableau} \quad \text{ and } \quad U= \begin{ytableau}
       1&2&3\\
       2&4
\end{ytableau} $$
then $w(U)=24123$ and 
$$
T.U=
 \begin{ytableau}
       1&1&2&2&2&3\\
       2&4&4\\
       3
\end{ytableau}
$$
\end{example}

\subsection{Sliding and Jeu de Taquin}
For the skew Young diagram $Y(\nu/\mu)$, an {\em inner corner} of $Y(\nu/\mu)$ is a box in the Young diagram $Y(\mu)$ such that the boxes below and to the right are not in $Y(\mu)$. An {\em outside corner} is a box in the Young diagram $Y(\nu)$ such that the boxes below and to the right are not in  $Y(\nu)$.\\

Let $T$ be a skew Young tableau of skew shape $\nu/\mu$. Let $b$ be an inner corner of $\nu/\mu$. We recall {\em sliding} $b$ out of $T$ from \cite{Fulton}:
\begin{itemize}
\item[1.]Set $\begin{ytableau}
       *(red)
\end{ytableau}  := b.$  
\item[2.]Do one of the following cases:
$$
 \begin{ytableau}
       *(red)&x\\
       y
\end{ytableau}\quad \longrightarrow \quad
 \begin{ytableau}
       x&*(red)\\
       y
\end{ytableau}  \quad \text{ if } x <y.
$$
$$
 \begin{ytableau}
       *(red)&x\\
       y
\end{ytableau}\quad \longrightarrow \quad
 \begin{ytableau}
       y&x\\
       *(red)
\end{ytableau} \quad \text{ if } y\leq x.
$$
$$
 \begin{ytableau}
       *(red)&x
\end{ytableau}\quad \longrightarrow \quad
 \begin{ytableau}
       x&*(red)
\end{ytableau}$$
$$
 \begin{ytableau}
       *(red)\\
       y
\end{ytableau}\quad \longrightarrow \quad
 \begin{ytableau}
       y\\
       *(red)
\end{ytableau}
$$
\item[3.]Repeat 2. until the box $\begin{ytableau}
       *(red)
\end{ytableau}$ becomes an outside corner.
\end{itemize}

The result of applying sliding $b$ out of $T$ gives us a new skew Young tableau $T'$ of skew shape $\nu'/\mu'$ such that $|\nu'| =|\nu|-1$, $|\mu'|=|\mu|-1$. Choose a random inner corner $b'$ of $T'$ and do sliding $b'$ out of $T'$ as before, we get a new skew Young tableau $T''$ of skew shape $\nu''/\mu''$ such that $|\nu''|=|\nu|-2$, $|\mu''|=|\mu|-2$. Repeating the process as many times as possible, we finally get a Young tableau and the process will terminate. There is a fact that the Young tableau we get does not depend on the choice of random inner corners in each step. The final tableau we have obtained is called the {\em rectification} of $T$ and it is denoted by $Rect(T)$. The whole process we apply on $T$ to get $Rect(T)$ is called the {\em jeu de taquin}.     

\begin{lemm}\label{wordrect}
Let $T$ and $U$ be skew Young tableaux. If $w(T)=w(U)$, then $Rect(T)=Rect(U)$.
\end{lemm}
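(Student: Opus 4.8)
The plan is to reduce the statement to a known fact about jeu de taquin, namely that rectification is well-defined (independent of the order of slides) and that it is invariant under the particular slide moves described above. First I would recall the standard fact, implicit in the description of $Rect$ given just before the lemma, that $Rect(T)$ depends only on the \emph{plactic class} of the word $w(T)$: each elementary slide move on a skew tableau either leaves the reading word unchanged or replaces it by a Knuth-equivalent word, and conversely the rectification of any skew tableau $T$ equals the unique semistandard tableau $P$ such that $w(T)$ is Knuth-equivalent to $w(P)$. Granting this, if $w(T) = w(U)$ then $w(T)$ and $w(U)$ lie in the same plactic class, hence $Rect(T)$ and $Rect(U)$ are the same semistandard tableau.

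Since the excerpt has not set up the plactic monoid explicitly, the cleaner route within the paper's own framework is to argue directly via the product tableau. The key observation is that for any skew tableau $T$, one has $Rect(T) = T_\emptyset . T$, where $T_\emptyset$ is the empty tableau, i.e. $Rect(T)$ is obtained by row-inserting the entries of $w(T)$ one at a time into the empty tableau. This identity is the standard compatibility between jeu de taquin and row insertion (Fulton's ``$Rect$ via insertion'' theorem); I would cite \cite{Fulton} for it. Once this is in hand, the lemma is immediate: the product tableau $T_\emptyset . T$ is, \emph{by definition} (see the definition of $T.U$ above), a function of the word $w(T)$ alone, so $w(T) = w(U)$ forces $T_\emptyset . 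T = T_\emptyset . U$, i.e. $Rect(T) = Rect(U)$.

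So the steps, in order, are: (1) state the identity $Rect(T) = T_\emptyset . T$ for every skew tableau $T$, attributing it to \cite{Fulton}; (2) observe that the right-hand side depends only on $w(T)$ by the definition of the product of tableaux; (3) conclude that $w(T) = w(U) \implies Rect(T) = Rect(U)$. The main obstacle is step (1): it is the one nontrivial input, since it packages the order-independence of jeu de taquin slides together with the comparison between slides and insertion. In a self-contained treatment one would prove it by showing that a single jeu de taquin slide does not change the rectification (again an appeal to the confluence of slides) and that the rectification of a one-row or one-column tableau agrees with successive row-insertions; but since this material is classical and the paper explicitly works over \cite{Fulton}, I would simply invoke it rather than reproving it. Everything else is a direct unwinding of the definitions already given.
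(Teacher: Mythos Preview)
Your argument is correct. The identity $Rect(T) = T_\emptyset . T$ (equivalently, that rectification coincides with row-inserting the reading word into the empty tableau) is exactly the right tool, and once granted, the lemma is immediate since the product tableau depends only on the word by definition.

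Note, however, that the paper does not actually prove this lemma: it is stated without proof and followed directly by an example, being treated as a standard fact from \cite{Fulton}. So there is no ``paper's own proof'' to compare against. Your proposal supplies the standard justification that the paper simply assumes; both routes you outline (via the plactic monoid, or via the insertion characterization of rectification) are the usual ones found in Fulton, and either would be an acceptable way to fill in the omitted proof.
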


\begin{example}
Let 
$$ T= 
 \begin{ytableau}
       *(yellow)&*(yellow)&*(yellow)&1\\
       1&2&2\\
       3&4
\end{ytableau}  
\quad \text{ and }\quad
U=
\begin{ytableau}
       *(yellow)&*(yellow)&*(yellow)&*(yellow)&1\\
       *(yellow)&1&2&2\\
       *(yellow)&4\\
       3
\end{ytableau} 
$$
The process of applying the jeu de taquin on $T$ can be visualized as follows:
$$
\begin{ytableau}
       *(yellow)&*(yellow)&*(red)&1\\
       1&2&2\\
       3&4
\end{ytableau} \quad 
\longrightarrow \quad
\begin{ytableau}
       *(yellow)&*(red)&1\\
       1&2&2\\
       3&4
\end{ytableau} 
\quad \longrightarrow \quad
\begin{ytableau}
       *(red)&1&2\\
       1&2\\
       3&4
\end{ytableau} 
\quad \longrightarrow \quad
\begin{ytableau}
       1&1&2\\
       2&4\\
       3
\end{ytableau} 
$$
where the boxes in red are chosen to be slid. Hence, 
$$
Rect(T)=
\begin{ytableau}
       1&1&2\\
       2&4\\
       3
\end{ytableau} 
$$
One can easily check that $Rect(U)=Rect(T)$.
\end{example}
Let $T$ and $U$ be Young tableaux. We denote by $T*U$ the new skew Young tableau which is defined as follows:
$$
{\hbox{\begin{tikzpicture}
\draw (0,0) --++ (0,-2.5) --++ (1,0) --++ (0,0.5) --++ (0.5,0) --++ (0,1) --++ (1,0) --++ (0,0.5) --++ (0.5,0) --++ (0,0.5) -- cycle
node[below=1.5cm, right=0.5cm]{$T$};
\draw[fill=yellow]  (0,-1) --++ (1.5,0) --++ (0,1) --++ (-1.5,0) -- cycle
node[above=0.5cm, right=1.9cm]{$U$};
\end{tikzpicture}}}
$$
We have another point of view about the product tableau $T.U$ as in the following lemma (see \cite{Fulton}).
\begin{lemm}\label{prodasstar}
Let $T$ and $U$ be Young tableaux. We have $T.U =Rect(T*U)$.
\end{lemm}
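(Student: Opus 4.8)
The plan is to push the identity down to the level of words, and then down to the level of single row-insertions. First I would read off the word of $T*U$ straight from its definition: traversing its rows from bottom to top lists all the rows of $T$ before all the rows of $U$, so that
\[
w(T*U)=w(T)\,w(U).
\]
On the other side, by definition $T.U=(\dots((T\leftarrow x_1)\leftarrow x_2)\leftarrow\dots\leftarrow x_n)$, where $w(U)=x_1x_2\cdots x_n$. By Lemma~\ref{wordrect}, $Rect(T*U)$ depends only on the word $w(T)\,w(U)$; hence it suffices to produce \emph{some} skew Young tableau with that word whose rectification is visibly $(\dots((T\leftarrow x_1)\leftarrow x_2)\leftarrow\dots\leftarrow x_n)$.

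The key building block is the one-box case: for a Young tableau $P$ and the tableau consisting of a single box with entry $x$, one has $Rect\big(P*\text{(that box)}\big)=P\leftarrow x$. I would prove this by running the jeu de taquin on $P*\text{(box }x)$ while always choosing the rightmost available inner corner, and checking that the trajectory of the sliding hole mirrors the row-bumping procedure step by step: the hole enters each row precisely at the position where $x$ (or the entry currently being bumped) would land, and the entry it pushes downward is exactly the bumped entry. This reduces to a finite check of the two sliding rules, organized as an induction on the number of rows of $P$, and it is the one genuinely computational ingredient.

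With the one-box case available, I would iterate it. Given $w(U)=x_1\cdots x_n$, form the skew tableau $\Omega$ consisting of $T$ in the lower-left together with an anti-diagonal staircase of $n$ single boxes placed far to the upper right, carrying $x_1,x_2,\dots,x_n$ and arranged so that reading $\Omega$ from bottom to top yields $w(T)\,x_1x_2\cdots x_n$; taking the horizontal offset large enough keeps $\Omega$ both semistandard and of genuine skew shape. Then $w(\Omega)=w(T)\,w(U)=w(T*U)$, so $Rect(\Omega)=Rect(T*U)$ by Lemma~\ref{wordrect}. Now rectify $\Omega$ by first sliding $x_1$ (the staircase box nearest $T$) into $T$, then $x_2$, and so on — which is legitimate because the rectification does not depend on the order in which inner corners are chosen. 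Each phase is an instance of the one-box case applied to the current (enlarged) tableau, so after phase $j$ the $T$-block has become $(\dots((T\leftarrow x_1)\leftarrow x_2)\leftarrow\dots\leftarrow x_j)$ while $x_{j+1},\dots,x_n$ still sit in the staircase in the correct relative position; after all $n$ phases nothing remains but $T.U$.

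The main obstacle I anticipate is twofold: carrying out the one-box computation cleanly (matching jeu de taquin to row bumping without getting lost in cases), and pinning down the inductive invariant for $\Omega$ precisely — that is, verifying that after the first $j$ absorptions the surviving staircase is still positioned so the one-box case applies verbatim to the grown tableau, and that every intermediate shape stays skew. Both are routine once the bookkeeping (notably the choice of offset in $\Omega$) is fixed; alternatively one may simply invoke that this is the classical identity of \cite{Fulton}, of which the argument sketched above is the standard proof.
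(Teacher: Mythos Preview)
Your sketch is correct and is precisely the classical argument: reduce to the one-box identity $Rect(P*\langle x\rangle)=P\leftarrow x$ by tracking the sliding hole against the bumping path, then iterate via a staircase whose word equals $w(T)w(U)$ and invoke Lemma~\ref{wordrect}. The paper itself does not prove this lemma at all---it simply records the statement and cites \cite{Fulton}---so your proposal actually goes further than the paper by outlining the proof rather than deferring to the reference; there is nothing to compare beyond noting that you and the paper agree on the source.
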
  

\subsection{The Robinson - Schensted - Knuth Correspondence}
A two-rowed array is defined by 
\begin{equation*}
w= \left( \begin{array}{ccc}
u_1&\dots& u_n\\
v_1& \dots& v_n
\end{array} \right),
\end{equation*} where $u_i$'s and $v_i$'s are in two independent alphabets. We say that $w$ is in {\em lexicographic order} if 
\begin{itemize}
\item[1.]$u_1 \leq u_2 \leq \dots \leq u_n$.
\item[2.] If $u_{k-1}=u_k$ for some $k$, then $v_{k-1} \leq v_k$.
\end{itemize}
The {\em Robinson - Schensted - Knuth correspondence} sets up a one-to-one correspondence between a two-rowed array in lexicographic order $w = \left( \begin{array}{ccc}
u_1 &\dots & u_n \\
v_1 &\dots & v_n
\end{array} \right)$ and a pair of tableaux of the same shape $\left( \begin{array}{c}Q\\P\end{array}  \right)$. It is described as follows (see \cite{Fulton}):\\

\underline{Create $P,Q$ from $w$:}
\begin{itemize}
\item[1.]Set $P_1 = v_1$ and $Q_1=u_1$.
\item[2.]Repeat the process below to create series
\begin{equation*}
\left( \begin{array}{c}
Q_1\\P_1
\end{array} \right), \dots, \left( \begin{array}{c}
Q_n\\P_n
\end{array}\right).
\end{equation*}
The process:
\begin{itemize}
\item[2.1.]Perform the row-insertion $v_{k+1}$ to $P_{k}$ to get $P_{k+1}$.
\item[2.2.]Let $b_{k+1}$ be the box in diagram of $P_{k+1}$ but not in the diagram of $P_{k}$. Add the box $b_{k+1}$ with entry $u_{k+1}$ into $Q_k$ to get $Q_{k+1}$. 
\end{itemize}
\item[3.] Set $Q=Q_n$ and $P=P_n$.
\end{itemize}

\underline{Create $w$ from $P,Q$:}
\begin{itemize}
\item[1.]Set $Q_n=Q$ and $P_n=P$, where $n$ is the number of entries of $P$ (or $Q$). 
\item[2.]Repeat the process below to create series
\begin{equation*}
\left( \begin{array}{c}
Q_n\\P_n
\end{array} \right), \dots, \left( \begin{array}{c}
Q_1\\P_1
\end{array}\right) \text{ and }\left( \begin{array}{c}
u_n\\v_n
\end{array}\right), \dots , \left( \begin{array}{c}
u_1\\v_1
\end{array} \right).
\end{equation*}
The process:
\begin{itemize}
\item[2.1.]Find the box farthest to the right and contain the largest entry in $Q_k$. Move out this box from $Q_k$ to get new tableau $Q_{k-1}$. Set the largest entry we have found to be $u_k$.
\item[2.2.]Perform the reverse row-insertion to $P_k$ with starting box is the box we have found. When the process finish, we get a new tableau $P_{k-1}$ and we set the entry have moved out from the first row of $P_k$ to be $v_k$.  
\end{itemize}
\item[3.] Set $w = \left( \begin{array}{ccc}
u_1 &\dots & u_n \\
v_1 &\dots & v_n
\end{array} \right)$.
\end{itemize}

We write 
\begin{equation*}
\left( \begin{array}{ccc}
u_1 &\dots & u_n \\
v_1 &\dots & v_n
\end{array} \right) \stackrel{\longleftrightarrow}{_{RSK}} \left( \begin{array}{c}Q\\P\end{array}  \right).
\end{equation*} 

\begin{example}
Let $w$ be the two-rowed array
\begin{equation*}
\left( \begin{array}{ccccccc}
1&2&3&4&5&6&7\\
2&3&6&7&4&5&1
\end{array} \right).
\end{equation*}
The series of $P_k$ and $Q_k$ are 
\begin{align*}
    P_1 &= \begin{ytableau} 2 \end{ytableau} &  Q_1 &= \begin{ytableau} 1 \end{ytableau}\\
    P_2 &= \begin{ytableau} 2&3 \end{ytableau} & Q_2 &= \begin{ytableau} 1&2 \end{ytableau}\\
    P_3 &= \begin{ytableau} 2&3&6 \end{ytableau} & Q_3 &= \begin{ytableau} 1&2&3 \end{ytableau} \\
    P_4 &= \begin{ytableau} 2&3&6&7 \end{ytableau} & Q_4 &= \begin{ytableau} 1&2&3&4 \end{ytableau} \\
    P_5 &= \begin{ytableau} 2&3&4&7\\6 \end{ytableau} & Q_5 &= \begin{ytableau} 1&2&3&4\\5 \end{ytableau} \\
    P_6 &= \begin{ytableau} 2&3&4&5\\6&7 \end{ytableau} & Q_6 &= \begin{ytableau} 1&2&3&4\\5&6 \end{ytableau} \\
    P = P_7 &= \begin{ytableau} 1&3&4&5\\2&7\\6 \end{ytableau} & Q= Q_7 &= \begin{ytableau} 1&2&3&4\\5&6\\7 \end{ytableau} \\
\end{align*}
\end{example}

\subsection{Littlewood-Richardson Rule}\label{LRsubsection}
A Young tableau $T$ is said to have {\em content} $\gamma=(\gamma_1,\gamma_2,\dots)$ if $\gamma_i$ is the number of entries $i$ in the tableau $T$. We write \begin{equation*}
x^T =x^\gamma = x_1^{\gamma_1}x_2^{\gamma_2}\dots.
\end{equation*}
For each partition $\lambda$, the {\em Schur function} $s_\lambda$ in variables $x_1,x_2,\dots$ is defined as the sum of $x^T$, where $T$ runs over the semistandard Young tableaux of shape $\lambda$. Let $\Lambda = \bigoplus\limits_{n\geq 0} \Lambda^n$ be the {\em graded ring of symmetric functions} in the variables $x_1,x_2,\dots$ with coefficients in $\mathbb{Z}$. The following set is a $\mathbb{Z}$-basis of $\Lambda^n$

$$
\left\lbrace s_\lambda \mid \lambda \text{ is a partition of }n \right\rbrace.
$$
The integers $c_{\lambda\mu}^\nu$ for each partitions $\lambda,\mu,\nu$ defined by 
\begin{equation*}
s_\lambda s_\mu =\sum\limits_{\nu} c_{\lambda\mu}^\nu s_\nu,
\end{equation*}
are called {\em Littlewood-Richardson coefficients}.\\

For any Young tableaux $V_0$ of shape $\nu$, let $\mathcal{T}(\lambda,\mu,V_0)$ be the set 
\begin{equation*}
\left\lbrace (A,U) \mid \text{$A,U$ are Young tableaux of shapes $\lambda,\mu$, respectively and $A.U =V_0$}  \right\rbrace.
\end{equation*}
For any tableau $U_0$ of shape $\mu$, let $\mathcal{S}(\nu/\lambda,U_0)$ be the set 
\begin{equation*}
\left\lbrace \text{Skew tableaux $S$ of skew shape $\nu/\lambda$ such that $Rect(S)=U_0$}  \right\rbrace.
\end{equation*}
We can describe a canonical one-to-one correspondence between $\mathcal{T}(\lambda,\mu,V_0)$ and $\mathcal{S}(\nu/\lambda,U_0)$ as follows:
\begin{itemize}
\item[1.]Let $(A,U)$ be an element of the set $\mathcal{T}(\lambda,\mu,V_0)$. Suppose that 
\begin{equation*}
\left( \begin{array}{c}
U\\
U_0
\end{array}  \right) \stackrel{\longleftrightarrow}{_{RSK}} \left(\begin{array}{ccc}
u_1&\dots&u_m\\
v_1&\dots&v_m
\end{array}  \right), 
\end{equation*}where $m=|\mu|$. Let $S$ be the new skew tableau obtained by placing $u_1,\dots,u_m$ into the new boxes while doing row-insertion $v_1,\dots,v_m$ into $A$. Then $S$ is an element of $\mathcal{S}(\nu/\lambda,U_0)$.
\item[2.]Let $S$ be an element of $\mathcal{S}(\nu/\lambda,U_0)$. Let $A'$ be an arbitrary Young tableau of shape $\lambda$. Put an order on the letters in $A'$ and $S$ in such a way that all letters in $A'$ are smaller than those in $S$. Now, suppose that 
\begin{equation*}
\left( \begin{array}{c}
V_0\\
A' \cup S
\end{array}  \right) \stackrel{\longleftrightarrow}{_{RSK}} \left(\begin{array}{ccccccc}
t_1&\dots& t_n&\quad &u_1&\dots&u_m\\
x_1&\dots&x_n&\quad&v_1&\dots&v_m
\end{array}  \right), 
\end{equation*}
where $n=|\lambda|$. Then we can construct a tableau $A$ such that $(A,U) \in \mathcal{T}(\lambda,\mu,V_0)$ by 
\begin{equation*}
\left(\begin{array}{ccc}
t_1&\dots&t_m\\
x_1&\dots&x_m
\end{array}  \right) \stackrel{\longleftrightarrow}{_{RSK}} \left( \begin{array}{c}
A\\
A'
\end{array}  \right).
\end{equation*}
\item[3.] Denote by $\mathcal{F}^{\lambda,\mu,V_0}_{\nu/\lambda,U_0}$ the map that sends $(A,U)$ in $\mathcal{T}(\lambda,\mu,V_0)$ to $S$ in $\mathcal{S}(\nu/\lambda,U_0)$. This map is a bijection between two sets (see \cite{Fulton}).
\end{itemize}

A method to compute the set $\mathcal{S}(\nu/\lambda,U_0)$ will be explained in Section \ref{symmetryLR}. In the particular case when $U_0$ is the Young tableau $\mathcal{U}_\mu$ of shape $\mu$ whose all entries in the $k$-th row are equal to $k$, one can compute explicitly the set $\mathcal{S}(\nu/\lambda,\mathcal{U}_\mu)$ by the model of Remmel and Whitney \cite{RemmelWhitney} as follows:
\begin{itemize}
\item[1.] With the skew shape $\nu/\lambda$, we number the boxes from top to bottom and right to left in each row by $1,2,\dots,|\nu/\lambda|$, respectively. The result is called the {\em reverse filling of the skew shape $\nu/\lambda$}. We denote it by $T_{\nu/\lambda}$. 
\item[2.] Define $\mathcal{O}(\nu/\lambda)$ to be the set of Young tableaux $T$ of normal shape, size $|\nu/\lambda|$, constructed from $T_{\nu/\lambda}$ satisfying the following conditions:
\begin{enumerate}
\item[](R1) If $k$ and $k+1$ appear in the same row of $T_{\nu/\lambda}$, then $k+1$ appears weakly above and strictly right of $k$ in $T$. 
\item[](R2) If $h$ appears in the box directly below $k$ in $T_{\nu/\lambda}$, then $h$ appears strictly below and weakly left of $k$ in $T$. 
$$\hbox{\begin{tikzpicture}
\draw(0,0)--++(0,-1)--++(2,0)--++(0,1)--cycle;
\draw(1,0)--++(0,-1);
\draw(0,0)node[above=-0.5cm,right=0.1cm]{$\,^{k+1}$};
\draw(1,0)node[above=-0.5cm,right=0.25cm]{$\,^{k}$};
\draw(2.5,-0.5) node[below=0cm,right=0cm]{$\longrightarrow$};
\draw[color=orange,fill=orange](5,-1)--++(3,0)--++(0,3)--++(-3,0)--cycle;
\draw[fill=white](6,0)--++(1,0)--++(0,1)--++(-1,0)--cycle; 
\draw(6,0) node[above=0.5cm,right=0.1cm]{$\,^{k+1}$};
\draw (4,-1)--++(1,0)--++(0,1)--++(-1,0)--cycle
node[above=0.5cm,right=0.25cm]{$\,^{k}$};
\draw(1,-2)--++(1,0)--++(0,-2)--++(-1,0)--cycle;
\draw(1,-3)--++(1,0);
\draw(2.5,-3) node[above=0cm,right=0cm]{$\longrightarrow$};
\draw[color=orange,fill=orange](5,-3)--++(3,0)--++(0,-3)--++(-3,0)--cycle;
\draw(7,-2)--++(1,0)--++(0,-1)--++(-1,0)--cycle;
\draw[fill=white](6,-4)--++(1,0)--++(0,-1)--++(-1,0)--cycle;
\draw(6,-4)node[above=-0.5cm,right=0.25cm]{$\,^{h}$};
\draw(7,-2)node[above=-0.5cm,right=0.25cm]{$\,^{k}$};
\draw(1,-2)node[above=-0.5cm,right=0.25cm]{$\,^{k}$};
\draw(1,-3)node[above=-0.5cm,right=0.25cm]{$\,^{h}$};
\end{tikzpicture}}
$$ 
\end{enumerate}
\item[3.]Let $\mathcal{O}_\mu(\nu/\lambda)$ be the set of all tableaux $T$ in $\mathcal{O}(\nu/\lambda)$ of shape $\mu$. For each $T$ in $\mathcal{O}_\mu(\nu/\lambda)$, we construct a word $x_{|\mu|}\dots x_1$, where $x_k$ is the row index in $T$ containing $k$. There exists an unique skew Young tableau $T^*$ of skew shape $\nu/\lambda$ such that $w(T^*)$ is the word $x_{|\mu|} \dots x_1$. 
\item[4.]It is proved that the set $\mathcal{S}(\nu/\lambda,\mathcal{U}_\mu)$ is the set of all skew tableaux $T^*$ where $T$ runs over the set of all tableaux in $\mathcal{O}_\mu(\nu/\lambda)$.  
\end{itemize}

\begin{theo}\label{LR}Let $\lambda,\mu,\nu$ be partitions. Let $T_0$ be a Young tableau of shape $\nu$ and $U_0$ be a Young tableau of shape $\mu$. We have $c_{\lambda\mu}^\nu =\#\mathcal{O}_\mu(\nu/\lambda)=\# \mathcal{S}(\nu/\lambda,U_0)=\#\mathcal{T}(\lambda,\mu,V_0)$. 
\end{theo}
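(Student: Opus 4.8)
The plan is to deduce all three equalities by stitching together facts that are already on the table in this section: the behaviour of the product $T.U$ with respect to contents, the combinatorial definition of $s_\lambda$, the bijection $\mathcal{F}^{\lambda,\mu,V_0}_{\nu/\lambda,U_0}$ of step~3 above, and the Remmel--Whitney description of $\mathcal{S}(\nu/\lambda,\mathcal{U}_\mu)$ recalled in step~4. No genuinely new combinatorial construction is required; the work lies in organizing the bookkeeping and in checking that the relevant counts depend only on shapes.

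First I would record that $\#\mathcal{T}(\lambda,\mu,V_0)$ and $\#\mathcal{S}(\nu/\lambda,U_0)$ are each independent of the tableau chosen and are equal to one another. Indeed, for \emph{every} Young tableau $V_0$ of shape $\nu$ and every Young tableau $U_0$ of shape $\mu$ the map $\mathcal{F}^{\lambda,\mu,V_0}_{\nu/\lambda,U_0}$ is a bijection from $\mathcal{T}(\lambda,\mu,V_0)$ to $\mathcal{S}(\nu/\lambda,U_0)$, so $\#\mathcal{T}(\lambda,\mu,V_0)=\#\mathcal{S}(\nu/\lambda,U_0)$; letting $V_0$ and $U_0$ range independently forces both sides to be constant and equal. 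I would call this common value $d_{\lambda\mu}^\nu$. This step is essentially free, relying only on the already-cited bijection.

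Next I would identify $d_{\lambda\mu}^\nu$ with $c_{\lambda\mu}^\nu$. Since row-insertion changes the multiset of entries only by adjoining one copy of the inserted letter, iterating along $w(U)$ gives $x^{A.U}=x^A x^U$ for all Young tableaux $A,U$. Summing $x^A x^U$ over all pairs $(A,U)$ with $\mathrm{sh}(A)=\lambda$ and $\mathrm{sh}(U)=\mu$ yields $s_\lambda s_\mu$ on one hand and $\sum_{V}\#\{(A,U):A.U=V\}\,x^{V}$ (the sum over all Young tableaux $V$) on the other. Grouping the tableaux $V$ by shape and using the previous step — the inner count equals $d_{\lambda\mu}^\nu$ for every $V$ of shape $\nu$ — together with $s_\nu=\sum_{\mathrm{sh}(V)=\nu}x^{V}$, the right-hand side collapses to $\sum_\nu d_{\lambda\mu}^\nu s_\nu$; comparing with $s_\lambda s_\mu=\sum_\nu c_{\lambda\mu}^\nu s_\nu$ and invoking the linear independence of Schur functions finishes this step. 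I expect this to be the main point to get right, precisely because the refactoring of $\sum_V(\cdots)x^{V}$ into a combination of Schur functions requires the count $\#\{(A,U):A.U=V\}$ to depend only on the shape of $V$ — which is exactly what the first step supplies.

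Finally I would handle the last equality by invoking step~4 (Remmel--Whitney): $\mathcal{S}(\nu/\lambda,\mathcal{U}_\mu)=\{T^{*}\mid T\in\mathcal{O}_\mu(\nu/\lambda)\}$, and $T\mapsto T^{*}$ is injective because a standard tableau of fixed shape $\mu$ is recovered from the list of row-indices of its entries (within a row the entries increase, so the sets of entries per row determine the tableau), and the word $x_{|\mu|}\cdots x_1$ encodes precisely these indices. Hence $\#\mathcal{S}(\nu/\lambda,\mathcal{U}_\mu)=\#\mathcal{O}_\mu(\nu/\lambda)$. Taking $U_0=\mathcal{U}_\mu$ in the chain $c_{\lambda\mu}^\nu=d_{\lambda\mu}^\nu=\#\mathcal{T}(\lambda,\mu,V_0)=\#\mathcal{S}(\nu/\lambda,U_0)$ then delivers all four quantities appearing in the statement.
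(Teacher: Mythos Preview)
The paper does not actually supply a proof of Theorem~\ref{LR}: it is stated as background (the Littlewood--Richardson rule in three equivalent guises), with the surrounding discussion citing \cite{Fulton} for the bijection $\mathcal{F}^{\lambda,\mu,V_0}_{\nu/\lambda,U_0}$ and \cite{RemmelWhitney} for the description of $\mathcal{S}(\nu/\lambda,\mathcal{U}_\mu)$, and then passes directly to an example. So there is no ``paper's own proof'' to compare against.

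Your argument is correct and is exactly the natural way to assemble the ingredients the section has set up. The first step (independence of $V_0$ and $U_0$, and equality of the two counts) follows cleanly from the bijectivity of $\mathcal{F}^{\lambda,\mu,V_0}_{\nu/\lambda,U_0}$ asserted in item~3 of the section; your ``vary one side, fix the other'' trick is the standard way to extract constancy from a family of bijections. The second step is the classical derivation of $c_{\lambda\mu}^\nu=\#\mathcal{T}(\lambda,\mu,V_0)$ from the product formula $x^{A.U}=x^A x^U$ and linear independence of Schur functions; this is precisely the argument one finds in \cite{Fulton}. The third step correctly observes that $T\mapsto T^*$ is injective (the word $w(T^*)$ records the row of every entry of $T$, and a standard tableau is determined by its row-sets), so item~4 of the section gives $\#\mathcal{O}_\mu(\nu/\lambda)=\#\mathcal{S}(\nu/\lambda,\mathcal{U}_\mu)$. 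Nothing is missing.
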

The example below shows how to compute the Littlewood-Richardson coefficients and the three models presented in Theorem \ref{LR}.
\begin{example}\label{ExampleLRrule} Set $\lambda=(3,2,1,1)$, $\mu=(4,2,1)$ and $\nu=(6,4,2,1,1)$. Then 
$$
T_{\nu/\lambda} = \begin{ytableau}
*(yellow)&*(yellow)&*(yellow)&3&2&1 \\
*(yellow)&*(yellow)&5&4\\
*(yellow)&6\\
*(yellow)\\
7
\end{ytableau}
$$
All tableaux of the set $\mathcal{O}_\mu(\nu/\lambda)$ are
$$
\begin{ytableau}
1&2&3&7\\
4&5\\
6
\end{ytableau} \quad
\begin{ytableau}
1&2&3&6\\
4&5\\
7
\end{ytableau} \quad
\begin{ytableau}
1&2&3&5\\
4&6\\
7
\end{ytableau} \quad
\begin{ytableau}
1&2&3&5\\
4&7\\
6
\end{ytableau} \quad
$$
We denote these tableaux by $T_1, T_2, T_3, T_4$, from left to right.
Hence, $$c_{\lambda\mu}^\nu = 4.$$ We have
$$ \mathcal{U}_\mu =
\begin{ytableau}
1&1&1&1\\
2&2\\
3
\end{ytableau}
$$
All tableaux of the set $\mathcal{S}(\nu/\lambda,\mathcal{U}_\mu)$ corresponding to $T_1, T_2, T_3, T_4$ are $$
\begin{ytableau}
*(yellow)&*(yellow)&*(yellow)&1&1&1\\
*(yellow)&*(yellow)&2&2\\
*(yellow)&3\\
*(yellow)\\
1
\end{ytableau} \quad
\begin{ytableau}
*(yellow)&*(yellow)&*(yellow)&1&1&1\\
*(yellow)&*(yellow)&2&2\\
*(yellow)&1\\
*(yellow)\\
3
\end{ytableau} \quad
\begin{ytableau}
*(yellow)&*(yellow)&*(yellow)&1&1&1\\
*(yellow)&*(yellow)&1&2\\
*(yellow)&2\\
*(yellow)\\
3
\end{ytableau} \quad
\begin{ytableau}
*(yellow)&*(yellow)&*(yellow)&1&1&1\\
*(yellow)&*(yellow)&1&2\\
*(yellow)&3\\
*(yellow)\\
2
\end{ytableau}
$$
We denote these tableaux by $S_1, S_2, S_3, S_4$, from left to right.
Set 
$$
V_0= \begin{ytableau}
1&1&2&3&4&5\\
2&6&6&7\\
3&7\\
4\\
5
\end{ytableau} \quad \text{ and }\quad
A'=
\begin{ytableau}
\color{red}{1}&\color{red}{1}&\color{red}{1}\\
\color{red}{2}&\color{red}{2}\\
\color{red}{3}\\
\color{red}{4}
\end{ytableau}
$$
with order that $\color{red}{1<2<3<4}$ $<1<2<3$. 
The two-rowed array corresponding to the pair $\left( \begin{array}{c}
V_0\\
A' \cup S_1
\end{array}  \right)$ is
\begin{equation*}
\left(\begin{array}{ccc}
\text{$\color{red}{1\,1\,1\,2\,2\,3\,4}$}&\,& 1\,1\,1\,1\,2\,2\,3 \\
1\,5\,6\,2\,4\,3\,2&\,&1\,3\,7\,7\,4\,6\,5
\end{array}  \right).
\end{equation*}
The tableaux $A_1$ and $U_1$ such that $(A_1,U_1) \in \mathcal{T}(\lambda,\mu,V_0)$ corresponding to $S_1$ are
$$A_1 = 
\begin{ytableau}
1&2&2\\
3&6\\
4\\
5
\end{ytableau}\quad \text{ and }\quad U_1= 
\begin{ytableau}
1&3&4&5\\
6&7\\
7
\end{ytableau}
$$
The tableaux $A_2$ and $U_2$ such that $(A_2,U_2) \in \mathcal{T}(\lambda,\mu,V_0)$ corresponding to $S_2$ are
$$A_2 = 
\begin{ytableau}
1&2&6\\
3&7\\
4\\
5
\end{ytableau}\quad \text{ and }\quad U_2= 
\begin{ytableau}
1&3&4&5\\
2&7\\
6
\end{ytableau}
$$
The tableaux $A_3$ and $U_3$ such that $(A_3,U_3) \in \mathcal{T}(\lambda,\mu,V_0)$ corresponding to $S_3$ are
$$A_3 = 
\begin{ytableau}
1&2&7\\
3&6\\
4\\
5
\end{ytableau}\quad \text{ and }\quad U_3= 
\begin{ytableau}
1&3&4&5\\
2&7\\
6
\end{ytableau}
$$
The tableaux $A_4$ and $U_4$ such that $(A_4,U_4) \in \mathcal{T}(\lambda,\mu,V_0)$ corresponding to $S_4$ are
$$A_4 = 
\begin{ytableau}
1&2&7\\
3&6\\
4\\
5
\end{ytableau}\quad \text{ and }\quad U_4= 
\begin{ytableau}
1&3&4&5\\
2&6\\
7
\end{ytableau}
$$

\end{example}  

\subsection{Tableau Switching}
In this section, we recall the definition and basic properties of the switching procedure in the paper \cite{Benkart}.\\

For each skew shape $\gamma$, we define a {\em perforated tableau} $T$ of shape $\gamma$ to be a result of filling some boxes in $Y(\gamma)$ with integers such that:
\begin{itemize}
\item[](PT1) The entries in each column are strictly increasing from top to bottom.
\item[](PT2) The entries in the weakly northwest of each entry $t$ of $T$ are less than or equal to $t$. 
\end{itemize}
Let $S, T$ be perforated tableaux of shape $\gamma$. We say that $S, T$ {\em fill $\gamma$} if all boxes in $Y(\gamma)$ are filled by entries of $S$ or $T$, and no box is filled twice. We then call $S\cup T$ a {\em perforated pair} of shape $\gamma$. \\

Let $S\cup T$ be a perforated pair of shape $\gamma$. Let $s$ in $S$ and $t$ in $T$ be adjacent integers, with $t$ positioned below or to the right of $s$. We define {\em switching} $s \leftrightarrow t$ by interchanging $s$ and $t$ such that after the action, both perforated tableau of shape $\gamma$ filled by entries $t$, and perforated tableau of shape $\gamma$ filled by entries $s$ satisfy the conditions (PT1) and (PT2).   \\

Choose a random pair $(s,t)$ in $S \cup T$ such that we can do the switching $s\leftrightarrow t$. Repeat this process until there are no more pairs $(s,t)$ in $S\cup T$ that can be switched $s \leftrightarrow t $. The result is a new perforated pair $T'\cup S'$ of shape $\gamma$, where $S'$ is the perforated tableau filled by entries $s$ and $T'$ is the perforated tableau filled by entries $t$. The point is that the resulting pair $T' \cup S'$ does not depend on the choices, it is denoted by $^S T \cup S_T$ (see \cite{Benkart}). The process we have done to produce $^S T \cup S_T$ from $S\cup T$ is called the {\em switching procedure}. The map that sends $S\cup T$ to $^S T \cup S_T$ is called the {\em switching map}. \\   

The example below visualizes the switching procedure.
\begin{example}Let $\gamma=(4,3,3,2)/(2,1)$. The tableau $S$ with red entries and the tableau $T$ with blue entries below are perforated tableaux of shape $\gamma$.
$$
S= \begin{ytableau}
*(yellow)& *(yellow)& \color{red}{1} &\\
*(yellow)&&\\
\color{red}{1}&&\color{red}{2}\\
&\color{red}{3}
\end{ytableau}\quad
T= \begin{ytableau}
*(yellow)&*(yellow)&&\color{blue}{-1}\\
*(yellow)&\color{blue}{-2}&\color{blue}{-2}\\
&\color{blue}{1}&\\
\color{blue}{2}&
\end{ytableau}
\quad
S\cup T = \begin{ytableau}
*(yellow)&*(yellow)&\color{red}{1}&\color{blue}{-1}\\
*(yellow)&\color{blue}{-2}&\color{blue}{-2}\\
\color{red}{1}&\color{blue}{1}&\color{red}{2}\\
\color{blue}{2}&\color{red}{3}
\end{ytableau}
$$
Look at the entries inside the circles below
$$
\begin{ytableau}
*(yellow)&*(yellow)&\color{red}{1}&\color{blue}{-1}\\
*(yellow)&\color{blue}{-2}&\color{blue}{-2}\\
\circled{\color{red}{1}}&\circled{ \color{blue}{1}}&\color{red}{2}\\
\circled{\color{blue}{2}}&\color{red}{3}
\end{ytableau}
$$
We see that we can just switch  $\color{red}{1}$ $\leftrightarrow$ $\color{blue}{1}$, but we cannot switch $\color{red}{1}$ $\leftrightarrow$ $\color{blue}{2}$. Indeed, after switching $\color{red}{1}$ $\leftrightarrow$ $\color{blue}{1}$, we get 
$$
\begin{ytableau}
*(yellow)&*(yellow)&\color{red}{1}&\color{blue}{-1}\\
*(yellow)&\color{blue}{-2}& \color{blue}{-2}\\
\color{blue}{1}&\color{red}{1}&\color{red}{2}\\
\color{blue}{2}&\color{red}{3}
\end{ytableau}
$$
The new tableau formed by the red entries and the new tableau formed by blue entries satisfies the conditions (PT1), (PT2). But after switching $\color{red}{1}$ $\leftrightarrow$ $\color{blue}{2}$, the new tableau formed by the blue entries does not satisfy the condition (PT2).\\

Here is the visualization of the switching procedure with starting point $S \cup T$ (we choose pairs in circles to switch).
$$
\begin{ytableau}
*(yellow)&*(yellow)&\color{red}{1}&\color{blue}{-1}\\
*(yellow)&\color{blue}{-2}&\color{blue}{-2}\\
\circled{\color{red}{1}}&\circled{\color{blue}{1}}&\color{red}{2}\\
\color{blue}{2}&\color{red}{3}
\end{ytableau} \quad
\longrightarrow \quad
\begin{ytableau}
*(yellow)&*(yellow)&\circled{\color{red}{1}}&\color{blue}{-1}\\
*(yellow)&\color{blue}{-2}&\circled{ \color{blue}{-2}}\\
\color{blue}{1}&\color{red}{1}&\color{red}{2}\\
\color{blue}{2}&\color{red}{3}
\end{ytableau}
\quad \longrightarrow \quad
\begin{ytableau}
*(yellow)&*(yellow)&\color{blue}{-2}&\color{blue}{-1}\\
*(yellow)&\color{blue}{-2}& \color{red}{1}\\
\color{blue}{1}&\color{red}{1}&\color{red}{2}\\
\color{blue}{2}&\color{red}{3}
\end{ytableau}
$$
Hence, $$
\,^S T = \begin{ytableau}
*(yellow)&*(yellow)&\color{blue}{-2}&\color{blue}{-1}\\
*(yellow)&\color{blue}{-2}&\\
\color{blue}{1}&&\\
\color{blue}{2}&
\end{ytableau} \quad \text{ and } \quad
S_T = 
\begin{ytableau}
*(yellow)&*(yellow)&&\\
*(yellow)&&\color{red}{1}\\
&\color{red}{1}&\color{red}{2}\\
&\color{red}{3}
\end{ytableau}
$$ 
\end{example} 

Let $S,T$ be skew tableaux. We say that $T$ extends $S$ if $T$ has skew shape $\nu/\lambda$ and $S$ has shape $\lambda/\mu$ for some partitions $\nu\geq \lambda\geq \mu$. The following theorem is a collection of some important properties in Theorem 2.2 and Theorem 3.1 in the paper \cite{Benkart}. 
\begin{theo}\label{switchingproperties}
Let $S,T$ be skew Young tableaux such that $T$ extends $S$. Then 
\begin{itemize}
\item[1.]$S_T$ and $^ST$ are skew Young tableaux, $S_T$ extends $^ST$. 
\item[2.]$^ST \cup S_T$ has the same shape as $S\cup T$.
\item[3.]$Rect(S) =Rect(S_T)$. 
\item[4.]$Rect(T)=Rect(^S T)$.
\item[5.]The switching map $S \cup T \mapsto \,^ST \cup S_T$ is an involution. 
\end{itemize}  
\end{theo}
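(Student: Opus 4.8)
I would prove the five parts in the order $2,\ 5,\ 4,\ 1,\ 3$; the real work is the well-definedness of the switching procedure and the comparison of one switch with a jeu de taquin slide, and I would rely on \cite{Benkart} for the most delicate bookkeeping. Part~$2$ is immediate: an elementary switch $s\leftrightarrow t$ merely exchanges the contents of two boxes of $\gamma$, so it changes neither which boxes are occupied nor how many, whence every perforated pair occurring in the procedure fills $\gamma$ and, in particular, $^ST\cup S_T$ has shape $\gamma$. All of the remaining parts presuppose that the output of the procedure is independent of the choices made during it, so I would establish that first.

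\emph{Well-definedness.} First I would note termination: let $\Phi$ be the sum of $\mathrm{row}(b)+\mathrm{col}(b)$ over the boxes $b$ currently holding an entry of the ``$T$''-tableau; a switch moves one such entry exactly one box up or one box left, so $\Phi$ drops by $1$, and being a nonnegative integer it forces every switching sequence to be finite. Since the procedure terminates, by the diamond lemma it suffices to verify local confluence: if from a perforated pair one may perform a switch $\alpha$ and also a switch $\beta$, then the two one-step results have a common descendant. When $\alpha$ and $\beta$ act on disjoint pairs of boxes that are moreover non-adjacent, the switches commute; in the remaining cases the relevant entries lie in a $2\times 2$ or $3\times 3$ window and one settles them by a finite inspection, using only (PT1) and (PT2) and the fact that an adjacent pair need not be switchable (as in the example above). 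This is essentially Theorem~2.2 of \cite{Benkart}, and this case analysis is the step I expect to be the main obstacle; it produces the well-defined pair $^ST\cup S_T$.

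\emph{Parts $5$, $4$, $1$.} For the involution, note that if $\sigma_1,\dots,\sigma_N$ is a run of the procedure and $\sigma_k$ carries a $T$-entry $t$ from a box $b$ to the adjacent box $a$ lying above or to the left of $b$ (while carrying an $S$-entry $s$ from $a$ to $b$), then in the pair $^ST\cup S_T$, in which $S_T$ extends $^ST$, the reverse interchange switches the $^ST$-entry $t$ with the $S_T$-entry $s$ sitting to its southeast, and this is legal because it restores an intermediate configuration of the original run. Hence $\sigma_N^{-1},\dots,\sigma_1^{-1}$ is a valid run on $^ST\cup S_T$ ending at $S\cup T$, so by well-definedness the switching map is an involution. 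For part~$4$, the key observation is that switching a \emph{single-box} $S$ past $T$ is forced and coincides with a jeu de taquin slide on $T$: if the unique $S$-entry $s$ occupies a box whose right neighbour $x$ and lower neighbour $y$ are entries of $T$, then (PT1) permits the switch $s\leftrightarrow x$ only when $x<y$, and (PT2) permits $s\leftrightarrow y$ only when $y\leq x$ --- precisely the sliding rule recalled earlier --- so $s$ travels along the slide path into its starting box while the $T$-entries rearrange exactly as that slide prescribes. For a general $S$ (with $T$ extending $S$, of shapes $\lambda/\mu$ and $\nu/\lambda$), well-definedness lets me evacuate the entries of $S$ one at a time, each from an outer corner of the current inner shape; the subtler point --- this is Theorem~3.1 of \cite{Benkart} --- is that the order of evacuations can be chosen so that an already-evacuated entry never obstructs a later one, whence the $T$-entries undergo $|S|$ successive jeu de taquin slides. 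As jeu de taquin preserves rectification, $Rect(^ST)=Rect(T)$, which is part~$4$. The same picture keeps the $T$-region equal to the skew diagram of a partition pair at every stage, so $^ST$ finishes on some $\lambda'/\mu$ with $\mu\leq\lambda'\leq\nu$ and, by part~$2$, $S_T$ fills $\nu/\lambda'$; since any perforated tableau whose occupied boxes form a skew diagram automatically satisfies (Y1) and (Y2), both $^ST$ and $S_T$ are skew Young tableaux and $S_T$ extends $^ST$, which is part~$1$.

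\emph{Part $3$.} Applying part~$4$ to the pair $^ST\cup S_T$, in which $S_T$ extends $^ST$, shows that the first tableau produced by switching that pair has the same rectification as $S_T$; by the involution of part~$5$ that pair equals $S\cup T$, so the first tableau (the one with inner shape $\mu$) is $S$, and therefore $Rect(S)=Rect(S_T)$. Apart from the local-confluence inspection, the only other genuinely combinatorial input is the ordered-evacuation statement, and both are carried out in \cite{Benkart}.
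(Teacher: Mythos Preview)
The paper does not supply its own proof of this theorem: it is stated as a quotation of results from \cite{Benkart} (explicitly, ``a collection of some important properties in Theorem~2.2 and Theorem~3.1'' of that paper), with no argument given. So there is nothing to compare your proposal against except the original source you already cite.

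That said, your sketch is a faithful and correct outline of the Benkart--Sottile--Stroomer argument. Part~2 is indeed trivial; your termination via the potential $\Phi$ and local confluence via the diamond lemma is exactly their strategy for well-definedness; your reduction of Part~4 to the observation that switching a single $S$-entry past $T$ reproduces a jeu de taquin slide is the heart of their Theorem~3.1; and your derivation of Part~3 from Parts~4 and~5 is clean. One small point: in your Part~1 you write ``any perforated tableau whose occupied boxes form a skew diagram automatically satisfies (Y1) and (Y2)'' --- this is true, but it is worth saying why (PT2) forces row weak increase once the shape is a skew diagram, since (PT2) as stated is a northwest condition rather than a row condition. Otherwise the logical dependencies are set up correctly and the places you defer to \cite{Benkart} (the finite case check for confluence, and the ordered evacuation) are precisely the two nontrivial combinatorial verifications.
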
  
\begin{example}Let
$$S=\begin{ytableau}
\color{red}{1}&\color{red}{1}&\color{red}{1}\\
\color{red}{2}&\color{red}{2}\\
\color{red}{3}\\
\color{red}{4}
\end{ytableau}
\quad \text{ and }\quad
T= \begin{ytableau}
*(yellow)& *(yellow) &*(yellow) &\color{blue}{1}&\color{blue}{1}&\color{blue}{1}\\
*(yellow)&*(yellow)& \color{blue}{2}&\color{blue}{2}\\
*(yellow)&\color{blue}{3}\\
*(yellow)\\
\color{blue}{1}
\end{ytableau}
$$
Then $T$ extends $S$ and
$$\,^ST = \begin{ytableau}
\color{blue}{1}&\color{blue}{1}&\color{blue}{1}&\color{blue}{1}\\
\color{blue}{2}&\color{blue}{2}\\
\color{blue}{3}
\end{ytableau} \quad\text{ and }\quad
S_T = \begin{ytableau}
*(yellow)&*(yellow)&*(yellow)&*(yellow)&\color{red}{1}&\color{red}{1}\\
*(yellow)&*(yellow)&\color{red}{1}&\color{red}{2}\\
*(yellow)&\color{red}{2}\\
\color{red}{3}\\
\color{red}{4}
\end{ytableau}
$$
\end{example}

\subsection{The Symmetry of Littlewood-Richardson Coefficients}\label{symmetryLR}

The tableau switching provides a bijective proof of the symmetry of Littlewood-Richardson coefficients
\begin{equation*}
c_{\lambda\mu}^\nu = c_{\mu\lambda}^\nu.
\end{equation*} 
Indeed, let $A_0$ be a Young tableau of shape $\lambda$ and $U_0$ be a Young tableau of shape $\mu$. We can describe a one-to-one correspondence between $\mathcal{S}(\nu/\mu,A_0)$ and $\mathcal{S}(\nu/\lambda,U_0)$ by tableau switching as follows:
\begin{itemize}
\item[1.] Let $S$ be an element of $\mathcal{S}(\nu/\lambda,U_0)$. The switching map sends $A_0 \cup S$ to $^{A_0}S \cup (A_0)_S$. By Theorem \ref{switchingproperties}, we have $^{A_0}S = U_0$ and $Rect((A_0)_S)= A_0$. Hence, $(A_0)_S \in \mathcal{S}(\nu/\mu,A_0)$. 
\item[2.]By Theorem \ref{switchingproperties}, the switching map is an involution. Hence, the map that sends $S$ to $(A_0)_S$ is a bijection between $\mathcal{S}(\nu/\lambda,U_0)$ and $\mathcal{S}(\nu/\mu,A_0)$. We denote this map by $\mathcal{B}^{\nu/\lambda,U_0}_{\nu/\mu,A_0}$.
\end{itemize}

Let $V_0$ and $W_0$ be Young tableaux of shape $\nu$. The composition of the bijections below 
\begin{equation*}
\mathcal{T}(\lambda,\mu,V_0) \xrightarrow{{\mathcal{F}^{\lambda,\mu,V_0}_{\nu/\lambda ,U_0}}} \mathcal{S}(\nu/\lambda,U_0) \xrightarrow{{\mathcal{B}^{\nu/\lambda,U_0}_{\nu/\mu,A_0}}} \mathcal{S}(\nu/\mu,A_0) \xrightarrow{\left( \mathcal{F}^{\mu,\lambda, W_0}_{\nu/\mu ,A_0}  \right)^{-1} } \mathcal{T}(\mu,\lambda,W_0)
\end{equation*}  
gives us a bijection between the set $\mathcal{T}(\lambda,\mu,V_0)$ and the set $\mathcal{T}(\mu,\lambda,W_0)$. We denote this map by $\mathcal{S}^{\lambda,\mu,\nu}_{V_0,U_0,A_0,W_0}$.

\begin{rema}
Section \ref{LRsubsection} provides an algorithm to determine the set $\mathcal{S}(\nu/\lambda,\mathcal{U}_\mu)$. Applying then $\mathcal{B}^{\nu/\lambda,\mathcal{U}_\mu}_{\nu/\mu,A_0}$, we get an algorithm to compute  $\mathcal{S}(\nu/\mu,A_0)$ for any $A_0$.  
\end{rema}

\section{Shifted Littlewood-Richardson Coefficients}
In this section, we present the definition of Stembridge's models, geometric points of view for shifted Littlewood-Richardson coefficients in \cite{Worley}, \cite{Stembridge}.

\subsection{Shifted Tableaux}
A partition $\lambda =(\lambda_1,\lambda_2,\dots)$ is said to be {\em strict} if $\lambda_1>\lambda_2>\dots$. \\

Each strict partition $\lambda$ is presented by a {\em shifted diagram $sY(\lambda)$} that is a collection of boxes such that: 
\begin{enumerate}
\item[](SD1) The leftmost boxes of each row are in the main diagonal.
\item[](SD2) The numbers of boxes from top row to bottom row are $\lambda_1, \lambda_2,\dots$, respectively.
\end{enumerate}

A {\em shifted tableau} $T$ of shifted shape $\lambda$ is a result of filling the shifted diagram $sY(\lambda)$ by the ordered alphabet $\{1'<1<2'<2<\dots \}$ such that 
\begin{enumerate}
\item[]\label{T1}(T1) The entries in each column (from top to bottom) and in each row (from left to right) are weakly increasing.
\item[]\label{T2}(T2) The entries $k'$ in each row are strictly increasing from left to right.
\item[]\label{T3}(T3) The entries $k$ in each column are strictly increasing from top to bottom.
\end{enumerate}

The shifted tableau $T$ is said to have {\em content} $\gamma =(\gamma_1,\gamma_2,\dots)$ if $\gamma_i$ is the number of $i$ or $i'$ in $T$. We write \begin{equation*}
x^T = x^\gamma = x_1^{\gamma_1} x_2^{\gamma_2} \dots.
\end{equation*} 

Let $\nu =(\nu_1,\nu_2,\dots)$ and $\mu=(\mu_1,\mu_2,\dots)$ be two strict partitions with $\nu \geq \mu$. We define the {\em skew shifted diagram $sY(\nu/\mu)$} as the result of removing boxes in shifted diagram $sY(\mu)$ from shifted diagram $sY(\nu)$. A {\em skew shifted tableau} $T$ of skew shifted shape $\nu/\mu$ is a result of filling the shifted diagram $sY(\nu/\mu)$ by the ordered alphabet $\{1'<1<2'<2<\dots\}$ satisfying the rules (T1), (T2) and (T3). The {\em content} of a skew shifted tableau $T$ is defined by the same way as for a shifted tableau. 

\begin{example} Let $\lambda=(4,2,1)$. Then the shifted diagram $sY(\lambda)$ is  
$$\begin{ytableau}
\,&\,&\,&\,\\
\none&\,&\,\\
\none&\none&\,
\end{ytableau}
$$
And 
$$
T=\begin{ytableau}
1&2'&2&2\\
\none&2'&3\\
\none&\none&4'
\end{ytableau}
$$
is a shifted tableau of shifted shape $(4,2,1)$. The content of $T$ is $(1,4,1,1)$.

\end{example}

\subsection{Shifted Jeu de Taquin}
For the skew shifted diagram $sY(\nu/\mu)$, we also define {\em inner corners} and {\em outside corners} by the same way as for the case of skew Young diagrams. Let $T$ be a skew shifted tableau of skew shifted shape $\nu/\mu$ without entries $k'$. Let $b$ be an inner corner of skew shifted diagram $sY(\nu/\mu)$, we define {\em shifted sliding} $b$ out of $T$, and {\em shifted jeu de taquin} on $T$, {\em shifted rectification} of $T$ which we denote by $sRect(T)$, by the same way as for the case of skew Young tableaux.\\

Here is an example of shifted jeu de taquin.
\begin{example}Set 
$$T =\begin{ytableau}
*(yellow)&*(yellow)&*(yellow)&1\\
\none &*(yellow)&2&3\\
\none &\none &4&5
\end{ytableau}$$
The process of applying the shifted jeu de taquin on $T$ can be visualized as follows:
$$
\begin{ytableau}
*(yellow)&*(yellow)&*(yellow)&1\\
\none &*(red)&2&3\\
\none &\none &4&5
\end{ytableau} \longrightarrow  
\begin{ytableau}
*(yellow)&*(yellow)&*(red)&1\\
\none &2&3&5\\
\none &\none &4
\end{ytableau} \longrightarrow 
\begin{ytableau}
*(yellow)&*(red)&1&5\\
\none &2&3\\
\none &\none &4
\end{ytableau} \longrightarrow
\begin{ytableau}
*(red)&1&3&5\\
\none &2&4
\end{ytableau} \longrightarrow
\begin{ytableau}
1&2&3&5\\
\none &4
\end{ytableau}
$$
where the boxes in red are chosen to be slid. Hence, 
$$
sRect(T)= 
\begin{ytableau}
1&2&3&5\\
\none &4
\end{ytableau}
$$
\end{example}

\subsection{Shifted Littlewood-Richardson Rule}
The {\em Schur $Q$-function} $Q_\lambda = Q_{\lambda}(x)$ in variables $x_1,x_2,\dots$ is defined as the sum of $x^T$ where $T$ runs over the shifted tableaux of shape $\lambda$. Since every coefficient in $Q_\lambda$ is divisible by $2^{l(\lambda)}$, we can define a formal power series with integer coefficients 
\begin{equation*}
P_\lambda = P_\lambda(x) := 2^{-l(\lambda)}Q_\lambda(x).
\end{equation*}
We define the {\em power-sum symmetric function} $p_r$ with $r\geq 1$ by 
\begin{equation*}
p_r = x_1^r + x_2^r + \dots .
\end{equation*}
For each partition $\lambda =(\lambda_1,\lambda_2, \dots)$, we define 
\begin{equation*}
p_\lambda = p_{\lambda_1}p_{\lambda_2}\dots.
\end{equation*}
The following set is a $\mathbb{Z}$-basis of $\Lambda^n$
$$
\left\lbrace p_\lambda \mid \lambda \text{ is a partition of }n \right\rbrace.
$$
Let $\Omega_{\mathbb{Q}} = \bigoplus\limits_{n \geq 0}\Omega_{\mathbb{Q}}^n$ be the graded subalgebra of $\Lambda_\mathbb{Q}= \mathbb{Q}\otimes_{\mathbb{Z}}\Lambda$ generated by $1,p_1,p_3,p_5,\dots$. Let $\Omega = \Omega_{\mathbb{Q}} \cap \Lambda$ be the $\mathbb{Z}$-coefficients graded subring of $\Omega_\mathbb{Q}$. We write $\Omega =\bigoplus\limits_{n\geq 0}(\Omega^n_\mathbb{Q} \cap \Lambda)$ as a graded ring.  Since 
$$
\left\lbrace P_\lambda \mid \lambda \text{ is a strict partition of }n\right\rbrace
$$ is a $\mathbb{Z}$-basis of $\Omega^n_\mathbb{Q} \cap \Lambda$, we can define integers $f_{\lambda\mu}^\nu$ for each strict partitions $\lambda,\mu,\nu$ by 
\begin{equation*}
P_\lambda P_\mu = \sum\limits_{\nu} f_{\lambda\mu}^\nu P_\nu.
\end{equation*}
The integers $f_{\lambda\mu}^\nu$ are called the {\em shifted Littlewood-Richardson coefficients}.\\

For any (skew) shifted tableau $T$, we define the {\em word} $w(T)$ to be the sequence obtained by reading the rows of $T$ from left to right, starting from bottom to top.\\

Given a word $w=w_1w_2\dots w_n$ over the alphabet $\{1'<1<2'<2<\dots\}$, we define a sequence of statistics $m_i(j)\, (0\leq j\leq 2n, i\geq 1)$ as follows:  
\begin{align*}
m_i(j)&= \text{ multiplicity of $i$ among }w_n \dots w_{n-j+1} & (0\leq j \leq n),\\
m_i(j)&= \text{ multiplicity of $i'$ among }w_1 \dots w_{j-n}& \\
&+\text{ multiplicity of $i$ among }w_n\dots w_1& (n < j\leq 2n).
\end{align*}
We say that the word $w$ is a {\em shifted lattice word} if, whenever $m_i(j)=m_{i-1}(j)$, we have
$$
w_{n-j}\ne i,i' \text{ if $0\leq j <n$},
$$
$$
w_{j-n+1} \ne i-1,i' \text{ if $n\leq j<2n$.}
$$
\begin{example}
The following words are shifted lattice words: $22'3121'111, 32212'1'111$.
\end{example}

Stembridge in \cite{Stembridge} obtained a shifted analog of the Littlewood-Richardson rule as follows.
\begin{theo}\label{StembridgeLR}
Let $\lambda,\mu,\nu$ be strict partitions. Then the coefficient $f_{\lambda\mu}^\nu$ is the number of skew shifted tableaux $T$ of skew shifted shape $\nu/\mu$ and content $\lambda$ satisfying
\begin{enumerate}
\item[](F1) The leftmost letter of $\{i,i' \text{ in }w(T)\}$ is unmarked $(1\leq i\leq l(\lambda))$.
\item[](F2) The word $w(T)$ is a shifted lattice word.
\end{enumerate}  
\end{theo}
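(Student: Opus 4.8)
The plan is to transport to the shifted setting the derivation of the Littlewood--Richardson rule from jeu de taquin recalled in Section~\ref{LRsubsection}, with Schur $Q$-functions playing the role of Schur functions. On the ring $\Omega$ there is a bilinear pairing for which $\langle P_\lambda,Q_\mu\rangle=\delta_{\lambda\mu}$, and the \emph{skew} Schur $Q$-function defined combinatorially by $Q_{\nu/\mu}:=\sum_{T}x^{T}$, the sum over all skew shifted tableaux $T$ of shape $\nu/\mu$, satisfies $Q_{\nu/\mu}=\sum_{\lambda}f_{\lambda\mu}^{\nu}Q_{\lambda}$, equivalently $\langle Q_{\nu/\mu},P_{\lambda}\rangle=f_{\lambda\mu}^{\nu}$; this is a standard consequence of the compatibility of multiplication and comultiplication on $\Omega$ with the pairing. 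So the theorem is equivalent to the purely combinatorial assertion that, when $\sum_{T}x^{T}$ is expanded in the basis $\{Q_{\lambda}\}$, the coefficient of $Q_{\lambda}$ equals the number of those $T$ with content $\lambda$ that satisfy (F1) and (F2). I would isolate this assertion and prove it by rectification.

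The engine is a shifted jeu de taquin, and the first task is to make shifted rectification $sRect$ well defined on skew shifted tableaux carrying primed entries --- this is genuinely needed, because the tableaux being counted generally have primes whereas the sliding recalled above is defined only for tableaux without primes. I would do this by \emph{standardization}: relabel the primed and unprimed copies of each value, read in a fixed order, so as to obtain a standard (primeless) skew shifted tableau on $\{1,\dots,|\nu/\mu|\}$ to which the primeless shifted jeu de taquin applies. One must check that standardization is well defined and compatible with slides --- this is the delicate mark-handling step, and is in essence the Worley--Sagan construction \cite{Worley},\cite{Sagan}. Granting it, the needed inputs are: (a) $sRect$ is single valued, the shifted analogue of Lemma~\ref{wordrect}; (b) $sRect$ preserves content; and (c) for a fixed shifted tableau $U_{0}$, the number of skew shifted tableaux of shape $\nu/\mu$ with $sRect=U_{0}$ depends only on the shape of $U_{0}$. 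Item (c) is the shifted counterpart of Theorem~\ref{LR} and I would prove it with a shifted tableau-switching/dual-equivalence argument modelled on Section~\ref{symmetryLR}. From (a)--(c) one gets $\sum_{T}x^{T}=\sum_{\lambda}\#\{T:sRect(T)=U_{0}^{\lambda}\}\,Q_{\lambda}$ for any fixed targets $U_{0}^{\lambda}$ of shape $\lambda$, which already determines the coefficients.

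What then remains is to match the rectification target with the two conditions. Take $U_{0}^{\lambda}$ to be the canonical shifted tableau $\mathcal{Y}_{\lambda}$ with every entry of row $k$ equal to $k$. I expect (F2), the shifted lattice-word condition, to be exactly the condition that $sRect(T)$ have \emph{shape} $\lambda$ --- the mirror of the classical fact that a reading word is Yamanouchi iff rectification reaches the superstandard tableau; and (F1), that the first occurrence of each value $\{i,i'\}$ in $w(T)$ be unmarked, to be the further condition selecting, among those $T$ whose rectification has shape and content $\lambda$, the ones landing on $\mathcal{Y}_{\lambda}$ rather than on one of the $2^{\,l(\lambda)}$ shifted tableaux of shape and content $\lambda$ obtained by priming diagonal entries. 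Establishing these two equivalences --- in particular tracing how a slide moves the ``first occurrence'' markers through the standardization --- is the main obstacle; the crux is precisely the freedom of marks on the main diagonal, which is responsible for the factor $2^{l(\lambda)}$ relating $Q_{\lambda}$ to $P_{\lambda}$ and which (F1) exists to neutralize. If the switching-based proof of (c) turns out to be recalcitrant, I would fall back on Stembridge's original strategy \cite{Stembridge}, which bypasses jeu de taquin and instead builds a shifted Littlewood--Richardson bijection from a sign-reversing involution on the generating function $\sum_{T}x^{T}$ exploiting preservation of the lattice property.
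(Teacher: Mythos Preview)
The paper does not prove Theorem~\ref{StembridgeLR}. It is quoted as background: the text introduces it with ``Stembridge in \cite{Stembridge} obtained a shifted analog of the Littlewood--Richardson rule as follows'' and then states the theorem without argument. So there is no ``paper's own proof'' to compare your proposal against; the result is imported wholesale from \cite{Stembridge}, and everything the paper builds (Theorems~\ref{newf}, \ref{newg}, \ref{gTtT}) takes it as a black box.

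That said, as a sketch of how one would actually prove the theorem, your outline is broadly sound but is really two strategies braided together. The rectification route you describe --- standardize, slide, and identify (F1)+(F2) with landing on the canonical shifted tableau $\mathcal{Y}_\lambda$ --- is the Worley/Sagan/Haiman line, and your caveats about mark-handling on the diagonal and about step~(c) are exactly where the work lies. Your expectation that (F2) alone pins down the \emph{shape} of the rectification while (F1) selects the diagonal-unprimed representative among the $2^{l(\lambda)}$ tableaux of that shape and content is the right heuristic, but in practice Stembridge does not separate the two conditions this cleanly; his original argument in \cite{Stembridge} proceeds via shifted Knuth relations and a direct characterization of the ``LRS'' tableaux, closer to the fallback you mention in your last sentence than to the jeu de taquin plan you lead with. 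If you pursue the rectification route, the honest obstacle is your item~(c): the shape-independence of the rectification count in the shifted setting requires either Haiman's shifted dual equivalence or an equivalent switching argument, and that is itself a substantial theorem, not a lemma you can sketch in passing.
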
  
For each strict partition $\lambda$ and partition $\mu$ of the same integer $n$, let $g_{\lambda \mu}$ be the integer defined by 
\begin{equation*}
P_\lambda =\sum\limits_{|\mu|=n}g_{\lambda\mu}s_\mu.
\end{equation*}
In the proof of Theorem 9.3 in \cite{Stembridge}, Stembridge used the fact that 
\begin{equation}\label{glambdamu}
g_{\lambda \mu} = f_{\lambda \delta}^{\mu+\delta}, 
\end{equation}where 
\begin{equation*}
\mu=(\mu_1,\mu_2,\dots,\mu_l) \text{ a partition with }l=l(\mu),
\end{equation*}
\begin{equation*}
\delta =(l,l-1,\dots,1),
\end{equation*}
\begin{equation*}
 \mu +\delta =(\mu_1+l, \mu_2+l-1,\dots,\mu_l+1).
\end{equation*}
With the identity (\ref{glambdamu}), he obtained an explicit interpretation of $g_{\lambda\mu}$ as in the following theorem.
\begin{theo}\label{Stembridgeg}
Let $\lambda$ be a strict partition and let $\mu$be a partition. Then the coefficient $g_{\lambda\mu}$ is the number of skew shifted tableaux $T$ of shape $\mu$ and content $\lambda$ satisfying
\begin{enumerate}
\item[](G1) The leftmost letter of $\{i,i' \text{ in }w(T)\}$ is unmarked $(1\leq i\leq l(\lambda))$.
\item[](G2) The word $w(T)$ is a shifted lattice word.
\end{enumerate} 
\end{theo}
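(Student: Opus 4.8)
The plan is to obtain Theorem~\ref{Stembridgeg} as an immediate consequence of the shifted Littlewood--Richardson rule (Theorem~\ref{StembridgeLR}) together with the identity~(\ref{glambdamu}), once one observes that the skew shifted shape appearing there is, up to a horizontal translation, an ordinary Young diagram. So first I would set up the specialization: put $l=l(\mu)$, $\delta=(l,l-1,\dots,1)$ and $\mu+\delta=(\mu_1+l,\mu_2+l-1,\dots,\mu_l+1)$. Here $\delta$ is strict, $\lambda$ is strict by hypothesis, and $\mu+\delta$ is strict because the inequality $\mu_i+l-i+1>\mu_{i+1}+l-i$ is just $\mu_i\ge\mu_{i+1}$; moreover $\mu+\delta\ge\delta$. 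Hence Theorem~\ref{StembridgeLR} applies to $f_{\lambda\delta}^{\mu+\delta}$, and by~(\ref{glambdamu}) the number $g_{\lambda\mu}=f_{\lambda\delta}^{\mu+\delta}$ equals the number of skew shifted tableaux of skew shifted shape $(\mu+\delta)/\delta$, content $\lambda$, satisfying (F1) and (F2).

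The one computation I would actually carry out is the shape identity $sY((\mu+\delta)/\delta)=Y(\mu)$ translated $l$ columns to the right. In row $i$ the shifted diagram $sY(\mu+\delta)$ occupies columns $i,i+1,\dots,\mu_i+l$ (its leftmost box is on the main diagonal, in column $i$), while $sY(\delta)$ occupies columns $i,i+1,\dots,l$; removing the latter from the former leaves exactly columns $l+1,l+2,\dots,l+\mu_i$, that is, $\mu_i$ boxes beginning in column $l+1$ in each nonempty row $i=1,\dots,l$. Since every row begins in the same column $l+1$, this skew shifted diagram is a straight shape: it is precisely $Y(\mu)$ shifted rightward by $l$. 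Consequently, a filling of $sY((\mu+\delta)/\delta)$ and the corresponding filling of $Y(\mu)$ obtained by this translation are the same array; the inequalities (T1), (T2), (T3) are preserved by the translation, the content is preserved, and the reading word $w(T)$ is literally the same sequence in both. This gives a content-preserving, word-preserving bijection between skew shifted tableaux of skew shifted shape $(\mu+\delta)/\delta$ and content $\lambda$ and skew shifted tableaux of shape $\mu$ and content $\lambda$; because (F1)/(G1) and (F2)/(G2) depend only on $w(T)$, (F1) corresponds to (G1) and (F2) to (G2) under this bijection. Combining with the previous paragraph proves the theorem.

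I do not expect a genuine obstacle: all of the real content is carried by Theorem~\ref{StembridgeLR}, and the rest is the elementary shape identity above. The only points requiring a little care are (i) checking that $\mu+\delta$ is strict and $\mu+\delta\ge\delta$, so that Theorem~\ref{StembridgeLR} is legitimately applicable, and (ii) interpreting ``skew shifted tableau of shape $\mu$'' in the statement as a filling of the ordinary diagram $Y(\mu)$ by the marked alphabet $\{1'<1<2'<2<\dots\}$ satisfying (T1)--(T3) --- which is exactly the object produced by the translation of $sY((\mu+\delta)/\delta)$.
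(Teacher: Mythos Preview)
Your proposal is correct and follows precisely the route the paper indicates: it invokes identity~(\ref{glambdamu}) to reduce $g_{\lambda\mu}$ to $f_{\lambda\delta}^{\mu+\delta}$, applies Theorem~\ref{StembridgeLR}, and observes that $sY((\mu+\delta)/\delta)$ is the straight shape $Y(\mu)$ up to a horizontal shift. The paper does not spell out a proof beyond pointing to this identity and Stembridge's argument, so your write-up is exactly the intended derivation, only more detailed.
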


A skew shifted tableau of skew shifted shape $\nu/\mu$ is said to be {\em standard} if its word is a permutation of the word $12\dots |\nu/\mu|$. The following result can be translated equivalently from Lemma 8.4 in the paper \cite{Stembridge} of Stembridge.
\begin{theo}\label{fbystandard} Let $\lambda,\mu,\nu$ be strict partitions. Choose a standard shifted tableau $\mathcal{T}_\lambda$ of shifted shape $\lambda$. Then the coefficient $f_{\lambda\mu}^\nu$ is the number of standard skew shifted tableaux $S$ of skew shifted shape $\nu/\mu$ such that $sRect(S)=\mathcal{T}_\lambda$. 
\end{theo}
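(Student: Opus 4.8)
The plan is to reduce the rectification description in the statement to Stembridge's lattice-word rule, Theorem~\ref{StembridgeLR}, by passing through standardization and the confluence of shifted jeu de taquin; this is the route underlying Lemma~8.4 of \cite{Stembridge}. Two steps are needed: replace ``semistandard of content $\lambda$'' tableaux by standard ones rectifying to a \emph{fixed} canonical standard tableau of shape $\lambda$, and then upgrade ``fixed canonical'' to ``arbitrary $\mathcal{T}_\lambda$''.

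First I would recall the standardization map of \cite{Stembridge}. Given a skew shifted tableau $T$ of shape $\nu/\mu$ and content $\lambda$ with $|\lambda|=n$, one relabels its entries by $1,\dots,n$ in the total order $1'<1<2'<2<\cdots$, breaking ties among copies of a common value by the standard convention (the marked copies $i'$ get smaller labels than the unmarked $i$'s; the $i'$'s, strictly increasing along rows, are ordered by rows and the $i$'s, strictly increasing down columns, by columns), producing a standard skew shifted tableau $\mathrm{std}(T)$ of the same shape. Let $\mathcal{Y}_\lambda$ be the standard shifted tableau of shape $\lambda$ obtained by filling $sY(\lambda)$ with $1,\dots,n$ row by row (top to bottom, and left to right within each row); it is the standardization of the shifted tableau with all entries of row $i$ equal to $i$, which is itself an LR tableau in the sense of Theorem~\ref{StembridgeLR} and rectifies to itself. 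The claim at this step is that standardization restricts to a bijection from $\{T \text{ of shape } \nu/\mu,\ \text{content } \lambda,\ \text{satisfying (F1) and (F2)}\}$ onto $\{S \text{ standard skew shifted of shape } \nu/\mu : sRect(S)=\mathcal{Y}_\lambda\}$; together with Theorem~\ref{StembridgeLR} this gives $f_{\lambda\mu}^\nu = \#\{S : sRect(S)=\mathcal{Y}_\lambda\}$.

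This bijection is the shifted analogue of the classical fact that a Littlewood-Richardson skew tableau is exactly one whose rectification is superstandard, and establishing it is where the work lies. It rests on: (i) the confluence of shifted jeu de taquin, so that $sRect$ is well defined and, by a shifted analogue of Lemma~\ref{wordrect}, depends only on the reading word (Worley \cite{Worley}, Sagan \cite{Sagan}); (ii) the fact that a single shifted slide preserves the shifted-lattice-word property and, after standardization, the marking normalization (F1), so that the class (F1)$\wedge$(F2) is stable under $sRect$ and only $\mathcal{Y}_\lambda$ can arise as the rectification of such an $S$ among the straight shapes; and (iii) that the destandardization to content $\lambda$ of an $S$ with $sRect(S)=\mathcal{Y}_\lambda$ exists, is a legitimate shifted tableau, and satisfies (F1) and (F2) — which is (ii) read backwards using the reversibility of shifted slides.

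Finally I would remove the dependence on the target. For an arbitrary standard shifted tableau $\mathcal{T}_\lambda$ of shape $\lambda$, fix a sequence of shifted jeu de taquin slides carrying $\mathcal{Y}_\lambda$ to $\mathcal{T}_\lambda$, reversible again by confluence; performing these slides on the inner region while carrying a standard skew tableau $S$ of shape $\nu/\mu$ along in the outer region — the shifted incarnation of the commutation between jeu de taquin and rectification, i.e.\ a shifted tableau-switching argument in the spirit of Theorem~\ref{switchingproperties} — gives a shape-preserving bijection $\{S : sRect(S)=\mathcal{Y}_\lambda\} \leftrightarrow \{S : sRect(S)=\mathcal{T}_\lambda\}$, whence the asserted count is the same for every $\mathcal{T}_\lambda$. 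I expect the main obstacle to be exactly points (i)--(iii) together with this last bijection: the bookkeeping of primed versus unprimed letters under shifted slides and under standardization is delicate, and shifted Knuth equivalence is not generated by the naive analogues of the Knuth relations, so the efficient route is to quote the precise statements from \cite{Stembridge} (\S8), \cite{Worley} and \cite{Sagan} and to verify only that the marking normalization (F1) is the one compatible with the chosen standardization convention.
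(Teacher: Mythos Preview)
The paper does not supply its own proof of this theorem: it is stated immediately after the sentence ``The following result can be translated equivalently from Lemma 8.4 in the paper \cite{Stembridge} of Stembridge,'' and no argument is given. So there is nothing to compare your proposal against beyond that citation.

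Your outline is a faithful unpacking of precisely that reference. The two-step plan --- (a) standardization gives a bijection between the shifted LR tableaux of Theorem~\ref{StembridgeLR} and standard skew tableaux rectifying to the canonical row-reading tableau of shape $\lambda$, and (b) confluence of shifted jeu de taquin makes the count independent of the chosen target $\mathcal{T}_\lambda$ --- is exactly the content of Stembridge's \S8 (built on Worley \cite{Worley} and Sagan \cite{Sagan}). Your points (i)--(iii) and the final switching/confluence bijection are the right ingredients, and you correctly flag that the delicate part is the behaviour of primed letters and the marking condition (F1) under shifted slides, which is where one must invoke the precise statements in \cite{Stembridge} rather than improvise. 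One small caution: Theorem~\ref{switchingproperties} in this paper is the unshifted switching theorem, so when you write ``in the spirit of Theorem~\ref{switchingproperties}'' for the last bijection you are appealing to a shifted analogue that is not stated here; the cleaner route, and the one actually available from the cited sources, is to invoke directly the confluence of shifted jeu de taquin (Worley, Sagan) rather than a shifted switching theorem.
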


\subsection{Geometric Interpretation of the Coefficients \texorpdfstring{$f_{\lambda\mu}^{\nu}$}{TEXT} and \texorpdfstring{$g_{\lambda\mu}$}{TEXT}} 
Let $V$ be a complex vector space of dimension $m+n$. The set $Gr(m,V)$ of linear subspaces of dimension $m$ in $V$ is called a {\em complex Grassmannian}. Fix a complete flag of $V$
\begin{equation*}\label{flagF}
    \mathcal{F}: 0 =V_0 \subset \dots \subset V_i \subset \dots \subset V_{m+n}=V,
\end{equation*}
where each $V_i$ is a vector subspace of $V$ of dimension $i$. To each partition $\lambda=(\lambda_1,\dots,\lambda_m)$ with $\lambda_m \geq 0$, contained in the $m\times n$ rectangle, we associate the Schubert variety
\begin{equation*}\label{SchubertvarX}
  X_\lambda (\mathcal{F}) = \left\lbrace W \in Gr(m,V)\mid \dim(W \cap V_{n+i-\lambda_i}) \geq i\, (1 \leq i \leq m) \right\rbrace.  
\end{equation*}
The Poincare dual class of $X_\lambda(\mathcal{F})$ is denoted by $\sigma_\lambda$ and called a Schubert class. Then $\sigma_\lambda$ is an element of $H^{2|\lambda|}(Gr(m,V))$. We have (see \cite{Fulton})
\begin{equation*}
H^*(Gr(m,V)) =\bigoplus\limits_{\text{$\lambda$ is a partition contained in the $m\times n$ rectangle}} \mathbb{Z}\sigma_\lambda.
\end{equation*}

Now, let $V$ be a complex vector space $V$ of dimension $2n$, endowed with a nondegenerate skew-symmetric bilinear form $\omega$. A subspace $W$ of $V$ is isotropic if the form $\omega$ vanishes on it, i.e., $\omega(v,w)=0$ for all $v,w \in W$. A maximal isotropic subspace of $V$ is called {\em Lagrangian}. The set $LG(n,V)$ of Lagrangian subspaces in $V$ is called the {\em Lagrangian Grassmannian}. Fix a complete isotropic flag of $V$
\begin{equation*}
    \mathcal{L}: 0=V_0 \subset \dots \subset V_i \subset \dots \subset V_n\subset V,
\end{equation*}
where each $V_i$ is a vector subspace of $V$, of dimension $i$ and $V_n$ is Lagrangian. To each strict partition $\lambda =(\lambda_1,\dots,\lambda_l)$ with $\lambda_l>0$, contained in $(n,n-1,\dots,1)$, we associate the Schubert variety 
\begin{equation}\label{SchubertvarY}
   Y_\lambda(\mathcal{L}) = \left\lbrace W \in LG(n,V) \mid \dim(W \cap V_{n+1-\lambda_i}) \geq i \, (1 \leq i \leq l) \right\rbrace. 
\end{equation}
The Poincare dual class of $Y_\lambda(\mathcal{L})$ is denoted by $\theta_\lambda$ and called a Schubert class. Then $\theta_\lambda$ is an element of $H^{2|\lambda|}(LG(n,V))$. We have (see \cite{Pragacz})
\begin{equation*}
H^*(LG(n,V))=\bigoplus\limits_{\lambda \text{ is a strict partition contained in $(n,n-1,\dots,1)$}} \mathbb{Z}\theta_\lambda,
\end{equation*}
and
\begin{equation*}
\theta_\lambda \theta_\mu =\sum\limits_{\nu} 2^{l(\lambda)+l(\mu)-l(\nu)} f_{\lambda\mu}^\nu  \theta_\nu.
\end{equation*}
There is a canonical embedding $\iota: LG(n,V) \rightarrow Gr(n,V)$. The map $\iota$ induces the ring homomorphism $\iota^*:H^*(Gr(n,V)) \rightarrow H^*(LG(n,V))$. For each partition $\mu$ contained in the $n\times n$ rectangle, we have (see \cite{Pragacz2})
\begin{equation}\label{geoglambdamu}
\iota^*(\sigma_\mu) = \sum\limits_{\lambda \text{ is a strict partition contained in }(n,n-1,\dots,1)} g_{\lambda\mu} \theta_\lambda.
\end{equation} 
\subsection{Application to the Identity \texorpdfstring{$g_{\lambda\mu}=g_{\lambda\mu^t}$}{TEXT}}
\begin{prop} \label{gmumut} Let $\lambda$ be a strict partition and let $\mu$ be a partition. Then $g_{\lambda\mu} =g_{\lambda\mu^t}$.
\end{prop}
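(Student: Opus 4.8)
The plan is to give a geometric argument based on the embedding $\iota\colon LG(n,V)\hookrightarrow Gr(n,V)$ that appears in formula (\ref{geoglambdamu}), where $V$ is a $2n$-dimensional complex vector space carrying the symplectic form $\omega$ and $\mu$ is a partition contained in the $n\times n$ rectangle (note that then $\mu^t$ is again contained in the $n\times n$ rectangle, so $\sigma_{\mu^t}$ is defined). Since $\{\theta_\lambda\}$ is a basis of $H^*(LG(n,V))$, the identity (\ref{geoglambdamu}) shows that the assertion $g_{\lambda\mu}=g_{\lambda\mu^t}$ for all $\lambda$ is equivalent to the single cohomological identity $\iota^*(\sigma_\mu)=\iota^*(\sigma_{\mu^t})$ in $H^*(LG(n,V))$. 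I would deduce this by exhibiting an automorphism $\tau$ of $Gr(n,V)$ with the two properties (i) $\tau^*(\sigma_\mu)=\sigma_{\mu^t}$ and (ii) $\tau\circ\iota=\iota$; granting these, $\iota^*(\sigma_{\mu^t})=\iota^*(\tau^*(\sigma_\mu))=(\tau\circ\iota)^*(\sigma_\mu)=\iota^*(\sigma_\mu)$, which is exactly what is needed.

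The automorphism I would use is $\tau(W)=W^{\perp}$, the orthogonal complement taken with respect to $\omega$; equivalently $\tau(W)=\omega^{-1}(\operatorname{Ann}(W))$, where $\omega\colon V\xrightarrow{\sim}V^*$ is the isomorphism determined by the form. Since $\dim W^{\perp}=n$ this is a self-map of $Gr(n,V)$, and it is an algebraic involution, hence an automorphism. Property (ii) is immediate, and this is where the symplectic structure enters: a subspace $W\in Gr(n,V)$ is Lagrangian exactly when it is maximal isotropic, i.e. $W^{\perp}=W$, so $\tau$ restricts to the identity on $LG(n,V)$ and $\tau\circ\iota=\iota$. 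Thus all the real content lies in property (i), which is a statement purely about the ordinary Grassmannian $Gr(n,V)$ and its Schubert classes.

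To prove (i) I would argue at the level of Schubert varieties. Fix a complete flag $\mathcal{F}\colon 0=V_0\subset\cdots\subset V_{2n}=V$; then the orthogonal complements $V'_k:=V_{2n-k}^{\perp}$ form another complete flag $\mathcal{F}'$, with $\dim V'_k=k$ and $V_j^{\perp}=V'_{2n-j}$. Using the elementary identity $\dim(W^{\perp}\cap U)=\dim U-n+\dim(W\cap U^{\perp})$, valid for $W\in Gr(n,V)$ and any subspace $U$, one rewrites the incidence conditions: the requirement $\dim(\tau(W)\cap V_{n+i-\mu_i})\geq i$ becomes $\dim(W\cap V'_{n-i+\mu_i})\geq\mu_i$. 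Collecting these over $1\leq i\leq n$ and deleting the ones implied by others, the surviving conditions are precisely those that cut out $X_{\mu^t}(\mathcal{F}')$ inside $Gr(n,V)$. Hence $\tau^{-1}(X_\mu(\mathcal{F}))=X_{\mu^t}(\mathcal{F}')$, and since the Schubert class does not depend on the chosen flag, passing to the associated cohomology classes (Poincare duals) gives $\tau^*(\sigma_\mu)=\sigma_{\mu^t}$. Alternatively, one can factor $\tau$ through the annihilator isomorphism $Gr(n,V)\cong Gr(n,V^*)$, which is the classical duality that transposes partition indices, followed by the isomorphism $Gr(n,V^*)\cong Gr(n,V)$ induced by $\omega^{-1}$, which merely relabels flags.

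I expect the main obstacle to be the bookkeeping inside the previous paragraph: translating the $n$ rank conditions through $\omega$, determining which of the resulting inequalities are redundant (the essential ones turn out to be indexed by the corners of $\mu$), and checking that they coincide with the defining conditions of $X_{\mu^t}(\mathcal{F}')$ — the matching comes down to the combinatorial identity $\mu^t_{\mu_i}=i$ at each corner $i$ of $\mu$. Everything else — involutivity of $\tau$, the behaviour of $\tau$ on $LG(n,V)$, the flag-independence of Schubert classes, and the final comparison of $\theta_\lambda$-coefficients via (\ref{geoglambdamu}) — is routine.
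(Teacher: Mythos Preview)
Your argument is correct and is essentially the paper's geometric proof, streamlined: the paper uses the annihilator map $\eta\colon Gr(n,V)\to Gr(n,V^*)$, $W\mapsto W^\perp$, and then has to verify separately that $\eta(Y_\lambda(\mathcal{L}))=Y_\lambda(\mathcal{L}^\perp)$ on the Lagrangian side, whereas you compose with $\omega^{-1}$ to obtain the symplectic-orthogonal involution $\tau$ of $Gr(n,V)$, for which the Lagrangian step is trivial since $\tau$ fixes $LG(n,V)$ pointwise. You even note this factorisation yourself; the Schubert-variety computation establishing $\tau^*(\sigma_\mu)=\sigma_{\mu^t}$ is the same in both versions. (The paper also records a one-line algebraic proof via the involution $\omega$ on $\Lambda$ sending $p_r\mapsto(-1)^{r-1}p_r$, which fixes $Q_\lambda$ and sends $s_\mu$ to $s_{\mu^t}$.)
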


The equality has an elementary algebraic proof, which follows almost directly from the algebraic definition of the Schur $Q$-function and is probably well known. Our contribution here is a (more complicated) proof, which relies solely on the geometric interpretation of the coefficients $g_{\lambda\mu}$. The equality is a strong support for Conjecture \ref{bij} which we will see later in Remark \ref{coro_of_conj} of the Section \ref{g2<c}.

\begin{proof}(Algebraic proof)
Consider a standard involution $\omega$ on the algebra of symmetric functions which sends $p_r$ into $(-1)^{r-1}p_r$. Then (see \cite{Macdonald1998}) 
\begin{equation*}
    \omega(s_\lambda) = s_{\lambda^t}.
\end{equation*}
Since $Q_\lambda$ is generated by the power-sums with odd indices, therefore $\omega(Q_\lambda)=Q_\lambda$ and the equality $g_{\lambda\mu} = g_{\lambda\mu^t}$ follows. 
\end{proof}

\begin{proof}(Geometric proof)
Let $V$ be a complex vector space of dimension $2n$, endowed with a nondegenerate skew-symmetric bilinear form $\omega$. For each subspace $W$ of $V$, set 
\begin{align*}
    W^{\perp_\omega} &= \{v' \in V \text{ such that } \omega(v',v)=0 \text{ for all }v \in W\},\\
    W^\perp &= \{ f \in V^* \text{ such that }f(v)=0 \text{ for all }v \in W\}.
\end{align*}
Fix a complete isotropic flag of $V$
\begin{equation*}
    \mathcal{L}: 0=V_0 \subset \dots \subset V_i \subset \dots \subset V_n\subset V.
\end{equation*}
Then we can extend $\mathcal{L}$ to a complete flag $\mathcal{F}$ of $V$ as follows
\begin{equation*}
\mathcal{F}: 0=V_0 \subset \dots \subset V_i \subset \dots \subset V_{2n} =V,
\end{equation*}
where $V_{n+i} = (V_{n-i})^{\perp_\omega}$ for each $i=1,\dots,n$. Moreover, the flag $\mathcal{F}^\perp$ defined below is a complete flag of $V^*$
\begin{equation*}
\mathcal{F}^\perp: 0=(V_{2n})^\perp \subset \dots \subset (V_{2n-i})^\perp \subset \dots \subset (V_0)^\perp = V^*.
\end{equation*}
Then flag $\mathcal{L}^\perp$ defined below is a complete isotropic flag of $V^*$
\begin{equation*}
\mathcal{L}^\perp: 0=(V_{2n})^\perp \subset \dots \subset (V_{2n-i})^\perp \subset \dots \subset (V_n)^\perp \subset V^*.
\end{equation*}

We define an isomorphism $\eta: Gr(n,V) \rightarrow Gr(n,V^*)$ by $W \mapsto W^\perp$. By \cite{Harris}, we know that 
\begin{equation*}
    \eta(X_\mu(\mathcal{F}))=X_{\mu^t}(\mathcal{F}^\perp).
\end{equation*}
Hence, the map $\eta$ induces the ring isomomorphism $\eta^*:H^*(Gr(n,V^*)) \rightarrow H^*(Gr(n,V))$ with 
\begin{equation}\label{imagemu}
\eta^*(\sigma_\mu) = \sigma_{\mu^t}.
\end{equation}
The restriction of $\eta$ on $LG(n,V)$ is also an isomorphism and we still denote it by $\eta$. We have 
\begin{equation*}
    \eta(Y_\lambda(\mathcal{L})) = Y_\lambda(\mathcal{L}^\perp).
\end{equation*}
Indeed, 
\begin{itemize}
    \item[1.] For each $W \in Y_\lambda(\mathcal{L})$, we have $W^{\perp_\omega}=W$, and  
    \begin{equation*}
        \dim(W \cap V_{n+1-\lambda_i})^{\perp_\omega} = 2n-1+\lambda_i - \dim( W \cap V_{n-1+\lambda_i}).
    \end{equation*}
    Then we can rewrite (\ref{SchubertvarY}) as
    \begin{equation}\label{SchubertvarY*}
   Y_\lambda(\mathcal{L}) = \left\lbrace W \in LG(n,V) \mid \dim(W \cap V_{n-1+\lambda_i}) \geq i +\lambda_i-1 \, (1 \leq i \leq l) \right\rbrace. 
\end{equation}
    \item[2.] Now, for any $W \in Y_\lambda(\mathcal{L})$ given by (\ref{SchubertvarY*}), we have
    \begin{equation*}
        \dim(W \cap V_{n-1+\lambda_i})^{\perp} = 2n+1-\lambda_i - \dim( W^\perp \cap V_{n-1+\lambda_i}^\perp).
    \end{equation*}
    Then $W^\perp \in Y_\lambda(\mathcal{L}^\perp)$ given by (\ref{SchubertvarY}). 
\end{itemize}
Hence, the map $\eta$ induces the ring isomorphism $\eta^*:H^*(LG(n,V^*)) \rightarrow H^*(LG(n,V))$ with 
\begin{equation}\label{imagelambda}
    \eta^*(\theta_\lambda) = \theta_\lambda.
\end{equation}
We have 
\begin{equation}\label{commute}
\eta^* \iota^* =\iota^*\eta^*.
\end{equation}  
Apply $\eta^*$ on both sides of the equality (\ref{geoglambdamu}), with the help of (\ref{imagemu}),  (\ref{imagelambda}), (\ref{commute}), we get
\begin{align*}
\eta^*(\iota^*(\sigma_\mu))&= \sum\limits_{\lambda \text{  is a strict partition contained in $(n,n-1,\dots,1)$}} g_{\lambda\mu} \theta_\lambda \\
&= \sum\limits_{\lambda \text{ is a strict partition contained in $(n,n-1,\dots,1)$}}g_{\lambda\mu^t} \theta_\lambda. 
\end{align*}It implies $g_{\lambda\mu}=g_{\lambda\mu^t}$.
\end{proof}
\section{A New Combinatorial Models for the Coefficients \texorpdfstring{$f_{\lambda\mu}^\nu$}{TEXT}}
Given a skew shifted shape $\nu/\mu$, we number the boxes from top to bottom and right to left in each row by $1,2,\dots,|\nu/\mu|$, respectively. The result is called the {\em shifted reverse filling of the skew shifted shape $\nu/\mu$}. We denote it by $\widetilde{T}_{\nu/\mu}$. \\

For each $k =1,2,\dots, |\nu/\mu| $, let $k^*$ to be meant $k$ or $k'$.\\

We now let $\widetilde{\mathcal{O}}(\nu/\mu)$ be the set of all tableaux $T$ of size $|\nu/\mu|$, unshifted shape, labelled by the alphabet $\mathcal{A} = \{1'<1<2'<2<\dots <|\nu/\mu|'<|\nu/\mu|\}$, satisfying the following conditions:
\begin{enumerate}
\item[](C1) If $k$ and $k+1$ appear in the same row of $\widetilde{T}_{\nu/\mu}$, then $(k+1)^*$ appears weakly above $k$ or $(k+1)^*$ appears strictly above $k'$ in $T$. 
\item[](C2) If $h$ appears in the box directly below $k$ in $\widetilde{T}_{\nu/\mu}$, then $h^*$ appears weakly below $k'$ or $h^*$ appears strictly below $k$ in $T$. 
$$\hbox{\begin{tikzpicture}
\draw(0,0)--++(0,-1)--++(2,0)--++(0,1)--cycle;
\draw(1,0)--++(0,-1);
\draw(2.5,-0.5) node[below=0cm,right=0cm]{$\longrightarrow$};
\draw(1,-2)--++(1,0)--++(0,-2)--++(-1,0)--cycle;
\draw(1,-3)--++(1,0);
\draw(2.5,-3) node[above=0cm,right=0cm]{$\longrightarrow$};
\draw[color=orange,fill=orange](4,-1)--++(3,0)--++(0,3)--++(-3,0)--cycle;
\draw[color=orange,fill=orange](8.5,0)--++(3,0)--++(0,2)--++(-3,0)--cycle;
\draw[color=orange,fill=orange](4,-2)--++(3,0)--++(0,-3)--++(-3,0)--cycle;
\draw[color=orange,fill=orange](8.5,-3)--++(3,0)--++(0,-2)--++(-3,0)--cycle;
\draw[fill=white](4.5,-1)--++(1,0)--++(0,1)--++(-1,0)--cycle;
\draw[fill=white](5.5,0.5)--++(1,0)--++(0,1)--++(-1,0)--cycle;
\draw[fill=white](4.5,-4.5)--++(1,0)--++(0,1)--++(-1,0)--cycle;
\draw[fill=white](5.5,-2)--++(1,0)--++(0,-1)--++(-1,0)--cycle;
\draw[fill=white](9,-1)--++(1,0)--++(0,1)--++(-1,0)--cycle;
\draw[fill=white](9.5,0.5)--++(1,0)--++(0,1)--++(-1,0)--cycle;
\draw[fill=white](9.5,-3.5)--++(1,0)--++(0,-1)--++(-1,0)--cycle;
\draw[fill=white](10,-3)--++(1,0)--++(0,1)--++(-1,0)--cycle;
\draw(0,0)node[above=-0.5cm,right=0cm]{$\,^{k+1}$};
\draw(1,0)node[above=-0.5cm,right=0.25cm]{$\,^{k}$};
\draw(1,-2)node[above=-0.5cm,right=0.25cm]{$\,^{k}$};
\draw(1,-3)node[above=-0.5cm,right=0.25cm]{$\,^{h}$};
\draw(4.5,-1)node[above=0.5cm,right=0.25cm]{$\,^{k}$};
\draw(5.5,0.5)node[above=0.5cm,right=-0.15cm]{$\,^{(k+1)^*}$};
\draw(9,-1)node[above=0.5cm,right=0.25cm]{$\,^{k'}$};
\draw(9.5,0.5)node[above=0.5cm,right=-0.15cm]{$\,^{(k+1)^*}$};
\draw(5.5,-2)node[above=-0.5cm,right=0.25cm]{$\,^{k'}$};
\draw(4.5,-4.5)node[above=0.5cm,right=0.25cm]{$\,^{h^*}$};
\draw(10,-3)node[above=0.5cm,right=0.25cm]{$\,^{k}$};
\draw(9.5,-3.5)node[above=-0.5cm,right=0.25cm]{$\,^{h^*}$};
\draw(7.5,0.5)node[above=0cm,right=0cm]{or};
\draw(7.5,-3.5)node[above=0cm,right=0cm]{or};
\end{tikzpicture}}
$$ 
\item[](C3) The rightmost letter in each row of $T$ is unmarked.
\item[](C4) Let $\hat{T}$ be the result of reordering each row of $T$ relatively to the alphabet $\hat{\mathcal{A}}=\{ 1<2<\dots<|\nu/\mu|<|\nu/\mu|'<\dots<2'<1'\}$. Then 
\begin{itemize}
    \item[-]The entries in each column of $\hat{T}$ are increasing.
    \item[-]If the entry $j'$ belongs to the $i$-th row of $\hat{T}$ ($i>1$), and the numbers of entries less than $j$ (with respect to $\hat{\mathcal{A}}$) in the $(i-1)$-th, $i$-th row of $\hat{T}$ are $\tau_{i-1}$, $\tau_i$, respectively, then $\tau_{i-1}>\tau_{i}$.
    \item[-]If the entry $j$ belongs to the $(i-1)$-th row of $\hat{T}$ ($i>1$), and the numbers of entries less than $j'$ (with respect to $\hat{\mathcal{A}}$) in the $(i-1)$-th, $i$-th row of $\hat{T}$ are $\tau_{i-1}$, $\tau_i$, respectively, then $\tau_{i-1} > \tau_{i}$.  
\end{itemize}
\end{enumerate}
\begin{rema} $\,$ 
\begin{itemize}
    \item Since the size of $T$ is $|\nu/\mu|$, only one of $k$ or $k'$ appears in $T$ for each $k =1,2,\dots,|\nu/\mu|$. 
    \item In the condition (C1) for the set $\widetilde{\mathcal{O}}(\nu/\mu)$, $(k+1)^*$ must appear strictly right of $k^*$ in $T$. It is similar to the condition (R1) for the set $\mathcal{O}(\nu/\mu)$. However, in the condition (C2) for the set $\widetilde{\mathcal{O}}(\nu/\mu)$, it is not necessary that $h^*$ appears weakly left of $k^*$ in $T$. It is not similar as the condition (R2) for the set $\mathcal{O}(\nu/\mu)$. Indeed, for $\nu=(3,1)$, $\mu=(1)$, in $\widetilde{T}_{\nu/\mu}$, the entry $2$ is directly above the entry $3$. But $\widetilde{\mathcal{O}}(\nu/\mu)$ contains $$T=\begin{ytableau} 1&2'&3 \end{ytableau}$$ with $3$ is on the right of $2'$.
\end{itemize}
\end{rema}

\begin{example}
We illustrate how the condition (C4) works. Let $T$ be the following tableau
$$
T =\begin{ytableau}
1&2&3&4&5'&8'&10\\
6&7'&9\\
11&12
\end{ytableau}
$$
We have 
$$
\hat{T} =\begin{ytableau}
1&2&3&4&10&8'&5'\\
6&9&7'\\
11&12
\end{ytableau}
$$
We can see that the first and the second sub-condition in (C4) are satisfied. However, the third sub-condition in (C4) is not satisfied. Indeed, the entry $9$ belongs to the second row of $\hat{T}$. The number of entries less than $9'$ in the second row and the third row of $\hat{T}$ are equal two. 
\end{example}
\begin{theo}\label{newf}Let $\lambda,\mu,\nu$ be strict partitions. Then the coefficient $f_{\lambda\mu}^\nu$ is the number of the tableaux $T$ in $\widetilde{\mathcal{O}}(\nu/\mu)$ of shape $\lambda$.
\end{theo}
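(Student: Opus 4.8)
The plan is to build a bijection between the tableaux $T$ in $\widetilde{\mathcal{O}}(\nu/\mu)$ of shape $\lambda$ and the skew shifted tableaux counted by Stembridge's rule (Theorem \ref{StembridgeLR}), namely the skew shifted tableaux of skew shifted shape $\nu/\mu$, content $\lambda$, satisfying (F1) and (F2). The model $\widetilde{\mathcal{O}}(\nu/\mu)$ is patterned on the Remmel--Whitney construction recalled in Section \ref{LRsubsection}, so the correct analog here is the passage from an element $T$ of $\widetilde{\mathcal{O}}_\lambda(\nu/\mu)$ to a word, and from that word to a skew shifted tableau. Concretely, given $T$ of shape $\lambda$, I would read off, for each $k = 1, 2, \dots, |\nu/\mu|$, the row index $x_k$ of the row of $T$ containing the letter $k$ or $k'$, together with a marking bit recording whether that letter is $k$ or $k'$; this produces a word $w = x_{|\nu/\mu|}^{*} \cdots x_1^{*}$ over the marked alphabet. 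The first task is to show there is a \emph{unique} skew shifted tableau $T^{*}$ of skew shifted shape $\nu/\mu$ with $w(T^{*}) = w$, exactly as in step 3 of the Remmel--Whitney recipe; this uses that the underlying shape $\nu/\mu$ is fixed and that the shifted reverse filling $\widetilde{T}_{\nu/\mu}$ dictates the cell-by-cell placement.

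The next step is to check that the three structural requirements match up under $T \leftrightarrow T^{*}$. The content of $T^{*}$ is $\lambda$ precisely because $T$ has shape $\lambda$ (the number of cells of $T^{*}$ labelled $i$ or $i'$ equals the number of cells in row $i$ of $T$). Condition (C3), that the rightmost letter of each row of $T$ is unmarked, should translate into (F1), the condition that the leftmost occurrence of $\{i, i'\}$ in $w(T^{*})$ is unmarked: the rightmost letter of row $i$ of $T$ corresponds, under the reverse numbering in $\widetilde{T}_{\nu/\mu}$, to the last-read, hence leftmost-in-$w$, letter of that row. The heart of the matter is the equivalence
\begin{equation*}
T \text{ satisfies (C1), (C2), (C4)} \iff w(T^{*}) \text{ is a shifted lattice word.}
\end{equation*}
Here (C1) and (C2) encode the row-adjacency and column-adjacency constraints of the shifted reverse filling, i.e. the Knuth-type moves that guarantee $w$ arises as the reading word of a \emph{valid} skew shifted tableau of the prescribed shape (this is where the asymmetry noted in the Remark, that $h^{*}$ need not lie weakly left of $k^{*}$, is forced by the shifted column-strictness rules (T2), (T3)). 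Condition (C4), read through the reordering $\hat{T}$ with respect to $\hat{\mathcal{A}}$, should be precisely a repackaging of the shifted-lattice-word inequalities $m_i(j) = m_{i-1}(j)$ together with the two exceptional clauses on $w_{n-j}$ and $w_{j-n+1}$; the column-increasing condition on $\hat{T}$ gives the $j \le n$ half, and the two $\tau_{i-1} > \tau_i$ clauses (one for a primed entry in row $i$, one for an unprimed entry in row $i-1$) encode the $n \le j < 2n$ half, where both the $i'$-suffix multiplicities and the marking-sensitive exception enter.

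I expect the main obstacle to be the careful verification of this last equivalence, in both directions and uniformly over all $i$ and $j$: one must translate Stembridge's two-sided multiplicity statistics $m_i(j)$ — which mix a suffix read bottom-to-top with a prefix of primed letters read top-to-bottom — into purely local, row-by-row statements about $\hat{T}$, and conversely show that the somewhat opaque $\tau_{i-1} > \tau_i$ clauses of (C4) capture exactly the marked exceptional cases. A clean way to organize this is to first establish the correspondence $T \leftrightarrow T^{*}$ and the translation of (C1)--(C2) into ``$w(T^{*})$ is the reading word of a skew shifted tableau of shape $\nu/\mu$'' as a standalone lemma (the shifted analog of the Remmel--Whitney shape bijection), then prove separately that for such words, the conjunction of (C3) and (C4) is equivalent to (F1) and (F2). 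One should also double-check the degenerate cases $i = 1$ (no row above, so the (C4) clauses are vacuous, matching that $m_0(j) = 0$ never equals a positive multiplicity) and the boundary indices $j = n$, where the two halves of the definition of $m_i(j)$ meet. Once these pieces are in place, Theorem \ref{StembridgeLR} finishes the proof.
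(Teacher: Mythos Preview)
Your plan is essentially the paper's proof: the same bijection (read off the row index and marking of $k^*$ in $T$ to produce the word of a skew shifted tableau of shape $\nu/\mu$), followed by a condition-by-condition dictionary matching (C1)--(C4) with (T1)--(T3), (F1), (F2). The paper carries this out exactly as you outline, so the strategy is sound.

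One point to correct in your sketch of the (C4) $\Leftrightarrow$ (F2) translation: the three sub-clauses of (C4) do not split along the $j\le n$ versus $n\le j<2n$ line the way you suggest. The column-increasing condition on $\hat T$ is what forces the running shape $\tau$ to be a partition at \emph{every} stage (both halves of the lattice scan), while the two $\tau_{i-1}>\tau_i$ clauses correspond, respectively, to the $0\le j<n$ implication (primed entry $j'$ in row $i$) and to the $n\le j<2n$ implication (unprimed entry $j$ in row $i-1$). With that reallocation the verification goes through cleanly, and the rest of your plan matches the paper's argument.
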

\begin{proof}
Let $\widetilde{\mathcal{S}}_\lambda(\nu/\mu)$ be the set of tableaux in Theorem \ref{StembridgeLR}. Let $\widetilde{\mathcal{O}}_\lambda(\nu/\mu)$ be the set of tableaux in the set $\widetilde{\mathcal{O}}(\nu/\mu)$ of shape $\lambda$. \\

Let $T \in \widetilde{\mathcal{S}}_\lambda(\nu/\mu)$ with $w(T)=w_1w_2\dots w_{|\nu/\mu|}$. We associate $T$ with a unique tableau $T'$ of unshifted shape by the rules: For each $i=|\nu/\mu|,\dots,2,1$, we have
\begin{itemize}
\item[-]If $w_i=k$, then $|\nu/\mu|+1-i$ appears in the $k$-th row of $T'$. 
\item[-]If $w_i=k'$, then $(|\nu/\mu|+1-i)'$ appears in the $k$-th row of $T'$.  
\end{itemize}
We can easily check that $T'\in \widetilde{\mathcal{O}}_\lambda(\nu/\mu)$. Indeed,
\begin{itemize}
\item[-]$T$ has content $\lambda$ if and only if $T'$ has shape $\lambda$.
\item[-]The combination of conditions (T1), (T2) on $T$ is equivalent to the condition (C1) on $T'$ as follows: Suppose that $(k+1)^*$ belongs to the $x$-th row of $T'$ and $k^*$ belongs to the $y$-th row of $T'$, then it is equivalent to say that $w_{(|\nu/\mu|+1)-(k+1)}=x^*$ and $w_{(|\nu/\mu|+1)-k}=y^*$. We see that $w_{(|\nu/\mu|+1)-(k+1)}, w_{(|\nu/\mu|+1)-k}$ are in the same row of $T$ if and only if $k+1, k$ are in the same row of $\widetilde{T}_{\nu/\mu}$. In this case, the combination of conditions (T1), (T2) on $w_{(|\nu/\mu|+1)-(k+1)}, w_{(|\nu/\mu|+1)-k}$ says that $x^* \leq y^*$ and $x^*<y'$ if $y^*=y'$ (with respect to $\mathcal{A}$). It is equivalent to the conditions $x \leq y$ and $x < y$ if $k^* = k$. Hence the combinations of (T1), (T2) on $T$ is equivalent to the condition (C1) on $T'$. 

$$
{\hbox{\begin{tikzpicture}
\draw (0,0)--++(0,-0.5)--++(0.5,0)--++(0,-0.5)--++(0.5,0)--++(0,-0.5)--++(0.5,0)--++(0,-0.5)--++(0.5,0)--++(0,-0.5)--++(1.5,0)--++(0,2)--++(0.5,0)--++(0,0.5)--cycle node[below=1.3cm, right=2cm]{$x^* \quad y^*$} node[below=1.3cm, right=-1cm]{$T=$}  ;
\draw[fill=yellow](2,0)--++(0,-0.5)--++(-0.5,0)--++(0,-0.5)--++(-1,0)--++(0,0.5)--++(-0.5,0)--++(0,0.5)--cycle;
\end{tikzpicture}}}
\quad 
{\hbox{\begin{tikzpicture}
\draw (0,0)--++(0,-0.5)--++(0.5,0)--++(0,-0.5)--++(0.5,0)--++(0,-0.5)--++(0.5,0)--++(0,-0.5)--++(0.5,0)--++(0,-0.5)--++(1.5,0)--++(0,2)--++(0.5,0)--++(0,0.5)--cycle node[below=1.3cm, right=1.5cm]{$k+1 \quad k$} node[below=1.3cm, right=-1.5cm]{$\widetilde{T}_{\nu/\mu}=$};
\draw[fill=yellow](2,0)--++(0,-0.5)--++(-0.5,0)--++(0,-0.5)--++(-1,0)--++(0,0.5)--++(-0.5,0)--++(0,0.5)--cycle;
\end{tikzpicture}}}
$$

\item[-] The combination of conditions (T1), (T3) on $T$ is equivalent to the condition (C2) on $T'$ as follows: Suppose that $k^*$ belongs to the $x$-th row of $T'$ and $h^*$ belongs to the $y$-th row of $T'$, then it is equivalent to say that $w_{(|\nu/\mu|+1)-k}=x^*$ and $w_{(|\nu/\mu|+1)-h}=y^*$. We see that $w_{(|\nu/\mu|+1)-k}$ is directly above $w_{(|\nu/\mu|+1)-h}$ in $T$ if and only if $k$ is directly above $h$ in $\widetilde{T}_{\nu/\mu}$. In this case, the combination of conditions (T1), (T3) on $w_{(|\nu/\mu|+1)-k}, w_{(|\nu/\mu|+1)-h}$ says that $x^* \leq y^*$ and $x<y^*$ if $x^* = x$ (with respect to $\mathcal{A}$). It is equivalent to the condition $x \leq y$ and $x < y$ if $k^* = k$. Hence the combinations of (T1), (T3) on $T$ is equivalent to the condition (C2) on $T'$.

$$
{\hbox{\begin{tikzpicture}
\draw (0,0)--++(0,-0.5)--++(0.5,0)--++(0,-0.5)--++(0.5,0)--++(0,-0.5)--++(0.5,0)--++(0,-0.5)--++(0.5,0)--++(0,-0.5)--++(1.5,0)--++(0,2)--++(0.5,0)--++(0,0.5)--cycle node[below=0.8cm, right=2.5cm]{$x^*$} node[below=1.5cm, right=2.5cm]{$y^*$} node[below=1.3cm, right=-1cm]{$T=$};
\draw[fill=yellow](2,0)--++(0,-0.5)--++(-0.5,0)--++(0,-0.5)--++(-1,0)--++(0,0.5)--++(-0.5,0)--++(0,0.5)--cycle;
\end{tikzpicture}}}
\quad 
{\hbox{\begin{tikzpicture}
\draw (0,0)--++(0,-0.5)--++(0.5,0)--++(0,-0.5)--++(0.5,0)--++(0,-0.5)--++(0.5,0)--++(0,-0.5)--++(0.5,0)--++(0,-0.5)--++(1.5,0)--++(0,2)--++(0.5,0)--++(0,0.5)--cycle node[below=0.8cm, right=2.5cm]{$k$} node[below=1.5cm, right=2.5cm]{$h$} node[below=1.3cm, right=-1.5cm]{$\widetilde{T}_{\nu/\mu}=$};
\draw[fill=yellow](2,0)--++(0,-0.5)--++(-0.5,0)--++(0,-0.5)--++(-1,0)--++(0,0.5)--++(-0.5,0)--++(0,0.5)--cycle;
\end{tikzpicture}}}
$$

\item[-]The condition (F1) of $T$ is equivalent to the condition (C3) of $T'$.
\item[-]The condition (F2) of $T$ is equivalent to the conditions (C4) of $T'$. Indeed, we first translate the condition $w(T)$ is a shifted lattice word to equivalence conditions on $T'$:
\begin{itemize}
    \item[a1.] For each $j=1,\dots, |\nu/\mu| -1$, suppose that the shape of $T'$ after deleting entries $k'$ and $k>j$ is $\tau=(\tau_1,\tau_2,\dots)$, then $m_i(j) = \tau_i$ and $m_{i-1}(j) = \tau_{i-1}$. Since $w_{(|\nu/\mu|+1)-j} = x^*$ if and only if $j^*$ belongs to the $x$-th row of $T'$, the condition $$m_i(j)=m_{i-1}(j) \Rightarrow w_{|\nu/\mu|-j} \ne i,i'$$ is equivalent to  the conditions that $\tau$ is a partition and if $\tau_{i-1}=\tau_i$ for some $i>1$, then $(j+1)'$ does not belong to the $i$-th row of $T'$. 
    \item[b1.] For each $j=|\nu/\mu|,\dots,2|\nu/\mu|-1$, suppose that the shape of $T'$ after deleting entries $k'$ for $k\leq 2|\nu/\mu|-j$ is $\tau=(\tau_1,\tau_2,\dots)$, then $m_i(j) = \tau_i$ and $m_{i-1}(j) = \tau_{i-1}$. Since $w_{(|\nu/\mu|+1)-j} = x^*$ if and only if $j^*$ belongs to the $x$-th row of $T'$, then the condition
\begin{equation*}
    m_i(j)=m_{i-1}(j) \Rightarrow w_{j-|\nu/\mu|+1} \ne i-1,i'
\end{equation*}
is equivalent the conditions that $\tau$ must be a partition and if $\tau_{i-1}=\tau_{i}$ for some $i>1$, then $2|\nu/\mu|-j$ does not belong to the $(i-1)$-th row of $T'$. 
\end{itemize}
Now, we need to translate the conditions above on $T'$ to the conditions on $\hat{T'}$:
\begin{itemize}
    \item[a2.] The conditions $\tau$ is a partition in both cases is rewritten shortly by $\hat{T'}$ having strictly increasing entries (with respect to $\hat{\mathcal{A}}$) in rows and columns. Since the entries in each row of $\hat{T'}$ are strictly increasing, we just need the condition on columns. We have obtained the first sub-condition of (C4).
    \item[b2.] For each $j = 1, \dots, |\nu/\mu|-1$, we can restate the condition in a1: ``if $\tau_{i-1} = \tau_i$ for some $i>1$ then $(j+1)'$ does not belong to the $i$-th row of $T'$" to ``if $(j+1)'$ belongs to the $i$-th row of $T'$ for some $i>1$, then $\tau_{i-1}>\tau_i$". When we replace $(j+1)'$ by $j'$, then $\tau_{i-1}, \tau_i$ are the numbers of entries less than $j$ (with respect to $\hat{\mathcal{A}}$) in the $(i-1)$-th, $i$-th row of $\hat{T'}$, respectively. We have obtained the second sub-condition of (C4).
    \item[c2.] For each $j = |\nu/\mu|, \dots, 2|\nu/\mu|-1$, we can state the condition in b1: ``if $\tau_{i-1}=\tau_{i}$ for some $i>1$, then $2|\nu/\mu|-j$ does not belong to the $(i-1)$-th row of $T'$" to ``if $2|\nu/\mu|-j$ belongs to the $(i-1)$-th row of $T'$ for some $i>1$, then $\tau_{i-1}>\tau_i$". When we replace $2|\nu/\mu|-j$ by $j$, then $\tau_{i-1}, \tau_i$ are the numbers of entries less than $j'$ (with respect to $\hat{\mathcal{A}}$) in the $(i-1)$-th, $i$-th row of $\hat{T'}$, respectively. We have obtained the last sub-condition of (C4). 
\end{itemize}
    
\end{itemize}Hence, we can define an injection $\phi:\widetilde{\mathcal{S}}_\lambda(\nu/\mu)\rightarrow \widetilde{\mathcal{O}}_\lambda(\nu/\mu)$, $T\mapsto T'$. \\

Moreover, for each $T' \in \widetilde{\mathcal{O}}_\lambda(\nu/\mu)$, we associate $T'$ with a unique tableau $T$ of skew shifted shape $\nu/\mu$ and word $w(T)=w_1w_2\dots w_{|\nu/\mu|}$ by the rule:  for each $j=|\nu/\mu|,\dots,2,1$, we have
\begin{itemize}
\item[-]If $j$ appears in the $k$-th row of $T'$, then $w_{|\nu/\mu|+1-j}=k$.
\item[-]If $j'$ appears in the $k$-th row of $T'$, then $w_{|\nu/\mu|+1-j}=k'$.
\end{itemize}
The equivalence of the conditions we have already shown implies that $T\in \widetilde{\mathcal{S}}_\lambda(\nu/\mu)$. So we can define an injection $\psi:\widetilde{\mathcal{O}}_\lambda(\nu/\mu) \rightarrow \widetilde{\mathcal{S}}_\lambda(\nu/\mu), T'\mapsto T$. Moreover, $\phi\psi = Id$. Hence, $\phi$ is a bijection and $f_{\lambda\mu}^\nu =\#\widetilde{\mathcal{S}}_\lambda(\nu/\mu) =\#\widetilde{\mathcal{O}}_\lambda(\nu/\mu)$.
\end{proof}
\begin{theo}\label{newg}Let $\lambda$ be a strict partition and let $\mu$ be a partition. Then the coefficient $g_{\lambda\mu}$ is the number of the tableaux $T$ in $\widetilde{\mathcal{O}}(\mu+\delta/\delta)$ of shape $\lambda$. 
\end{theo}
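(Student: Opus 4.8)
The plan is to deduce Theorem~\ref{newg} directly from Theorem~\ref{newf} together with the identity $g_{\lambda\mu}=f_{\lambda\delta}^{\mu+\delta}$ recorded in~(\ref{glambdamu}), where $\delta=(l,l-1,\dots,1)$ with $l=l(\mu)$ and $\mu+\delta=(\mu_1+l,\mu_2+l-1,\dots,\mu_l+1)$. No new combinatorics is required: the entire content is to check that the triple $(\lambda,\delta,\mu+\delta)$ is admissible for Theorem~\ref{newf} and then read off the statement.

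First I would verify the admissibility. The sequence $\delta=(l,l-1,\dots,1)$ is a strict partition by definition. Next, $\mu+\delta$ is strict: since $\mu_1\geq\mu_2\geq\cdots\geq\mu_l\geq 0$ while $l>l-1>\cdots>1$, adding componentwise gives $\mu_1+l>\mu_2+(l-1)>\cdots>\mu_l+1\geq 1$. Moreover $\mu+\delta\geq\delta$ componentwise, because each $\mu_i\geq 0$, so the skew shifted diagram $sY((\mu+\delta)/\delta)$ and hence the set $\widetilde{\mathcal{O}}((\mu+\delta)/\delta)$ are well defined; here $\widetilde{\mathcal{O}}(\mu+\delta/\delta)$ in the statement is just shorthand for $\widetilde{\mathcal{O}}((\mu+\delta)/\delta)$. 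Since $\lambda$ is strict by hypothesis, $(\lambda,\delta,\mu+\delta)$ is a triple of strict partitions to which Theorem~\ref{newf} applies.

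Then I would apply Theorem~\ref{newf} to this triple, which yields that $f_{\lambda\delta}^{\mu+\delta}$ equals the number of tableaux $T$ in $\widetilde{\mathcal{O}}((\mu+\delta)/\delta)$ of shape $\lambda$. Combining this equality with $g_{\lambda\mu}=f_{\lambda\delta}^{\mu+\delta}$ from~(\ref{glambdamu}) gives the claim.

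There is essentially no obstacle here; the statement is a corollary of Theorem~\ref{newf}. The only points worth a sentence are the verification that $\delta$ and $\mu+\delta$ are genuinely strict partitions with $\mu+\delta\geq\delta$ (so that Theorem~\ref{newf} literally applies), and the bookkeeping convention that $l$ is taken to be the true length $l(\mu)$, matching the setting in which~(\ref{glambdamu}) is stated in~\cite{Stembridge} (trailing zeros in $\mu$ would alter $\delta$). One may also remark in passing that both sides vanish unless $|\lambda|=|\mu|$: indeed $|\mu+\delta|=|\mu|+|\delta|$, and $f_{\lambda\delta}^{\mu+\delta}=0$ unless $|\lambda|+|\delta|=|\mu+\delta|$, which is consistent with $g_{\lambda\mu}$ being defined precisely when $|\lambda|=|\mu|$.
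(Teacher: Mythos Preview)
Your proof is correct and follows exactly the same route as the paper: the paper's proof is the single sentence ``This follows from Theorem~\ref{newf} and identity~(\ref{glambdamu}),'' and your additional verifications (strictness of $\delta$ and $\mu+\delta$, $\mu+\delta\geq\delta$) merely make explicit the admissibility check implicit in that citation.
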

\begin{proof}
This follows from Theorem \ref{newf} and identity (\ref{glambdamu}).
\end{proof}

We illustrate the method to compute the coefficients $f_{\lambda\mu}^\nu$ through an example.
\begin{example}\label{example_f}
Set $\lambda=(3,2),\mu=(3,2),\nu=(5,3,2).$\\
(1) The shifted reverse filling of the skew shifted shape $\nu/\mu$ is 
$$
\widetilde{T}_{\nu/\mu} = \begin{ytableau}
*(yellow)&*(yellow)&*(yellow)&2&1\\
\none &*(yellow)&*(yellow)&3\\
\none & \none & 5&4
\end{ytableau}
$$
(2) To construct the tableaux $T'$ in $\widetilde{\mathcal{O}}_{\lambda}(\nu/\mu)$, we first use three conditions (C1), (C2) and (C3). Then we check the results if they satisfy the condition (C4) or not.
\begin{itemize}
\item[1.]We start with $1^*$, there are two possibilities, they are $1'$ and $1$. But if $1'$ appears in the tableau $T'$, then the next position of $2^*$ will be in the row above the first row by the condition (C1). It is impossible. Hence, just only one case that $1$ appears in $T'$. Then the next two possibilities by the condition (C1) are 
$$
\begin{ytableau}
1&2
\end{ytableau}
\quad \quad
\begin{ytableau}
1&2'
\end{ytableau}
$$
\item[2.] For the second case, by the condition (C2), there are four possibilities below
$$
\begin{ytableau}
1&2'\\
3'
\end{ytableau}
\quad\quad
\begin{ytableau}
1&2'\\
3
\end{ytableau}
\quad\quad
\begin{ytableau}
1&2'&3
\end{ytableau}
\quad\quad
\begin{ytableau}
1&2'&3'
\end{ytableau}
$$
\begin{itemize}
\item[-]The last one cannot happen since the tableau $T'$ has shape $\lambda=(3,2)$. Then we consider $3'$ as the rightmost letter in the first row of $T'$ and it should be $3$ to satisfy the condition (C3).
\item[-]The second one also cannot happen because the next position of $4^*$ will be in the row below the row of $3$ by the condition (C2). It cannot produce a tableau of shape $\lambda=(3,2)$ later. 
\item[-]For the third one, the next position of $4^*$ is based on the condition (C2). To produce the shape $\lambda =(3,2)$ later, it will be as follows:
$$
\begin{ytableau}
1&2'&3\\
4
\end{ytableau}\quad\quad
\begin{ytableau}
1&2'&3\\
4'
\end{ytableau}
$$
\item[-]For the first one, the next position of $4^*$ is based on the conditions (C2) and (C3). To produce the shape $\lambda =(3,2)$ later, will be as follows:
$$
\begin{ytableau}
1&2'\\
3'&4
\end{ytableau}
$$
\end{itemize}
\end{itemize}
Continue until the end on the remaining cases by similar arguments, we finally can find the tableaux of shape $\lambda=(3,2)$ satisfying all conditions (C1), (C2) and (C3) as follows: 
$$
\begin{ytableau}
1&2&5\\
3'&4
\end{ytableau}
\quad\quad
\begin{ytableau}
1&2'&5\\
3'&4
\end{ytableau}
\quad\quad
\begin{ytableau}
1&2'&3\\
4&5
\end{ytableau}
$$
We can check that only first two tableaux above satisfy the condition (C4). Hence, 
$$f_{\lambda\mu}^\nu = 2.$$We automatically find out the set of skew shifted tableaux described in Theorem \ref{StembridgeLR} by using the bijection we mentioned in the proof of Theorem \ref{newf}. Here they are
$$
\begin{ytableau}
*(yellow)&*(yellow)&*(yellow)&1&1\\
\none &*(yellow)&*(yellow)&2'\\
\none &\none &1&2
\end{ytableau} \quad \quad
\begin{ytableau}
*(yellow)&*(yellow)&*(yellow)&1'&1\\
\none &*(yellow)&*(yellow)&2'\\
\none &\none &1&2
\end{ytableau}
$$ 
(3) The set $\widetilde{\mathcal{O}}(\nu/\mu)$ is constructed by the tree in the Appendix as follows:
\begin{itemize}
    \item[1.] We start with one node $1$.
    \item[2.] From one node $T$ with entries $1^*, \dots, k^*$, we set its child to be all possibilities of adding the next entry $(k+1)^*$ to $T$, which satisfies the conditions (C1), (C2). The process is repeated on new nodes until we can not create a new child.
    \item[3.] Finally, we eliminate all branches that end up with a node which does not satisfy the conditions (C3), (C4). The set of all ending nodes of the tree is the set $\widetilde{\mathcal{O}}(\nu/\mu)$.
\end{itemize}

\end{example}
\begin{rema}
The model of Remmel and Whitney for Littlewood-Richardson coefficients \cite{RemmelWhitney}, i.e., the set $\mathcal{O}_\mu(\nu/\lambda)$ in Theorem \ref{LR}, has another interpretation by White \cite{White}. Namely, each tableau in the set $\mathcal{O}_\mu(\nu/\lambda)$ can be considered as the recording tableau of the word rewritten in inverse order of a tableau in the set $\mathcal{S}(\nu/\lambda,\mathcal{U}_\mu)$. Our model, i.e., the set $\widetilde{\mathcal{O}}_\lambda(\nu/\mu)$, is analogous to Remmel and Whitney's model but for shifted Littlewood-Richardson coefficients. In \cite{Shimozono}, Shimozono gave an analogous model to the White's model in \cite{White} but for shifted Littlewood-Richardson coefficients. Our model and Shimozono's model are totally different. For example, with $\lambda=(3,2)$, $\mu=(3,2)$ and $\nu=(5,3,2)$, our model $\widetilde{\mathcal{O}}_\lambda(\nu/\mu)$ consists of the elements below
$$
\begin{ytableau}
1 &2&5\\
3'&4
\end{ytableau}
\quad\quad
\begin{ytableau}
1&2'&5\\
3'&4
\end{ytableau}
$$
However, Shimozono's model consists of the elements below
$$
\begin{ytableau}
1&2&4'\\
\none&3&5
\end{ytableau}
\quad \quad
\begin{ytableau}
1&2&4'\\
\none&3&5'
\end{ytableau}
$$
In conclusion, the Shimozono's model is not the same as our model.
\end{rema}

\section{On the Coefficients \texorpdfstring{$g_{\lambda\mu}$}{TEXT}}\label{glambdamusection}

In this section, we present our second result. Namely, we present a new interpretation of the coefficients $g_{\lambda\mu}$ as a subset of a set that counts Littlewood-Richarson coefficients. As corollaries, we can compute the coefficients $g_{\lambda\mu}$ by models for Littlewood-Richardson coefficients. We will prove and conjecture inequalities between the coefficients and also state some conjectures that explain the hidden structure behind them.
\subsection{A New Interpretation of the Coefficients \texorpdfstring{$g_{\lambda\mu}$}{TEXT}}
For any (skew) shifted tableau $T$ without entries $k'$, let $s(T)$ be the new (skew) tableau which is defined as follows:
\begin{itemize}
\item[1.]Create an image of $T$ by the symmetry through its main diagonal.
\item[2.]Combine the image we have created with $T$ by gluing them along the main diagonal as the image below. 
$$
{\hbox{\begin{tikzpicture}
\draw (0,0)--++(0,-3.5)--++(0.5,0)--++(0,0.5)--++(2,0)--++(0,0.5)--++(1,0) --++(0,2)--++(0.5,0)--++(0,0.5)--++(-3.5,0) node[below=1.3cm, right=2cm]{$T$};
\draw[fill=yellow](0,-2)--++(0.5,0)--++(0,0.5)--++(0.5,0)--++(0,0.5)--++(1,0)--++(0,0.5)--++(0.5,0)--++(0,0.5)--++(-2.5,0)--cycle; 
\draw (0.5,0) --++ (0,-0.5)--++ (0.5,0) --++ (0,-0.5)--++(0.5,0) --++ (0,-0.5)--++(0.5,0) --++(0,-0.5)--++(0.5,0)--++(0,-0.5);  
\end{tikzpicture}}}
$$ 
\end{itemize}
Let $\nu/\mu$ be the skew shifted shape of $T$. We denote by $\widetilde{\nu/\mu}$ the shape of $s(T)$.
\begin{example}
$$\text{If }T= \begin{ytableau}
*(yellow)&*(yellow)&*(yellow)&1&4\\
\none &*(yellow)&*(yellow)&3\\
\none &\none &2&5
\end{ytableau} \text{ then }
s(T) =\begin{ytableau}
*(yellow)&*(yellow)&*(yellow)&*(yellow)&1&4\\
*(yellow)&*(yellow)&*(yellow)&*(yellow)&3\\
*(yellow)&*(yellow)&2&2&5\\
1&3&5\\
4
\end{ytableau}
$$
\end{example}

The following result is a restatement of Proposition 5.4 in the paper \cite{Haiman}.

\begin{prop}\label{coefffrect} Let $T$ be a skew shifted tableau without entries $k'$. Then we have $s(sRect(T))=Rect(s(T))$. 
\end{prop}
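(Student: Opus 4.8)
The plan is to reduce the identity to a single elementary slide and then induct. Both sides are built from elementary slides: $sRect(T)$ is obtained from $T$ by a sequence of shifted jeu de taquin slides, and $Rect(s(T))$ from $s(T)$ by a sequence of ordinary jeu de taquin slides, and both $sRect$ and $Rect$ are independent of the order in which inner corners are slid out. So I would argue by induction on $|\mu|$, the size of the inner shape of the skew shifted shape $\nu/\mu$ of $T$. For the base case $\mu=\emptyset$, the tableau $T$ already has straight shifted shape, so $sRect(T)=T$, and it remains only to check that $s(T)$ is then a genuine semistandard Young tableau of straight shape, so that $Rect(s(T))=s(T)=s(sRect(T))$; this is a direct verification from the definition of $s$ together with the shifted tableau conditions (T1)--(T3).

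For the inductive step I would fix an inner corner $b=(i,j)$ (with $i\le j$) of $sY(\nu/\mu)$ and let $T_1$ be the result of sliding $b$ out of $T$ by shifted jeu de taquin, so that the inner shape of $T_1$ has size $|\mu|-1$ and $sRect(T_1)=sRect(T)$. By the induction hypothesis, $s(sRect(T))=s(sRect(T_1))=Rect(s(T_1))$, so it suffices to prove $Rect(s(T_1))=Rect(s(T))$. The key step is to exhibit $s(T_1)$ as the result of a sequence of ordinary jeu de taquin slides applied to $s(T)$: since $Rect$ does not depend on the order of slides, this gives the desired equality. Concretely, when $i<j$ the cell $b$ and its mirror $\sigma(b)=(j,i)$ are both inner corners of the doubled skew shape $\widetilde{\nu/\mu}$, one in the part coming from $T$ and one in its reflection; performing the ordinary slide out of $\sigma(b)$ and then out of the image of $b$ in $s(T)$ should reproduce $s(T_1)$ exactly, with the two slide paths mirror images of one another, so the symmetry across the diagonal is preserved. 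When $i=j$, the diagonal cell $b$ corresponds to the pair of cells of $\widetilde{\nu/\mu}$ straddling the diagonal, and the special diagonal rule of the shifted slide must be unfolded into a short, prescribed sequence of ordinary slides running along the shifted slide path and its reflection, starting from the two straddling cells; again the net effect should be $s(T_1)$.

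I expect the diagonal case of this single-slide compatibility to be the main obstacle: one has to verify that the prescribed ordinary slides are legal at every intermediate stage (each intermediate filling is a valid skew tableau), that they admit a consistent ordering, and — crucially — that the composite preserves the symmetry across the diagonal, so that its output is again of the form $s(\cdot)$. This is precisely the content of \cite[Proposition 5.4]{Haiman}, whose argument I would follow for the slide-compatibility lemma and then assemble into the induction above. As a fallback, in case the slide bookkeeping near the diagonal becomes unwieldy, I would instead try to characterize $Rect(s(T))$ via its reading word using Lemma \ref{wordrect} together with a shifted-plactic description of $sRect$, relating the reading word of $s(T)$ to that of $T$; but I expect the slide argument to be the cleaner route.
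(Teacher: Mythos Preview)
The paper does not give a proof of this proposition at all: it simply records it as a restatement of \cite[Proposition~5.4]{Haiman} and uses it as a black box. Your proposal sketches the standard inductive argument (reduce to a single slide, handle the off-diagonal and diagonal cases separately, and observe that the doubled slides preserve the symmetry), and you yourself defer the crucial slide-compatibility step to \cite[Proposition~5.4]{Haiman}. So your approach and the paper's are aligned in that both ultimately rest on Haiman's result; you have just unpacked more of the mechanism than the paper does.

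One small comment on your sketch: in the off-diagonal case $i<j$, the two mirror-image cells $b$ and $\sigma(b)$ are not simultaneously inner corners of $\widetilde{\nu/\mu}$ in general; rather, one of them (say $\sigma(b)$, lying strictly below the diagonal) is an inner corner, and only after sliding it out does the other become an inner corner. Your description already orders the slides (``$\sigma(b)$ and then the image of $b$''), so the argument goes through, but the phrasing ``are both inner corners'' is slightly misleading. The diagonal case is, as you note, where the real work lies, and Haiman's argument handles exactly this bookkeeping.
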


For any strict partition $\lambda$ of $n$, let $\mathcal{T}_\lambda$ be the shifted tableau of shifted shape $\lambda$, obtained by putting numbers $1, 2, \dots, n$ in the boxes of shifted diagram $sY(\lambda)$ from left to right, starting from top to bottom. We recall that $\mathcal{T}(\mu^t,\mu,s(\mathcal{T}_\lambda))$ is the set of all pairs of Young tableaux $(T,U)$ of shape $\mu^t, \mu$, respectively and $T.U=s(\mathcal{T}_\lambda)$. Let $\overline{\mathcal{T}(\mu^t,\mu,s(\mathcal{T}_\lambda))}$ be the subset of $\mathcal{T}(\mu^t,\mu,s(\mathcal{T}_\lambda))$ of all pairs $(T,U)$ such that $T=U^t$. 
\begin{theo}\label{gTtT}
Let $\lambda$ be a strict partition and let $\mu$be a partition. Then 
$g_{\lambda\mu} = \# \overline{\mathcal{T}(\mu^t,\mu,s(\mathcal{T}_\lambda))}$. 
\end{theo}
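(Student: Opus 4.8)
The plan is to reduce $g_{\lambda\mu}$ to a count of standard skew shifted tableaux and then carry that count across the main diagonal via the doubling map $s$. First I would combine the identity \eqref{glambdamu} with Theorem \ref{fbystandard}, applied to the strict partitions $\lambda$, $\delta$ and $\mu+\delta$ (where $\delta=(l,l-1,\dots,1)$, $l=l(\mu)$): this shows that $g_{\lambda\mu}=f_{\lambda\delta}^{\mu+\delta}$ is the number of standard skew shifted tableaux $S$ of skew shifted shape $(\mu+\delta)/\delta$ with $sRect(S)=\mathcal{T}_\lambda$, where we may take $\mathcal{T}_\lambda$ to be the particular standard shifted tableau fixed in this section. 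So it suffices to produce a bijection $S\mapsto(S^t,S)$ between this set of tableaux and $\overline{\mathcal{T}(\mu^t,\mu,s(\mathcal{T}_\lambda))}$.

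The key point is that the skew shifted diagram $sY((\mu+\delta)/\delta)$ lies strictly to the right of the main diagonal: its $i$-th row occupies the columns $l+1,\dots,l+\mu_i$, so up to a rigid translation by $l$ columns it is the ordinary Young diagram $Y(\mu)$, and it contains no diagonal box. A standard skew shifted tableau $S$ of this shape is therefore the same thing as a standard Young tableau of shape $\mu$, and forming $s(S)$ does not trigger the diagonal subtleties of the doubling operation: in $s(S)$, the cells strictly above the diagonal form a translate of $S$ (rows $1,\dots,l$), the cells strictly below the diagonal form the reflected copy, which is exactly the transpose $S^t$ (a standard Young tableau of shape $\mu^t$) placed in rows $l+1,l+2,\dots$ and columns $1,\dots,l$, and the inner shape $\tilde\delta$ is a rectangle contained in the first $l$ rows. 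Consequently the $S$-part and the $S^t$-part of $s(S)$ occupy disjoint sets of rows and disjoint sets of columns, with $S^t$ below and to the left of $S$, so reading $s(S)$ from the bottom row upward gives $w(s(S))=w(S^t)\,w(S)=w(S^t*S)$. By Lemma \ref{wordrect} and Lemma \ref{prodasstar} it follows that $Rect(s(S))=Rect(S^t*S)=S^t.S$.

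Next I would apply Proposition \ref{coefffrect} to $S$, obtaining $s(sRect(S))=Rect(s(S))=S^t.S$. Since the doubling map $s$ is injective on shifted tableaux without primed entries (a shifted tableau can be read off from its image, and $\rho\mapsto\tilde\rho$ is injective on strict partitions), this gives the equivalence $sRect(S)=\mathcal{T}_\lambda\iff S^t.S=s(\mathcal{T}_\lambda)$. Hence $S\mapsto(S^t,S)$ is a well-defined injection from $\{\,S:sRect(S)=\mathcal{T}_\lambda\,\}$ into $\overline{\mathcal{T}(\mu^t,\mu,s(\mathcal{T}_\lambda))}$. For surjectivity, take $(T,U)\in\overline{\mathcal{T}(\mu^t,\mu,s(\mathcal{T}_\lambda))}$; then $T=U^t$ and $U^t.U=s(\mathcal{T}_\lambda)$, and comparing contents (the content of $s(\mathcal{T}_\lambda)$ consists of each of $1,\dots,|\lambda|$ exactly twice) forces $U$ to be standard of shape $\mu$, so $S:=U$ is a standard skew shifted tableau of shape $(\mu+\delta)/\delta$ with $S^t.S=s(\mathcal{T}_\lambda)$; by the equivalence above $sRect(S)=\mathcal{T}_\lambda$, and $(S^t,S)=(T,U)$. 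Counting the two sides of this bijection yields $g_{\lambda\mu}=\#\overline{\mathcal{T}(\mu^t,\mu,s(\mathcal{T}_\lambda))}$.

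The step I expect to be the main obstacle is the middle paragraph: carefully working out the shape $\widetilde{(\mu+\delta)/\delta}$ and checking that under $s$ the reflected copy of $S$ is precisely the transpose $S^t$, sitting below and to the left of $S$ with no shared row or column and separated from it by the rectangle $\tilde\delta$ — this disjointness is exactly what makes the reading word of $s(S)$ factor as $w(S^t)\,w(S)$ so that Lemma \ref{prodasstar} applies. The fact that $sY((\mu+\delta)/\delta)$ avoids the diagonal is what eliminates the half-box behaviour of the general doubling operation; once that is in place, assembling Theorem \ref{fbystandard}, Lemma \ref{wordrect}, Lemma \ref{prodasstar} and Proposition \ref{coefffrect} is routine.
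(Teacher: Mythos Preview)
Your proof is correct and follows essentially the same route as the paper's: reduce $g_{\lambda\mu}$ to a count of standard skew shifted tableaux $S$ of shape $(\mu+\delta)/\delta$ via \eqref{glambdamu} and Theorem~\ref{fbystandard}, identify each such $S$ with a standard Young tableau $U$ of shape $\mu$, observe that $w(s(S))=w(U^t*U)$ so that Lemmas~\ref{wordrect} and~\ref{prodasstar} give $Rect(s(S))=U^t.U$, and then invoke Proposition~\ref{coefffrect} to turn $sRect(S)=\mathcal{T}_\lambda$ into $U^t.U=s(\mathcal{T}_\lambda)$. Your write-up is in fact a bit more explicit than the paper's about why the resulting map is a bijection (the paper simply notes that $U$ is uniquely determined by $S$), but the argument is the same.
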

\begin{proof}
We recall the identity (\ref{glambdamu}) that $g_{\lambda\mu}=f_{\lambda\delta}^{\mu+\delta}$. By Theorem \ref{fbystandard}, it is the number of standard skew shifted tableaux $S$ of skew shifted shape $(\mu+\delta)/\delta$ such that $sRect(S)=\mathcal{T}_\lambda$. The condition $sRect(S)=\mathcal{T}_\lambda$, by Proposition \ref{coefffrect} is equivalent to the condition 
\begin{equation*}
s(\mathcal{T}_\lambda) = Rect(s(S)).
\end{equation*} The tableau $s(S)$ has form 
$$
{\hbox{\begin{tikzpicture}
\draw (0,0) node[below=0.5cm, right=2.2cm]{$U$};
\draw (0,0) node[below=2cm, right=0.5cm]{$U^t$};
\draw[fill=yellow](0,0)--++(0,-1.5)--++(2,0)--++(0,1.5)--cycle; 
\draw (0.5,0)--++(0,-0.5)--++(0.5,0)--++(0,-0.5)--++(0.5,0)--++(0,-1.5)--++(-1,0)--++(0,-0.5)--++(-0.5,0)--++(0,1.5);
\draw (2,0)--++(1.5,0)--++(0,-0.5)--++(-0.5,0)--++(0,-1)--++(-1,0);
\end{tikzpicture}}}
$$ 
where $U$ is a standard Young tableau of shape $\mu$. Since $s(S)$ and $U^t*U$ have the same word, then by Lemma \ref{wordrect} and Lemma \ref{prodasstar}, we have 
\begin{equation*}
s(\mathcal{T}_\lambda) = Rect(U^t*U)=U^t.U.
\end{equation*} It is clear that $U$ is uniquely determined by $S$. Hence, $g_{\lambda\mu}$ is the number of pairs $(T,U)$ in the set $\mathcal{T}(\mu^t,\mu,s(\mathcal{T}_\lambda))$ such that $T=U^t$. 
\end{proof}

Theorem \ref{gTtT} gives a way to compute the coefficients $g_{\lambda\mu}$. 
\begin{example}Let $\lambda=(5,2)$ and $\mu=(4,2,1)$. Since $\mu^t =(3,2,1,1)$, we can re-use the computation in Example \ref{ExampleLRrule}. The elements in the set $\mathcal{S}(\tilde{\lambda}/\mu^t,\mathcal{U}_\mu)$ with the corresponding elements in the set $\mathcal{T}(\mu^t,\mu,s(\mathcal{T}_\lambda))$ are (the elements in the subsets $\overline{\mathcal{T}(\mu^t,\mu,s(\mathcal{T}_\lambda))}$ are marked by coloring all boxes in green).
\begin{align*}
    \mathcal{S}(\tilde{\lambda}/\mu^t,\mathcal{U}_\mu) & \quad\quad \xrightarrow{\quad\left( \mathcal{F}^{\mu^t,\mu,s(\mathcal{T}_\lambda)}_{\tilde{\lambda}/\mu^t, \mathcal{U}_\mu} \right)^{-1}} &  \mathcal{T}(\mu^t,\mu,s(\mathcal{T}_\lambda))\quad\quad\quad\quad\quad\quad\quad\\
    \quad &\quad &\quad \\
    \begin{ytableau}
    *(yellow)&*(yellow)&*(yellow)&1&1&1\\
    *(yellow)&*(yellow)&2&2\\
    *(yellow)&1\\
    *(yellow)\\
    3
    \end{ytableau} & \quad\quad \xmapsto{\quad\quad\quad\quad\quad\quad\quad} &
    \begin{ytableau}
    *(green)1&*(green)2&*(green)6\\
    *(green)3&*(green)7\\
    *(green)4\\
    *(green)5
    \end{ytableau} \quad\quad
    \begin{ytableau}
    *(green)1&*(green)3&*(green)4&*(green)5\\
    *(green)2&*(green)7\\
    *(green)6
    \end{ytableau} \\
    \quad&\quad&\quad\\
    \begin{ytableau}
    *(yellow)&*(yellow)&*(yellow)&1&1&1\\
    *(yellow)&*(yellow)&2&2\\
    *(yellow)&3\\
    *(yellow)\\
    1
    \end{ytableau} & \quad\quad \xmapsto{\quad\quad\quad\quad\quad\quad\quad} &
    \begin{ytableau}
    1&2&2\\
    3&6\\
    4\\
    5
    \end{ytableau} \quad\quad
    \begin{ytableau}
   1&3&4&5\\
   6&7\\
   7
    \end{ytableau}\\
    \quad&\quad&\quad\\
    \begin{ytableau}
    *(yellow)&*(yellow)&*(yellow)&1&1&1\\
    *(yellow)&*(yellow)&1&2\\
    *(yellow)&3\\
    *(yellow)\\
    2
    \end{ytableau} & \quad\quad\xmapsto{\quad\quad\quad\quad\quad\quad\quad} &
    \begin{ytableau}
    *(green)1&*(green)2&*(green)7\\
    *(green)3&*(green)6\\
    *(green)4\\
    *(green)5
    \end{ytableau} \quad\quad
    \begin{ytableau}
   *(green)1&*(green)3&*(green)4&*(green)5\\
   *(green)2&*(green)6\\
   *(green)7
    \end{ytableau}\\
    \quad&\quad&\quad\\
    \begin{ytableau}
    *(yellow)&*(yellow)&*(yellow)&1&1&1\\
    *(yellow)&*(yellow)&1&2\\
    *(yellow)&2\\
    *(yellow)\\
    3
    \end{ytableau} &\quad\quad \xmapsto{\quad\quad\quad\quad\quad\quad\quad} &
    \begin{ytableau}
   1&2&7\\
   3&6\\
   4\\
   5
    \end{ytableau} \quad\quad
    \begin{ytableau}
  1&3&4&5\\
  2&7\\
  6
    \end{ytableau}
\end{align*}
Hence, $g_{\lambda\mu}=2$.
\end{example}

\subsection{Inequality \texorpdfstring{$g_{\lambda\mu} \leq c^{\tilde{\lambda}}_{\mu^t\mu}$}{TEXT}}
In this section, we prove a conjecture of Ressayre \cite{Ressayre19}, which originates from the main results of the paper \cite{BelkaleKumarRessayre}. \\

First, we need some notations. Let $G$ be a connected reductive complex algebraic group, $B$ a Borel subgroup, $H \subset B$ a maximal torus, $P \supset B$ a (standard) parabolic subgroup of $G$. Let $L$ be the Levi subgroup of $P$, $B_L$ the Borel subgroup of $L$, $L^{ss} = [L,L]$ the semisimple part of $L$. Let $X_L(H)^+$ be the set of $L$-dominant characters of $H$ via the highest weight. For each $\lambda \in X_L(H)^+$, let $V_L(\lambda)$ the irreducible representation of $L$ with highest weight $\lambda$, $\mathcal{L}_P(\lambda)$ the $P$-equivariant line bunble on $P/B_L$ associated to the principal $B_L$-bundle $P \rightarrow P/B_L$ via the one dimensional $B_L$-module $\lambda^{-1}$. Let $W$ be the Weyl group of $G$, $W_P$ the Weyl group of $P$, $W^P$ the set of minimal length coset representatives in $W/W_P$. For any $w \in W^P$, let $X_w$ be the corresponding Schubert variety, $[X_w] \in H^{2(\dim(G/P) - l(w))}(G/P, \mathbb{Z})$ the corresponding Poincaré dual class. Let $\astrosun_0$ be the deformed product in the singular cohomology $H^*(G/P, \mathbb{Z})$ (see \cite{BelkaleKumar}). Let $\rho$ and $\rho^L$ be the half sum of positive roots of $G$ and $L$ respectively. For any $w \in W^P$, set $\chi_w = \rho - 2 \rho^L + w^{-1}\rho$. \\

Below are the main results (Theorem 1.4 and Proposition 1.6) in the paper \cite{BelkaleKumarRessayre}.
\begin{theo}\label{Theorem 1.4 in BKR} Let $w_1, \dots , w_s \in W^P$ be such that 
\begin{equation*}
    [X_{w_1}] \astrosun_0 \dots \astrosun_0 [X_{w_s}] = [X_e] \in H^*(G/P).
\end{equation*}
Then for any positive integer $n$,
\begin{equation*}
    \dim([V_L(n \chi_{w_1})] \otimes \dots \otimes V_L(n\chi_{w_s})^{L^{ss}}) = 1.
\end{equation*}
\end{theo}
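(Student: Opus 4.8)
The statement is Theorem 1.4 of \cite{BelkaleKumarRessayre}, and the plan is to follow the geometric strategy built around the Belkale--Kumar deformed product $\astrosun_0$. First I would recall the structure of $\astrosun_0$: for $u,v\in W^P$ the product $[X_u]\astrosun_0[X_v]$ retains the term $c_{uv}^w[X_w]$ (with $c_{uv}^w$ the ordinary Schubert structure constant) precisely when a certain numerical equality holds --- equivalently, when the triple of generically translated Schubert varieties is \emph{Levi-movable} at a point of their transverse intersection --- and drops the term otherwise. Since $\astrosun_0$ is associative, the hypothesis $[X_{w_1}]\astrosun_0\cdots\astrosun_0[X_{w_s}]=[X_e]$ unwinds into two geometric facts: for generic $g_1,\dots,g_s\in G$ the intersection $\Sigma=\bigcap_{i=1}^s g_iX_{w_i}\subset G/P$ is a single reduced point $x$; and at $x$ this intersection is Levi-movable, i.e.\ identifying the isotropy parabolic at $x$ with $P$ and its Levi with $L$, the $L$-action on $T_x(G/P)$ carries the tuple of subspaces $\big(T_x(g_iX_{w_i})\big)_i$ into $L$-general position, so their tangent-space intersection remains $\{0\}$.

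The second step is to repackage these two facts as a statement about $L^{ss}$-invariants. After normalizing $x=eP$, the tangent space $\mathfrak g/\mathfrak p=T_{eP}(G/P)$ is an $L$-module, and each $T_{eP}(g_iX_{w_i})$ is, up to a conjugation absorbed into the generic $g_i$, a $B_L$-stable subspace, hence a point $p_i$ of a flag variety $\mathcal F_i$ of the $L^{ss}$-module $\mathfrak g/\mathfrak p$. The role of $\chi_{w_i}=\rho-2\rho^L+w_i^{-1}\rho$ is to record, as an $L$-equivariant determinant, the character by which $B_L$ acts on the line attached to $p_i$: this twist of $w_i^{-1}\rho-\rho$ by $2(\rho-\rho^L)$ is exactly what the tangent-space computation produces. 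Applying the Hilbert--Mumford numerical criterion to $Z=\mathcal F_1\times\cdots\times\mathcal F_s$ with the linearization determined by $(n\chi_{w_1},\dots,n\chi_{w_s})$, the Levi-movability from step one says that $(p_1,\dots,p_s)$ is $L^{ss}$-semistable, while the ``single reduced point'' rigidity says that the GIT quotient $Z/\!\!/ L^{ss}$ is a single reduced point with finite stabilizer acting trivially on the fibre. By Borel--Weil for $L^{ss}$ applied to each factor, the space of invariant sections of this linearization is $\big(V_L(n\chi_{w_1})\otimes\cdots\otimes V_L(n\chi_{w_s})\big)^{L^{ss}}$, so that space is one-dimensional; and the conclusion is insensitive to $n\ge 1$ because ``the GIT quotient is a point'' is a scale-invariant condition on the linearization.

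The hardest part is the middle step: establishing the precise dictionary between the combinatorial Belkale--Kumar Levi-movability (plus single-point) condition in $H^*(G/P)$ and the GIT (semi)stability and rigidity of the induced configuration in a product of $L^{ss}$-flag varieties. This is where one invokes Ressayre's notion of a well-covering (dominant) pair and the associated numerical criteria from \cite{Ressayre19} and \cite{BelkaleKumarRessayre}, and where a Bott-type vanishing is needed so that the relevant line bundles carry no higher cohomology and the section spaces literally compute the tensor-product invariants. I would organize the write-up by first fixing all normalizations in the model case $s=2$, $P=B$ (so that $\chi_w$ and the tangent-weight bookkeeping are unambiguous), then passing to a general standard parabolic $P$, and finally assembling the $s$-fold statement by induction on $s$ using associativity of $\astrosun_0$.
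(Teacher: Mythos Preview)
The present paper does not prove this theorem at all: it is quoted verbatim from \cite{BelkaleKumarRessayre} (introduced by ``Below are the main results (Theorem~1.4 and Proposition~1.6) in the paper \cite{BelkaleKumarRessayre}'') and used only as an input to derive the implications (\ref{11}) and (\ref{00}) in the case $G=Sp(2n,\mathbb{C})$. There is therefore no proof in this paper to compare your proposal against.

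For what it is worth, your outline does track the Belkale--Kumar--Ressayre strategy in broad strokes (Levi-movability of the intersection, passage to a GIT problem on a product of $L$-flag varieties, identification of the invariant sections with the tensor-product invariants via Borel--Weil). But since the paper under review treats the result as a black box, your write-up here is not a ``proof attempt'' for anything the paper proves; if you want it reviewed, it should be compared against \cite{BelkaleKumarRessayre} directly rather than against this paper.
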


\begin{prop}\label{Proposition 1.6 in BKR}
Let $w_1, \dots, w_s \in W^P$ be such that 
\begin{equation*}
    [X_{w_1}] \astrosun_0 \dots \astrosun_0 [X_{w_s}] = d[X_e] \in H^*(G/P), \text{ for some }d \ne 0.
\end{equation*}
Then $\dim( H^0( (L/B_L)^s, (\mathcal{L}_P(\chi_{w_1}) \boxtimes \dots \boxtimes \mathcal{L}_P(\chi_{w_s}))_{|(L/B_L)^s}     )^{L^{ss}}) \ne 0$. 
\end{prop}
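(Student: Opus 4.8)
The argument is the one in \cite{BelkaleKumarRessayre}; I would recover it by reading the conclusion through geometric invariant theory. Since $w_i \in W^P$, the character $\chi_{w_i} = \rho - 2\rho^L + w_i^{-1}\rho$ is $L$-dominant, so by Borel--Weil $H^0(L/B_L, \mathcal{L}_P(\chi_{w_i})_{|L/B_L})$ is (the dual of) the irreducible $L$-module $V_L(\chi_{w_i})$; hence, writing $\mathcal{L} = \mathcal{L}_P(\chi_{w_1}) \boxtimes \dots \boxtimes \mathcal{L}_P(\chi_{w_s})$, the space in the conclusion is---after dualizing, which does not change the multiplicity of the trivial $L^{ss}$-module---isomorphic to $(V_L(\chi_{w_1}) \otimes \dots \otimes V_L(\chi_{w_s}))^{L^{ss}}$. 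So the goal becomes to show that $(\chi_{w_1}, \dots, \chi_{w_s})$ lies in the $L^{ss}$-tensor cone, equivalently that the semistable locus of $(L/B_L)^s$ for the linearization $\mathcal{L}$ is non-empty, and finally that this already produces a nonzero invariant section at level one.

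First I would unwind the hypothesis $[X_{w_1}] \astrosun_0 \dots \astrosun_0 [X_{w_s}] = d[X_e]$ with $d \ne 0$ via the Belkale--Kumar theory of the deformed product: non-vanishing of the coefficient of $[X_e]$ means that the tuple $(w_1,\dots,w_s)$ is \emph{Levi-movable}, i.e.\ for generic translates $g_1 X_{w_1},\dots,g_s X_{w_s}$ there is a point $x$ in their transverse (hence finite) intersection together with $l_1,\dots,l_s \in L^{ss}$ such that the translated tangent spaces $dl_i\big(T_x(g_i X_{w_i})\big)$ still meet transversally in $T_x(G/P)$. Recording the relative positions of the flags at $x$ as a point $\underline{x} \in (L/B_L)^s$, I would then feed this configuration into the Hilbert--Mumford criterion: using Kempf's optimal destabilizing one-parameter subgroups of $L^{ss}$ and the dictionary relating Schubert positions to weights, Levi-movability forces the numerical weight of $\underline{x}$ along every one-parameter subgroup of $L^{ss}$ to be $\ge 0$, so $\underline{x}$ is semistable and the semistable locus is non-empty. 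A divisibility check---again using $w_i \in W^P$, which places each $\chi_{w_i}$ in the root lattice of $L^{ss}$---then upgrades non-emptiness of the semistable locus to a nonzero $L^{ss}$-invariant section of $\mathcal{L}$ itself. (When $d = 1$ the stronger Theorem \ref{Theorem 1.4 in BKR} applies with $n = 1$; for $d \ne 1$ one loses the rigidity that would force the GIT quotient to be a single reduced point, but non-emptiness still holds.)

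The step I expect to be the main obstacle is the second one: turning the cohomological Levi-movability---a transversality statement about tangent spaces inside $G/P$, produced by the deformed product---into the GIT numerical criterion on $(L/B_L)^s$, a statement about weights of one-parameter subgroups of the Levi. This dictionary between Schubert positions relative to a flag, the $L$-orbit structure on the flag variety, and Kempf's destabilizing subgroups is precisely the technical heart of \cite{BelkaleKumarRessayre}; carrying it out, together with the level-one refinement, is where essentially all the difficulty lies, whereas Borel--Weil, the duality-invariance of trivial-module multiplicity, and the passage from semistability to invariant sections are formal.
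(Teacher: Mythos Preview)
The paper does not prove this proposition at all: it is quoted verbatim as Proposition~1.6 of \cite{BelkaleKumarRessayre} (see the sentence immediately preceding Theorem~\ref{Theorem 1.4 in BKR}), and is used only as a black box to derive the implication~\eqref{00}. There is therefore no ``paper's own proof'' to compare your proposal against.

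That said, your sketch is a fair outline of the Belkale--Kumar--Ressayre argument and you correctly identify where the substance lies. The Borel--Weil identification and the reduction to non-emptiness of the semistable locus are accurate; the heart is indeed the passage from Levi-movability (the meaning of the $\astrosun_0$-product hypothesis) to semistability of a flag configuration in $(L/B_L)^s$ via the Hilbert--Mumford criterion, followed by the level-one refinement. One point to be careful about: your phrase ``a divisibility check \dots\ upgrades non-emptiness of the semistable locus to a nonzero $L^{ss}$-invariant section of $\mathcal{L}$ itself'' is doing real work. In general, semistability only guarantees an invariant section of some power $\mathcal{L}^{\otimes N}$; getting $N=1$ is not automatic and in \cite{BelkaleKumarRessayre} relies on specific features of the situation (in particular the structure of $\chi_w$), so this step deserves more than a parenthetical remark if you were actually writing out the proof.
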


In this section, we prove a conjecture of Ressayre \cite{Ressayre19}: $g_{\lambda\mu} \leq c_{\mu^t \mu}^{\tilde{\lambda}}$. The conjecture was based on the facts below
\begin{equation}\label{11}
g_{\lambda\mu} = 1 \text{ implies }c_{\mu^t \mu}^{\tilde{\lambda}} =1,
\end{equation}
\begin{equation}\label{00}
g_{\lambda\mu} \ne 0 \text{ implies } c_{\mu^t \mu}^{\tilde{\lambda}} \ne 0.
\end{equation}
The conclusions (\ref{11}), (\ref{00}) are versions of Theorem \ref{Theorem 1.4 in BKR} and Proposition \ref{Proposition 1.6 in BKR} for the case $G=Sp(2n,\mathbb{C})$. Indeed, by \cite{Ressayre}, in the case $G=Sp(2n,\mathbb{C})$, $G/P$ is the Lagrangian Grassmannian $LG(n,\mathbb{C}^{2n})$, the corresponding Levi group is $GL(n)$, and $W^P$ is parametrized by strict paritions. Suppose that $w\in W^P$ corresponds to strict partition $\lambda$, then $\chi_w$ corresponds to the partition $\tilde{\lambda}$. Let $\lambda^\vee$ be the strict partition corresponding to the completion of $sY(\lambda)$ in $sY((n,n-1,\dots,1))$. Let $\lambda_1, \lambda_2,\lambda_3$ be the strict partitions corresponding to $w_1,w_2,w_3 \in W^P$, respectively in Theorem \ref{Theorem 1.4 in BKR}. Since the structure constants for the singular cohomology and the deformed cohomology $\astrosun_0$ in this case are the same, the theorem says that for any positive integer $k$, we have
\begin{equation}\label{easyTheorem1.4}
    f_{\lambda_1 \lambda_2 }^{\lambda_3^\vee} = 1 \text{ implies  }c_{\widetilde{k\lambda_1} \widetilde{k\lambda_2}}^{\widetilde{k\lambda_3^\vee}}=1. 
\end{equation}
In particular, when $\lambda_1=\lambda$, $\lambda_2 =\delta$, $\lambda_3^\vee =\mu +\delta$, the left-hand side of (\ref{easyTheorem1.4}) becomes 
$$
    1=f^{\mu+\delta}_{\lambda\delta}=g_{\lambda\mu}.
$$
With $k=1$, by Theorem \ref{LR}, the right-hand side of (\ref{easyTheorem1.4}) becomes
\begin{align*}
   1= c_{\tilde{\lambda}\tilde{\delta}}^{\widetilde{\mu+\delta}} &=\# \mathcal{S}(\widetilde{\mu+\delta}/\tilde{\delta},s(\mathcal{T}_\lambda))\\
    &=\# \mathcal{S}(\mu^t*\mu,s(\mathcal{T}_\lambda))\\
    &=\# \mathcal{T}(\mu^t,\mu,s(\mathcal{T}_\lambda))\\
    &=c^{\tilde{\lambda}}_{\mu^t\mu}.
\end{align*}
Hence, we get the conclusion (\ref{11}). Similarly, the conclusion (\ref{00}) follows from Proposition \ref{Proposition 1.6 in BKR}.
\begin{theo}\label{glessthanc}
Let $\lambda$ be a strict partition and let $\mu$ be a partition. Then $g_{\lambda\mu} \leq c^{\tilde{\lambda}}_{\mu^t\mu}$.
\end{theo}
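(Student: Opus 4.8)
The plan is to deduce the inequality immediately from Theorem \ref{gTtT} and the Littlewood-Richardson rule (Theorem \ref{LR}), so that the only real work is a short bookkeeping check identifying the ambient set. First I would record that $s(\mathcal{T}_\lambda)$ is a genuine Young tableau of straight shape $\tilde{\lambda}$. Since $\mathcal{T}_\lambda$ has straight shifted shape $\lambda$ and contains no marked entries, it is its own shifted rectification, i.e. $sRect(\mathcal{T}_\lambda)=\mathcal{T}_\lambda$. Proposition \ref{coefffrect} then gives $s(\mathcal{T}_\lambda)=s(sRect(\mathcal{T}_\lambda))=Rect(s(\mathcal{T}_\lambda))$. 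A skew tableau equal to its own rectification necessarily has straight shape, and by the definition of the operation $\lambda\mapsto\tilde{\lambda}$ (the union of $sY(\lambda)$ with its reflection through the main diagonal) that shape is exactly $\tilde{\lambda}$; note also that $|\tilde{\lambda}|=2|\lambda|=2|\mu|=|\mu^t|+|\mu|$, which is the degree condition needed below.

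Next I would invoke Theorem \ref{LR} with the substitution of partitions $\lambda\to\mu^t$, $\mu\to\mu$, $\nu\to\tilde{\lambda}$, and with $V_0=s(\mathcal{T}_\lambda)$ the Young tableau of shape $\tilde{\lambda}$ just produced. This yields $\#\mathcal{T}(\mu^t,\mu,s(\mathcal{T}_\lambda))=c^{\tilde{\lambda}}_{\mu^t\mu}$ (this is precisely the chain of equalities already used in the derivation of \eqref{11}, read from the other end).

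Finally, Theorem \ref{gTtT} identifies $g_{\lambda\mu}$ with $\#\overline{\mathcal{T}(\mu^t,\mu,s(\mathcal{T}_\lambda))}$, and by its very definition $\overline{\mathcal{T}(\mu^t,\mu,s(\mathcal{T}_\lambda))}$ is a subset of $\mathcal{T}(\mu^t,\mu,s(\mathcal{T}_\lambda))$ — namely the pairs $(T,U)$ with the extra constraint $T=U^t$. Hence its cardinality cannot exceed that of the ambient set, and chaining the three facts gives $g_{\lambda\mu}=\#\overline{\mathcal{T}(\mu^t,\mu,s(\mathcal{T}_\lambda))}\le\#\mathcal{T}(\mu^t,\mu,s(\mathcal{T}_\lambda))=c^{\tilde{\lambda}}_{\mu^t\mu}$, as desired.

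There is essentially no obstacle here: all the substance of the statement has already been front-loaded into Theorem \ref{gTtT}, and what remains is the trivial observation that a subset is no larger than its ambient set, together with the identification of that ambient set with the one counted by the Littlewood-Richardson rule. The one point worth stating explicitly in the write-up is why $s(\mathcal{T}_\lambda)$ has shape $\tilde{\lambda}$ (done above via Proposition \ref{coefffrect}); everything else is formal.
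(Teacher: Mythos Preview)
Your proposal is correct and follows exactly the paper's own proof, which is the single line $g_{\lambda\mu}=\#\overline{\mathcal{T}(\mu^t,\mu,s(\mathcal{T}_\lambda))}\le\#\mathcal{T}(\mu^t,\mu,s(\mathcal{T}_\lambda))=c^{\tilde{\lambda}}_{\mu^t\mu}$ via Theorem \ref{gTtT} and Theorem \ref{LR}. You merely add the verification that $s(\mathcal{T}_\lambda)$ is a Young tableau of straight shape $\tilde{\lambda}$; this is in fact immediate from the definitions of $s(\cdot)$ and $\tilde{\lambda}$ (no appeal to Proposition \ref{coefffrect} is needed, though your argument via that proposition is also fine).
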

\begin{proof}
By Theorem \ref{gTtT} and Theorem \ref{LR}, we have \begin{equation*}
g_{\lambda\mu} = \# \overline{\mathcal{T}(\mu^t,\mu,s(\mathcal{T}_\lambda))} \leq \# \mathcal{T}(\mu^t,\mu,s(\mathcal{T}_\lambda)) =  c^{\tilde{\lambda}}_{\mu^t\mu}.
\end{equation*}
\end{proof}
\subsection{Inequality \texorpdfstring{$g_{\lambda\mu}^2 \leq c^{\tilde{\lambda}}_{\mu^t \mu}$}{TEXT}}\label{g2<c}
In this section, we propose a stronger conjectural inequality than Theorem \ref{glessthanc}. We provide some examples to support this conjecture. Indeed, we formulate a conjecture on combinatorial models whose validity implies the first conjecture.   
\begin{conj}\label{conjectureinequal2}Let $\lambda$ be a strict partition and let $\mu$ be a partition. Then $g_{\lambda\mu}^2 \leq c^{\tilde{\lambda}}_{\mu^t \mu}$.
\end{conj}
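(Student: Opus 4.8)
We sketch a strategy toward Conjecture~\ref{conjectureinequal2}. The plan is to promote the inequality of Theorem~\ref{glessthanc} to a square by constructing an explicit injection
\[
\overline{\mathcal{T}(\mu^t,\mu,s(\mathcal{T}_\lambda))}\times\overline{\mathcal{T}(\mu^t,\mu,s(\mathcal{T}_\lambda))}\;\hookrightarrow\;\mathcal{T}(\mu^t,\mu,s(\mathcal{T}_\lambda)).
\]
By Theorem~\ref{gTtT} the left-hand side has cardinality $g_{\lambda\mu}^2$, and by the last equality in Theorem~\ref{LR} the right-hand side has cardinality $c^{\tilde{\lambda}}_{\mu^t\mu}$, so such an injection yields exactly $g_{\lambda\mu}^2\le c^{\tilde{\lambda}}_{\mu^t\mu}$. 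Writing $R=s(\mathcal{T}_\lambda)$ and unwinding the proof of Theorem~\ref{gTtT}, an element of $\overline{\mathcal{T}(\mu^t,\mu,R)}$ is the same datum as a standard Young tableau $U$ of shape $\mu$ with $U^t.U=R$. So concretely I want to send a pair $(U,V)$ of such tableaux to a single pair $(A,B)$, with $A$ of shape $\mu^t$, $B$ of shape $\mu$ and $A.B=R$, in a way that can be recovered from $(A,B)$ alone.

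First I would pass to the skew side. As in the derivation of~(\ref{11}) one has $\widetilde{\mu+\delta}/\widetilde{\delta}=\mu^t*\mu$ as skew shapes, so by Lemma~\ref{prodasstar} and the bijection $\mathcal{F}$ of Section~\ref{LRsubsection}, $\mathcal{T}(\mu^t,\mu,R)$ is in canonical bijection with $\mathcal{S}(\mu^t*\mu,R)$, under which $\overline{\mathcal{T}(\mu^t,\mu,R)}$ corresponds precisely to the ``symmetric'' skew tableaux of the form $U^t*U$ rectifying to $R$. The task then becomes: given two symmetric skew tableaux $U^t*U$ and $V^t*V$ rectifying to $R$, produce a third element of $\mathcal{S}(\mu^t*\mu,R)$ injectively. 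The natural tool is the tableau switching of Sections~2.6--2.7: since the inner tableau $U^t$ of $U^t*U$ has rectification $U^t$ and likewise $V^t$ is its own rectification, one can try to switch $V^t$, viewed as an inner perforated tableau, past the pair $U^t*U$ (or dually), and read off a new skew tableau $\Phi(U,V)$ whose northeast block still records $U$ --- so the procedure ``remembers'' $U$ --- while its southwest block records $U^t$ transported through $V^t$. Since $\Phi(U,V)$ is genuinely non-symmetric in general, this asymmetry is what should make $\Phi$ injective.

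The steps, in order, would then be: (i) make $\Phi(U,V)$ precise and verify that it lies in $\mathcal{S}(\mu^t*\mu,R)$, i.e.\ that $Rect(\Phi(U,V))=R$, which should follow by combining the rectification-preservation statements in Theorem~\ref{switchingproperties}(3)--(4) with Lemma~\ref{wordrect}; (ii) prove $\Phi$ injective by reconstructing $(U,V)$ from $\Phi(U,V)$, using that the switching map is an involution (Theorem~\ref{switchingproperties}(5)); (iii) check the consistency constraints $\Phi(U,U)=U^t*U$ (so the diagonal maps onto the symmetric locus) and that $\Phi$ is equivariant under transposing the whole configuration (replacing $\mu$ by $\mu^t$ and $R$ by its associated tableau, which is again $R$), the latter recovering the equality $g_{\lambda\mu}=g_{\lambda\mu^t}$ of Proposition~\ref{gmumut}. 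This map $\Phi$ is, in essence, the content of Conjecture~\ref{bij}.

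I expect step~(ii), injectivity, to be the main obstacle. The switching map is an involution on \emph{perforated pairs}, but the construction above juggles three interleaved fillings ($U^t$, $U$ and $V^t$), and switching controls only the \emph{rectifications} of the pieces, not their original standard fillings; showing that the symmetry of $U^t*U$ supplies enough rigidity to undo the procedure --- and in particular that $\Phi(U,V)=\Phi(U',V')$ forces $(U,V)=(U',V')$ --- is exactly the new combinatorial input that is needed, and is why the inequality is stated here only as a conjecture, contingent on Conjecture~\ref{bij}. A secondary difficulty appears already in step~(i): the ambient skew shape $\mu^t*\mu$ is not presented as $\nu/\alpha$ with the relevant pieces nested in the way Theorem~\ref{switchingproperties} literally demands, so one must first reorganize the configuration --- for instance by partially rectifying $U^t*U$, or by sliding $V^t$ in from an inner corner --- before the switching theorems apply cleanly.
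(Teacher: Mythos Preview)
Your proposal is honest about being a sketch, and it correctly targets the same structural goal as the paper: an injection from $\overline{\mathcal{T}(\mu^t,\mu,s(\mathcal{T}_\lambda))}^2$ into $\mathcal{T}(\mu^t,\mu,s(\mathcal{T}_\lambda))$. But you should be aware that the paper does \emph{not} prove Conjecture~\ref{conjectureinequal2}; it only derives it conditionally (Proposition~\ref{g2lessthanc}) from Conjecture~\ref{bij}, which remains open. So the right comparison is between your sketch and the paper's conditional argument.

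The paper's candidate injection is quite different from yours, and more concrete. Given two symmetric pairs $(U_\alpha^t,U_\alpha)$ and $(U_\beta^t,U_\beta)$, the paper first tries the naive ``mixed'' pair $(U_\alpha^t,U_\beta)$: if it already lies in $\mathcal{T}(\mu^t,\mu,s(\mathcal{T}_\lambda))$, keep it; otherwise Conjecture~\ref{bij}(2) guarantees that the transposed mixed pair $(V_\alpha,V_\beta^t)$ lies in $\mathcal{T}(\mu,\mu^t,s(\mathcal{T}_\lambda))$, and one pulls it back through the \emph{already well-defined} switching bijection $\mathcal{S}^{\mu^t,\mu,\tilde{\lambda}}_{s(\mathcal{T}_\lambda),\mathcal{U}_\mu,\mathcal{U}_{\mu^t},s(\mathcal{T}_\lambda)}$. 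Thus the paper never has to build a new switching map; it uses the standard one from Section~\ref{symmetryLR} and confines all the conjectural content to two clean combinatorial statements about how that known bijection interacts with the symmetric locus. Your map $\Phi$, by contrast, is not yet a definition: you want to ``switch $V^t$ past $U^t*U$'', but $V^t$ does not sit inside, under, or adjacent to $U^t*U$ in any perforated-pair sense, so Theorem~\ref{switchingproperties} does not apply as stated --- a difficulty you flag yourself in step~(i). Until that nesting problem is resolved, there is no map whose injectivity one can even attempt to check, so your step~(ii) has no object.

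In short: the genuine gap in your proposal is the definition of $\Phi$. The paper sidesteps exactly this by a two-case construction that only uses the existing bijection $\mathcal{S}$, and isolates what must be proved as Conjecture~\ref{bij}. Your intuition that ``$\Phi$ is, in essence, the content of Conjecture~\ref{bij}'' is morally right but not literally: Conjecture~\ref{bij} is a statement about the restriction of a \emph{known} map, not about the existence of a new one. If you want to pursue your route, the first concrete task is to make $\Phi$ well-defined --- e.g.\ by specifying exactly which rectification/switching sequence on $U^t*U$ and $V^t*V$ you perform and in which ambient shape --- and then to compare it with the paper's case-split map; it may well be that once $\Phi$ is pinned down, proving $\Phi$ injective is equivalent to Conjecture~\ref{bij}.
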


To compute the decomposition of $P_\lambda$ in terms of $s_\mu$, we use SageMath (online version: \url{https://cocalc.com/}). For example, 
\begin{python}
Sym = SymmetricFunctions(FractionField(QQ['t']))
SP = Sym.hall_littlewood(t=-1).P();
s = Sym.schur();
s(SP([4,2]))
\end{python}
The result appears in computer is 
\begin{python}
s[2, 2, 1, 1] + s[2, 2, 2] + s[3, 1, 1, 1] + 2*s[3, 2, 1] + s[3, 3] + s[4, 1, 1] + s[4, 2].
\end{python}
It means $P_{(4,2)} = s_{(2,2,1,1)} + s_{(2,2,2)} + s_{(3,1,1)} + s_{(3,3)} + s_{(4,1,1)} + s_{(4,2)}.$\\

To compute the Littlewood-Richardson coefficients by computer, we use the code below. For example, 
\begin{python}
import sage.libs.lrcalc.lrcalc as lrcalc
A = [5,4,2,1]
B = [3,2,1]
C = [3,2,1]
lrcalc.lrcoef(A,B,C)
\end{python}
The result appears in computer is 
\begin{python}
4.
\end{python}
It means
$c_{\mu^t\mu}^{\tilde{\lambda}} = 4$, where $\lambda=(4,2)$, $\mu=\mu^t=(3,2,1)$.\\

We check the conjecture for all strict partitions $\lambda$ such that $|\lambda|\leq 11$. By Theorem \ref{glessthanc}, we just need to check the cases $g_{\lambda\mu} >1$. Here is the data of computations on computer.  \\
\newpage

\begin{longtable}{|c|c|c|c|c|}
  \hline
  $|\lambda|$ & $\lambda$ such that $g_{\lambda\mu}>1$ & $\mu$ such that $g_{\lambda\mu} >1$ &  $g_{\lambda\mu}$ & $c^{\tilde{\lambda}}_{\mu^t\mu}$\\
  \hline
11& (9,2)&(3, 2, 1, 1, 1, 1, 1, 1)&2&4 \\
11& (9,2)&(4, 2, 1, 1, 1, 1, 1)&2&4\\
11& (9,2)&(5, 2, 1, 1, 1, 1)&2&4\\
11& (9,2)&(6, 2, 1, 1, 1)&2&4\\
11& (9,2)&(7, 2, 1, 1)&2&4\\
11& (9,2)&(8, 2, 1)&2&4\\
  \hline
11& (8,3)&(3, 2, 2, 1, 1, 1, 1)&2&4 \\
11& (8,3)&(4, 2, 1, 1, 1, 1, 1)&2&4\\
11& (8,3)&(4, 2, 2, 1, 1, 1)&2&4\\
11& (8,3)&(4, 3, 1, 1, 1, 1)&2&4\\
11& (8,3)&(5, 2, 1, 1, 1, 1)&2&4\\
11& (8,3)&(5, 2, 2, 1, 1)&2&4\\
11& (8,3)&(5, 3, 1, 1, 1)&2&4\\
11& (8,3)&(6, 2, 1, 1, 1)&2&4\\
11& (8,3)&(6, 2, 2, 1)&2&4\\
11& (8,3)&(6, 3, 1, 1)&2&4\\
11& (8,3)&(7, 2, 1, 1)&2&4\\
11& (8,3)&(7, 3, 1)&2&4\\
\hline
11&(7,4)&(3, 2, 2, 2, 1, 1)&2&4\\
11&(7,4)&(4, 2, 2, 1, 1, 1)&2&4\\
11&(7,4)&(4, 2, 2, 2, 1)&2&4\\
11&(7,4)&(4, 3, 2, 1, 1)&2&4\\
11&(7,4)&(5, 2, 1, 1, 1, 1)&2&4\\
11&(7,4)&(5, 2, 2, 1, 1)&2&4\\
11&(7,4)&(5, 3, 1, 1, 1)&2&4\\
11&(7,4)&(5, 3, 2, 1)&2&4\\
11&(7,4)&(5, 4, 1, 1)&2&4\\
11&(7,4)&(6, 2, 1, 1, 1)&2&4\\
11&(7,4)&(6, 3, 1, 1)&2&4\\
11&(7,4)&(6, 4, 1)&2&4\\
\hline
11&(7,3,1)&(3, 3, 2, 1, 1, 1)&2&6\\
11&(7,3,1)&(4, 2, 2, 1, 1, 1)&2&5\\
11&(7,3,1)&(4, 3, 1, 1, 1, 1)&2&5\\
11&(7,3,1)&(4, 3, 2, 1, 1)&3&13\\
11&(7,3,1)&(5, 2, 2, 1, 1)&2&5\\
11&(7,3,1)&(5, 3, 1, 1, 1)&2&5\\
11&(7,3,1)&(5, 3, 2, 1)&3&13\\
11&(7,3,1)&(6, 2, 2, 1)&2&5\\
11&(7,3,1)&(6, 3, 1, 1)&2&5\\
11&(7,3,1)&(6, 3, 2)&2&6\\
\hline
11&(6,4,1)&(3, 3, 2, 2, 1)&2&6\\
11&(6,4,1)&(4, 2, 2, 2, 1)&2&5\\
11&(6,4,1)&(4, 3, 2, 1, 1)&3&14\\
11&(6,4,1)&(4, 3, 2, 2)&2&7\\
11&(6,4,1)&(4, 3, 3, 1)&2&4\\
11&(6,4,1)&(4, 4, 2, 1)&2&7\\
11&(6,4,1)&(5, 2, 2, 1, 1)&2&5\\
11&(6,4,1)&(5, 3, 1, 1, 1)&2&5\\
11&(6,4,1)&(5, 3, 2, 1)&3&14\\
11&(6,4,1)&(5, 4, 1, 1)&2&5\\
11&(6,4,1)&(5, 4, 2)&2&6\\
\hline
11& (6,3,2)&(4, 3, 2, 1, 1) &3&10\\
11& (6,3,2)&(4, 3, 2, 2)&2&4\\
11& (6,3,2)&(4, 3, 3, 1)&2&5\\
11& (6,3,2)&(4, 4, 2, 1)&2&4\\
11& (6,3,2)&(5, 3, 2, 1)&3&10\\
\hline
11&(5,4,2)&(4, 3, 2, 1, 1)&2&4\\
11&(5,4,2)&(4, 3, 2, 2)&2&5\\
11&(5,4,2)&(4, 3, 3, 1)&2&5\\
11&(5,4,2)&(4, 4, 2, 1)&2&5\\
11&(5,4,2)&(5, 3, 2, 1)&2&4\\
\hline
10&(8,2)&(3, 2, 1, 1, 1, 1, 1)&2&4 \\
10&(8,2)&(4, 2, 1, 1, 1, 1)&2&4\\
10&(8,2)&(5, 2, 1, 1, 1)&2&4\\
10&(8,2)&(6, 2, 1, 1)&2&4\\
10&(8,2)&(7, 2, 1)&2&4\\
\hline
10&(7,3)& (3, 2, 2, 1, 1, 1)&2&4\\
10&(7,3)&(4, 2, 1, 1, 1, 1)&2&4\\
10&(7,3)&(4, 2, 2, 1, 1)&2&4\\
10&(7,3)&(4, 3, 1, 1, 1)&2&4\\
10&(7,3)&(5, 2, 1, 1, 1)&2&4\\
10&(7,3)&(5, 2, 2, 1)&2&4\\
10&(7,3)&(5, 3, 1, 1)&2&4\\
10&(7,3)&(6, 2, 1, 1)&2&4\\
10&(7,3)&(6, 3, 1) &2&4\\
\hline
10&(6,4)&(3, 2, 2, 2, 1)&2&4 \\
10&(6,4)&(4, 2, 2, 1, 1)&2&4 \\
10&(6,4)&(4, 3, 2, 1)&2&4 \\
10&(6,4)&(5, 2, 1, 1, 1)&2&4 \\
10&(6,4)&(5, 3, 1, 1)&2&4 \\
10&(6,4)&(5, 4, 1)&2&4 \\
\hline
10&(6,3,1)&  (3, 3, 2, 1, 1)&2&6\\
10&(6,3,1)&(4, 2, 2, 1, 1)&2&5 \\
10&(6,3,1)&(4, 3, 1, 1, 1)&2&5\\
10&(6,3,1)&(4, 3, 2, 1)&3&13\\
10&(6,3,1)&(5, 2, 2, 1)&2&5\\
10&(6,3,1)&(5, 3, 1, 1)&2&5\\
10&(6,3,1)&(5, 3, 2)&2&6\\
\hline
10&(5,4,1)&(4, 3, 2, 1)&2&7\\
\hline
10&(5,3,2)&(4, 3, 2, 1)&3&9\\ 
\hline
9&(7,2)&(3, 2, 1, 1, 1, 1)&2&4\\
9&(7,2)&(4, 2, 1, 1, 1)&2&4 \\
9&(7,2)&(5, 2, 1, 1)&2&4\\
9&(7,2)&(6, 2, 1)&2&4\\ 
\hline
9&(6,3)&(3, 2, 2, 1, 1)&2&4\\
9&(6,3)&(4, 2, 1, 1, 1)&2&4\\
9&(6,3)&(4, 2, 2, 1)&2&4\\
9&(6,3)&(4, 3, 1, 1)&2&4\\
9&(6,3)&(5, 2, 1, 1)&2&4\\
9&(6,3)&(5, 3, 1)&2&4\\
\hline
9&(5,3,1)&(3, 3, 2, 1)&2&6\\
9&(5,3,1)&(4, 2, 2, 1)&2&5\\
9&(5,3,1)&(4, 3, 1, 1)&2&5\\
9&(5,3,1)&(4, 3, 2)&2&6 \\
\hline
8&(6,2)& (3, 2, 1, 1, 1)&2&4 \\
8&(6,2)&(4, 2, 1, 1)&2&4 \\
8&(6,2)&(5, 2, 1)&2&4\\
\hline
8&(5,3)& (3, 2, 2, 1)&2&4\\
8&(5,3)&(4, 2, 1, 1)&2&4\\
8&(5,3)&(4, 3, 1)&2&4\\
\hline
7&(5,2)& (3, 2, 1, 1)&2&4\\
7&(5,2)&(4, 2, 1)&2&4\\
\hline
6&(4,2)&(3,2,1)&2&4\\
\hline
$< 6$& $\emptyset$ & $\emptyset$ & &\\
\hline
\end{longtable}
\begin{conj}\label{bij} We have
\begin{itemize}
\item[1.]
The restriction of the map $\mathcal{S}^{\mu^t,\mu,\tilde{\lambda}}_{s(\mathcal{T}_\lambda), \mathcal{U}_\mu, \mathcal{U}_{\mu^t},s(\mathcal{T}_\lambda)}$ on the set $\overline{\mathcal{T}(\mu^t,\mu,s(\mathcal{T}_\lambda))}$ is a bijection onto the set $\overline{\mathcal{T}(\mu,\mu^t,s(\mathcal{T}_\lambda))}$.
\item[2.] The elements of the set $\overline{\mathcal{T}(\mu^t,\mu,s(\mathcal{T}_\lambda))}$ have the form $(U^t_\alpha,U_\alpha)$, with index $\alpha$. Let $(V_\alpha,V_\alpha^t)$ be the image of $(U_\alpha^t,U_\alpha)$ through the bijection $\mathcal{S}^{\mu^t,\mu,\tilde{\lambda}}_{s(\mathcal{T}_\lambda), \mathcal{U}_\mu, \mathcal{U}_{\mu^t},s(\mathcal{T}_\lambda)}$. Let $(U_\alpha^t,U_\alpha)$ and $(U_\beta^t,U_\beta)$ be elements of the set $\overline{\mathcal{T}(\mu^t,\mu,s(\mathcal{T}_\lambda))}$. If $(U_\alpha^t,U_\beta)$ is not in the set $\mathcal{T}(\mu^t,\mu,s(\mathcal{T}_\lambda))$, then $(V_\alpha,V_\beta^t)$ is in the set $\mathcal{T}(\mu,\mu^t,s(\mathcal{T}_\lambda))$.
\end{itemize}
\end{conj}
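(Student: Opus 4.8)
The plan is to study both parts directly through the map $\Phi:=\mathcal{S}^{\mu^t,\mu,\tilde\lambda}_{s(\mathcal{T}_\lambda),\mathcal{U}_\mu,\mathcal{U}_{\mu^t},s(\mathcal{T}_\lambda)}$, i.e.\ the composite $\mathcal{T}(\mu^t,\mu,s(\mathcal{T}_\lambda))\xrightarrow{\mathcal{F}}\mathcal{S}(\tilde\lambda/\mu^t,\mathcal{U}_\mu)\xrightarrow{\mathcal{B}}\mathcal{S}(\tilde\lambda/\mu,\mathcal{U}_{\mu^t})\xrightarrow{\mathcal{F}^{-1}}\mathcal{T}(\mu,\mu^t,s(\mathcal{T}_\lambda))$, and to control how it interacts with the ``transposed pair'' condition $T=U^t$. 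Since $\Phi$ is already a bijection between the two sets $\mathcal{T}$, Part 1 reduces to proving $\Phi(\overline{\mathcal{T}(\mu^t,\mu,s(\mathcal{T}_\lambda))})\subseteq\overline{\mathcal{T}(\mu,\mu^t,s(\mathcal{T}_\lambda))}$ together with the mirror inclusion for $\Phi^{-1}$; by the involutivity of tableau switching (Theorem \ref{switchingproperties}) the inverse commutor is the same construction with $\mu$ and $\mu^t$ exchanged, so it suffices to prove the single inclusion above.

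For Part 1 I would transport the whole picture back to the shifted world. By Theorem \ref{gTtT} and its proof, elements of $\overline{\mathcal{T}(\mu^t,\mu,s(\mathcal{T}_\lambda))}$ correspond to standard skew shifted tableaux $S$ of shifted shape $(\mu+\delta)/\delta$ with $sRect(S)=\mathcal{T}_\lambda$, via $S\mapsto(U^t,U)$ where $s(S)$ has the word of $U^t*U$; Proposition \ref{coefffrect} is precisely what lets one pass between $Rect$ (acting on $s(S)$, hence inside $\Phi$) and $sRect$ (acting on $S$). Under this dictionary $\Phi$ should become the shifted analogue of tableau switching / shifted jeu de taquin relating skew shifted tableaux of shapes $(\mu+\delta)/\delta$ and $(\mu^t+\delta')/\delta'$, with $\delta'=(l(\mu^t),\dots,1)$, that share the rectification $\mathcal{T}_\lambda$, and the point to prove is that this operation coincides with the combinatorial bijection realizing $g_{\lambda\mu}=g_{\lambda\mu^t}$ — for instance the transpose-with-marking-adjustment on the Stembridge-type tableaux of Theorem \ref{Stembridgeg}. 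Checking that the switched object again has $U^t*U$-shape, i.e.\ that the image pair is of the form $(V,V^t)$ with $V$ standard of shape $\mu$, is exactly the assertion of Part 1; the delicate bookkeeping is the one-box shift built into $s(\cdot)$ and the fact that $\delta$ and $\delta'$ are genuinely different staircases.

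For Part 2 the aim is to show that $\Phi$ behaves ``componentwise'' on the mixed pairs $(U_\alpha^t,U_\beta)$ assembled from the diagonal elements $(U_\gamma^t,U_\gamma)$ of $\overline{\mathcal{T}(\mu^t,\mu,s(\mathcal{T}_\lambda))}$: the $\alpha$-data enters the chain $\mathcal{F}\rightsquigarrow\mathcal{B}\rightsquigarrow\mathcal{F}^{-1}$ only through the filling $U_\alpha^t$ that gets row-inserted in $\mathcal{F}$ (equivalently through the outer skew shape $\tilde\lambda/\mu^t$ and its standard fillings after switching), while the $\beta$-data enters only through the superstandard recording tableau $\mathcal{U}_\mu$ and the internal standard tableau $U_\beta$; one wants tableau switching to keep these two channels unentangled, so that $(V_\alpha,V_\beta^t)$ lies in $\mathcal{T}(\mu,\mu^t,s(\mathcal{T}_\lambda))$ exactly when the two channels are compatible on the $(\mu,\mu^t)$-side — and Part 1 then forces that this compatibility can fail only when it already held on the $(\mu^t,\mu)$-side, i.e.\ when $(U_\alpha^t,U_\beta)\in\mathcal{T}(\mu^t,\mu,s(\mathcal{T}_\lambda))$. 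I would carry this out by chasing one box at a time through $\mathcal{F}$, $\mathcal{B}$, $\mathcal{F}^{-1}$, controlling rectifications via $Rect(S_T)=Rect(S)$, $Rect({}^{S}T)=Rect(T)$ from Theorem \ref{switchingproperties} and via Lemma \ref{prodasstar}.

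The hard part is Part 2: it is the genuinely new cross term and the reason Conjecture \ref{bij} is only a conjecture, whereas Part 1 — and the passage from Conjecture \ref{bij} to Conjecture \ref{conjectureinequal2} — should be comparatively routine. For the latter one sets $A=\{(\alpha,\beta):(U_\alpha^t,U_\beta)\in\mathcal{T}(\mu^t,\mu,s(\mathcal{T}_\lambda))\}$ and $B=\{(\alpha,\beta):(V_\alpha,V_\beta^t)\in\mathcal{T}(\mu,\mu^t,s(\mathcal{T}_\lambda))\}$; Part 2 gives $A\cup B=\{1,\dots,g_{\lambda\mu}\}^2$, the maps $(\alpha,\beta)\mapsto(U_\alpha^t,U_\beta)$ on $A$ and $(\alpha,\beta)\mapsto(V_\alpha,V_\beta^t)$ on $B$ are injective (the $U_\gamma$, hence the $V_\gamma$, being pairwise distinct), and the componentwise description identifies $\Phi$ applied to the first family with a subfamily of the second meeting it precisely along $A\cap B$, whence $g_{\lambda\mu}^2=\#(A\cup B)\le\#\mathcal{T}(\mu,\mu^t,s(\mathcal{T}_\lambda))=c^{\tilde\lambda}_{\mu^t\mu}$. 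The obstacle I most expect to fight is exactly the claim that tableau switching does not mix the $\alpha$- and $\beta$-channels; it is conceivable that a correction term is needed there, which would also explain why Part 2 is phrased as a one-sided implication rather than the cleaner equality $\Phi(U_\alpha^t,U_\beta)=(V_\alpha,V_\beta^t)$.
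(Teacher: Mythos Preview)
The statement you were asked to prove is labelled \textbf{Conjecture}~\ref{bij} in the paper, and the paper does \emph{not} prove it. There is no proof in the paper to compare your proposal against; the conjecture is presented as open, supported only by computer evidence and by the observation (Remark~\ref{coro_of_conj}) that Part~1 would reprove the identity $g_{\lambda\mu}=g_{\lambda\mu^t}$ of Proposition~\ref{gmumut}. Your proposal is therefore not so much a proof as a research plan, and you seem to recognise this yourself when you write that Part~2 ``is the reason Conjecture~\ref{bij} is only a conjecture''.

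A few specific comments. Your assessment that Part~1 ``should be comparatively routine'' is more optimistic than the paper warrants: the author does not prove Part~1 either, and the dictionary you sketch --- matching $\Phi$ with a shifted tableau-switching that realises $g_{\lambda\mu}=g_{\lambda\mu^t}$ combinatorially --- is precisely the missing ingredient. The algebraic and geometric proofs of $g_{\lambda\mu}=g_{\lambda\mu^t}$ in Proposition~\ref{gmumut} do not produce a bijection at the level of $\overline{\mathcal{T}}$-sets, so Part~1 is genuinely open, not merely unwritten. For Part~2, your heuristic that tableau switching should keep the ``$\alpha$-channel'' and ``$\beta$-channel'' unentangled is exactly the kind of statement that is \emph{not} true in general for RSK-type bijections, and you correctly flag this as the main obstruction.

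Your final paragraph, deriving $g_{\lambda\mu}^2\le c^{\tilde\lambda}_{\mu^t\mu}$ from Conjecture~\ref{bij}, is essentially the content of Proposition~\ref{g2lessthanc} in the paper. The paper's argument is slightly different in bookkeeping: rather than your sets $A$ and $B$, it builds for each pair $(\alpha,\beta)$ an element $\widetilde{(U_\alpha^t,U_\beta)}\in\mathcal{T}(\mu^t,\mu,s(\mathcal{T}_\lambda))$ by taking $(U_\alpha^t,U_\beta)$ itself when this lies in $\mathcal{T}$, and otherwise taking $\Phi^{-1}(V_\alpha,V_\beta^t)$; the claim is then that these $g_{\lambda\mu}^2$ elements are distinct. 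Both arguments have the same gap when written at this level of detail --- neither you nor the paper spells out why the resulting map from $\{1,\dots,g_{\lambda\mu}\}^2$ into $\mathcal{T}(\mu^t,\mu,s(\mathcal{T}_\lambda))$ is injective --- but this is a minor issue compared to the status of the conjecture itself.
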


\begin{rema}\label{coro_of_conj} Thanks to Theorem \ref{gTtT}, the validity of Conjecture \ref{bij} 1. implies the equality $g_{\lambda\mu}=g_{\lambda\mu^t}$, which was proved in Proposition \ref{gmumut}.  
\end{rema}

\begin{prop}\label{g2lessthanc} Suppose that Conjecture \ref{bij} holds. Then we have $g_{\lambda\mu}^2 \leq c^{\tilde{\lambda}}_{\mu^t\mu}$.
\end{prop}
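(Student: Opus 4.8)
The plan is to use Conjecture \ref{bij} to manufacture, from the $g_{\lambda\mu}$ elements of $\overline{\mathcal{T}(\mu^t,\mu,s(\mathcal{T}_\lambda))}$, a family of $g_{\lambda\mu}^2$ \emph{distinct} elements of $\mathcal{T}(\mu^t,\mu,s(\mathcal{T}_\lambda))$; since $\#\mathcal{T}(\mu^t,\mu,s(\mathcal{T}_\lambda)) = c^{\tilde{\lambda}}_{\mu^t\mu}$ by Theorem \ref{LR}, this yields the inequality. Write $\overline{\mathcal{T}(\mu^t,\mu,s(\mathcal{T}_\lambda))} = \{(U_\alpha^t, U_\alpha) : \alpha \in I\}$ with $\#I = g_{\lambda\mu}$, and for each $\alpha$ let $(V_\alpha, V_\alpha^t)$ be the image of $(U_\alpha^t, U_\alpha)$ under the bijection $\mathcal{S}^{\mu^t,\mu,\tilde{\lambda}}_{s(\mathcal{T}_\lambda), \mathcal{U}_\mu, \mathcal{U}_{\mu^t}, s(\mathcal{T}_\lambda)}$, which by Conjecture \ref{bij} part 1 lands in $\overline{\mathcal{T}(\mu,\mu^t,s(\mathcal{T}_\lambda))}$ and is a bijection there.

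First I would consider the $g_{\lambda\mu}^2$ pairs $(U_\alpha^t, U_\beta)$ of Young tableaux of shapes $\mu^t$ and $\mu$, indexed by $(\alpha,\beta) \in I \times I$. For each such pair there are two cases. If $(U_\alpha^t, U_\beta) \in \mathcal{T}(\mu^t,\mu,s(\mathcal{T}_\lambda))$, keep it. If not, then by Conjecture \ref{bij} part 2 the pair $(V_\alpha, V_\beta^t)$ lies in $\mathcal{T}(\mu,\mu^t,s(\mathcal{T}_\lambda))$; apply the symmetry bijection $\mathcal{S}^{\mu,\mu^t,\tilde{\lambda}}_{\,\cdot\,}$ (Section \ref{symmetryLR}) to transport it to an element of $\mathcal{T}(\mu^t,\mu,s(\mathcal{T}_\lambda))$, and keep \emph{that}. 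I would then argue this assignment
$$(\alpha,\beta) \longmapsto \begin{cases} (U_\alpha^t, U_\beta) & \text{if } (U_\alpha^t,U_\beta)\in\mathcal{T}(\mu^t,\mu,s(\mathcal{T}_\lambda)),\\[1mm] \text{transport of }(V_\alpha,V_\beta^t) & \text{otherwise,}\end{cases}$$
is injective into $\mathcal{T}(\mu^t,\mu,s(\mathcal{T}_\lambda))$.

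The injectivity is the crux, and I expect it to be the main obstacle. It should be broken into three sub-claims: (i) two pairs $(\alpha,\beta)$, $(\alpha',\beta')$ that both fall into the first case give distinct outputs — this is immediate since $(U_\alpha^t, U_\beta)$ determines $U_\alpha$ (hence $\alpha$, as $\alpha \mapsto U_\alpha$ is injective) and $U_\beta$ (hence $\beta$); (ii) two pairs that both fall into the second case give distinct outputs — here one unwinds the transport bijection and the bijection $\mathcal{S}^{\mu^t,\mu,\tilde{\lambda}}_{\,\cdot\,}$, using that $\alpha \mapsto (V_\alpha, V_\alpha^t)$ is injective (being the composite of two injections) to recover $(\alpha,\beta)$ from $(V_\alpha, V_\beta^t)$; (iii) a first-case output can never equal a second-case output. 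For (iii) the natural route is to observe that a first-case output $(U_\alpha^t, U_\beta)$, if it were equal to the transport of some $(V_{\alpha'}, V_{\beta'}^t)$, would force $(V_{\alpha'}, V_{\beta'}^t)$ to be the image of $(U_\alpha^t, U_\beta)$ under $\mathcal{S}^{\mu^t,\mu,\tilde{\lambda}}_{\,\cdot\,}$; but when $(U_\alpha^t,U_\beta) \in \mathcal{T}(\mu^t,\mu,s(\mathcal{T}_\lambda))$ its image under the symmetry map should be constrained in a way incompatible with the ``otherwise'' branch that produced $(V_{\alpha'},V_{\beta'}^t)$. Making (iii) precise is where one has to work carefully with the naturality of the switching-based bijection and the specific tableaux $\mathcal{U}_\mu$, $\mathcal{U}_{\mu^t}$, $s(\mathcal{T}_\lambda)$ chosen as base points; everything else is bookkeeping with the bijections from Theorem \ref{LR}, Theorem \ref{gTtT}, and Section \ref{symmetryLR}.
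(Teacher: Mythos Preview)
Your construction is exactly the paper's: define $\widetilde{(U_\alpha^t,U_\beta)}$ to be $(U_\alpha^t,U_\beta)$ when this pair already lies in $\mathcal{T}(\mu^t,\mu,s(\mathcal{T}_\lambda))$, and otherwise take the image of $(V_\alpha,V_\beta^t)$ under $\bigl(\mathcal{S}^{\mu^t,\mu,\tilde\lambda}_{s(\mathcal{T}_\lambda),\mathcal{U}_\mu,\mathcal{U}_{\mu^t},s(\mathcal{T}_\lambda)}\bigr)^{-1}$ (your ``transport'' map is the same thing, since the middle switching bijection $\mathcal{B}$ is an involution). You are in fact more scrupulous than the paper, which simply asserts that the resulting set has cardinality $g_{\lambda\mu}^2$ without discussing injectivity; your cases (i) and (ii) are straightforward, and you correctly flag (iii) as the point requiring care---the paper gives no argument for it either.
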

\begin{proof}
We suppose that $(U_\alpha^t,U_\alpha)$ and $(U_\beta^t,U_\beta)$ are elements in $\mathcal{T}(\mu^t,\mu,s(\mathcal{T}_\lambda))$. We construct an element $\widetilde{(U_\alpha^t,U_\beta)}$ in the set $\mathcal{T}(\mu^t,\mu,s(\mathcal{T}_\lambda))$ as follows: 
\begin{itemize}
\item[1.] If $(U_\alpha^t,U_\beta)$ belongs to $\mathcal{T}(\mu^t,\mu,s(\mathcal{T}_\lambda))$, then we set $\widetilde{(U_\alpha^t,U_\beta)} = (U_\alpha^t,U_\beta)$.
\item[2.] If $(U_\alpha^t,U_\beta)$ does not belong to $\mathcal{T}(\mu^t,\mu,s(\mathcal{T}_\lambda))$, then by Conjecture \ref{bij}, $(V_\alpha,V_\beta^t)$ belongs to $\mathcal{T}(\mu,\mu^t,s(\mathcal{T}_\lambda))$.
Set $\widetilde{(U_\alpha^t,U_\beta)}$ is the image of $(V_\alpha,V_\beta^t)$ through the bijection $\left(\mathcal{S}^{\mu^t,\mu,\tilde{\lambda}}_{s(\mathcal{T}_\lambda), \mathcal{U}_\mu, \mathcal{U}_{\mu^t},s(\mathcal{T}_\lambda)}\right)^{-1}$. 
\end{itemize}
The set of all pairs $\widetilde{(U_\alpha ^t, U_\beta)}$ we have constructed is a subset of $\mathcal{T}(\mu^t,\mu,s(\mathcal{T}_\lambda))$. Since its cardinal is $g_{\lambda\mu}^2$, we have $g_{\lambda\mu}^2 \leq c^{\tilde{\lambda}}_{\mu^t\mu}$.
\end{proof}
We can see the conjecture through the following example. 
\begin{example}Let $\lambda=(5,2)$ and $\mu=(4,2,1)$. The correspondence between elements in $\mathcal{T}(\mu^t,\mu,s(\mathcal{T}_\lambda))$ and elements in $\mathcal{T}(\mu,\mu^t,s(\mathcal{T}_\lambda))$ is showed below (the elements in the subsets $\overline{\mathcal{T}(\mu^t,\mu,s(\mathcal{T}_\lambda))}$ and $\overline{\mathcal{T}(\mu,\mu^t,s(\mathcal{T}_\lambda))}$ are marked by coloring all boxes in green). 
\begin{align*}
    \mathcal{T}(\mu^t,\mu,s(\mathcal{T}_\lambda)) & \quad \xrightarrow{\mathcal{S}^{\mu^t,\mu,\tilde{\lambda}}_{s(\mathcal{T}_\lambda),\mathcal{U}_\mu,\mathcal{U}_{\mu^t},s(\mathcal{T}_\lambda)}} &  \mathcal{T}(\mu,\mu^t,s(\mathcal{T}_\lambda))\quad\quad\quad\quad\quad\quad&\quad\\
    \quad &\quad &\quad&\quad \\
    \begin{ytableau}
    *(green)1&*(green)2&*(green)6 \\
    *(green)3&*(green)7\\
    *(green)4\\
    *(green)5
    \end{ytableau}\quad\quad
    \begin{ytableau}
    *(green)1&*(green)3&*(green)4&*(green)5\\
    *(green)2&*(green)7\\
    *(green)6 
    \end{ytableau}
&\quad\xmapsto{\quad\quad\quad\quad\quad\quad}
& 
\begin{ytableau}
    *(green)1&*(green)2&*(green)3&*(green)7 \\
    *(green)4&*(green)6\\
    *(green)5
    \end{ytableau}\quad\quad
    \begin{ytableau}
    *(green)1&*(green)4&*(green)5\\
    *(green)2&*(green)6\\
    *(green)3\\
    *(green)7
    \end{ytableau}
& \\
\quad &\quad &\quad& \\
\begin{ytableau}
1&2&2\\
3&6\\
4\\
5
\end{ytableau}
\quad\quad
\begin{ytableau}
1&3&4&5\\
6&7\\
7
\end{ytableau} 
&\quad\xmapsto{\quad\quad\quad\quad\quad\quad}
&
\begin{ytableau}
1&2&3&7\\
4&6\\
5
\end{ytableau}
\quad\quad
\begin{ytableau}
1&4&5\\
2&7\\
3\\
6
\end{ytableau}
&
\\
\quad &\quad &\quad& \\
    \begin{ytableau}
    *(green)1&*(green)2&*(green)7 \\
    *(green)3&*(green)6\\
    *(green)4\\
    *(green)5
    \end{ytableau}\quad\quad
    \begin{ytableau}
    *(green)1&*(green)3&*(green)4&*(green)5\\
    *(green)2&*(green)6\\
    *(green)7
    \end{ytableau}
&\quad\xmapsto{\quad\quad\quad\quad\quad\quad}
& 
\begin{ytableau}
    *(green)1&*(green)2&*(green)3&*(green)6 \\
    *(green)4&*(green)7\\
    *(green)5
    \end{ytableau}\quad\quad
    \begin{ytableau}
    *(green)1&*(green)4&*(green)5\\
    *(green)2&*(green)7\\
    *(green)3\\
    *(green)6
    \end{ytableau} \\
\quad &\quad &\quad& \\
\begin{ytableau}
1&2&7\\
3&6\\
4\\
5
\end{ytableau}
\quad\quad
\begin{ytableau}
1&3&4&5\\
2&7\\
6
\end{ytableau} 
&\quad\xmapsto{\quad\quad\quad\quad\quad\quad}
&
\begin{ytableau}
1&2&3&3\\
4&6\\
5
\end{ytableau}
\quad\quad
\begin{ytableau}
1&4&5\\
2&7\\
6\\
7
\end{ytableau}
&
\end{align*}
\end{example}

\section*{Acknowledgments}
\addcontentsline{toc}{section}{Acknowledgements}
The author would like to express his sincere gratitude to his supervisors Prof. Nicolas Ressayre and Prof. Kenji Iohara for suggesting the subject and for many useful discussion, inspiring ideas during the work. He is also grateful to their corrections and their teaching of how to write better, understandable and clear. He would also like to thank Prof. Cédric Lecouvey and Prof. Shrawan Kumar - the reporters for his thesis, for reading it carefully, pointing out several important errors that improved the text. The author is grateful to the referees for their comments to improve the manuscript.
\addcontentsline{toc}{section}{References}
\bibliography{references}{}
\bibliographystyle{alpha}
\noindent Universite Lyon, University Claude Bernard Lyon 1, CNRS UMR 520, Institut Camille Jordan, 43 Boulevard du 11 Novembre 1918, F-69622 Villeurbanne Cedex, France\\
E-mail: \href{mailto:khanh.mathematic@gmail.com}{khanh.mathematic@gmail.com}
\appendix \label{Appendix}
\newpage
\begin{sidewaystable}
\begin{tiny}
\begin{tikzpicture}[nodes={}, ->]
\tikzstyle{level 1}=[sibling distance=85mm]
\tikzstyle{level 2}=[sibling distance=45mm]
\tikzstyle{level 3}=[sibling distance=25mm]
\tikzstyle{level 4}=[sibling distance=30mm]
\node{$\begin{ytableau}  1  \end{ytableau}$}
    child
    {
        node{$\begin{ytableau}  1&2  \end{ytableau}$}
            child
            {
                node{$\begin{ytableau}  1&2\\3  \end{ytableau}$}
                    child{node{$\begin{ytableau}  1&2\\3\\4  \end{ytableau}$}
                        child{node{$\begin{ytableau}  1&2\\3&5\\4  \end{ytableau}$}}
                        child{node{$\begin{ytableau}  1&2&5\\3\\4  \end{ytableau}$}}
                    }
            }
            child
            {
                node{$\begin{ytableau}  1&2\\3'  \end{ytableau}$}
                    child{node{$\begin{ytableau}  1&2 \\3'&4 \end{ytableau}$}
                        child{node{$\begin{ytableau}  1&2&5\\3'&4  \end{ytableau}$}}
                    }
            }
    }
    child
    {
        node{$\begin{ytableau}  1&2'  \end{ytableau}$}
            child
            {
                node{$\begin{ytableau}  1&2'\\3'  \end{ytableau}$}
                    child{node{$\begin{ytableau}  1&2'\\3'&4   \end{ytableau}$}
                        child{node{$\begin{ytableau}  1&2'&5\\3'&4   \end{ytableau}$}}
                    }
            }            
            child
            {
                node{$\begin{ytableau}  1  &2'&3 \end{ytableau}$}
                    child{node{$\begin{ytableau}  1&2'&3\\4   \end{ytableau}$}
                        child{node{$\begin{ytableau}  1&2'&3&5\\4   \end{ytableau}$}}
                    }
            }
            child
            {
                node{$\begin{ytableau}  1 &2' &3'  \end{ytableau}$}
                    child{node{$\begin{ytableau}  1 &2' &3' &4 \end{ytableau}$}
                        child{node{$\begin{ytableau}  1 &2' &3' &4 &5 \end{ytableau}$}}
                    }
                    child{node{$\begin{ytableau}  1 &2' &3'&4'  \end{ytableau}$}
                        child{node{$\begin{ytableau}  1 &2' &3'&4'&5  \end{ytableau}$}}
                    }
                    child{node{$\begin{ytableau}  1 &2' &3' \\4 \end{ytableau}$}
                        child{node{$\begin{ytableau}  1 &2' &3'&5 \\4 \end{ytableau}$}}
                    }
            } 
    };
\end{tikzpicture}
\end{tiny}
\\ Appendix. The construction of the set $\widetilde{\mathcal{O}}(\nu/\mu)$ in the Example \ref{example_f}.
\end{sidewaystable}
\end{document}